\documentclass[final]{siamart250211}
\usepackage{amssymb,amsfonts}
\usepackage{bm}
\usepackage{algorithmicx}
\usepackage{algpseudocode}
\usepackage{enumitem}
\usepackage{textcomp}

\hypersetup{hypertexnames=false}

\numberwithin{equation}{section}

\numberwithin{theorem}{section}
\newsiamremark{remark}{Remark}

\newcommand{\be}{\begin{equation}}
\newcommand{\ee}{\end{equation}}
\newcommand{\ba}{\begin{aligned}}
\newcommand{\ea}{\end{aligned}}

\renewcommand{\Re}{\operatorname{Re}}
\renewcommand{\Im}{\operatorname{Im}}
\newcommand{\x}{\bm{x}}
\newcommand{\bk}{\bm{k}}
\newcommand{\thetastar}{\theta_{\star}}

\newcommand{\cstar}{c_{\star}}
\newcommand{\nexp}{M}

\makeatletter
\providecommand*{\theHALG@line}{}
\renewcommand*{\theHALG@line}{\thealgorithm.\arabic{ALG@line}}
\makeatother

\begin{document}

\title{A fast sum-of-Gaussians algorithm for the high-dimensional fractional Fokker--Planck equation}

\author{Shidong Jiang\thanks{Center for Computational Mathematics, Flatiron Institute, Simons Foundation, New York, NY 10010, USA (\email{sjiang@flatironinstitute.org}).}
\and Dong Wang\thanks{School of Science and Engineering, The Chinese University of Hong Kong, Shenzhen, Shenzhen, Guangdong 518172, P.~R.~China; Shenzhen International Center for Industrial and Applied Mathematics, Shenzhen Research Institute of Big Data, Shenzhen, Guangdong 518172, P.~R.~China (\email{wangdong@cuhk.edu.cn}).}
\and Qi Zhou\thanks{School of Mathematical Sciences, Shanghai Jiao Tong University, Shanghai 200240, P.~R.~China (\email{zhouqi1729@sjtu.edu.cn}).}}

\headers{Fast SOG algorithm for the high-dimensional FFPE}{S. Jiang, D. Wang, and Q. Zhou}

\maketitle

\begin{abstract}
We present a fast, high-order algorithm for the free-space fractional
Fokker--Planck equation (FFPE) in arbitrary spatial dimension. Its fundamental
solution, corresponding to a Dirac-delta initial condition, is obtained from the
explicit Fourier representation by applying a sum-of-Gaussians (SOG)
approximation to the nonseparable stretched exponential, using its complete
monotonicity as the Laplace transform of a one-sided $\alpha$-stable density.
Each Gaussian term is an ordinary heat kernel and therefore factorizes across
spatial coordinates. On a tensor-product grid, the separated form can be
assembled in $O(\nexp dN)$ work and storage, rather than forming all $O(N^d)$
grid values, where $\nexp$ is the number of Gaussian terms and $N$ is the
number of points per dimension. We prove an a~priori error estimate for the
pure-fractional fundamental solution and give a parameter-selection procedure
for prescribed accuracy over specified ranges of space and time. In numerical
experiments the method achieves more than ten digits of relative accuracy, with
$\nexp$ growing only logarithmically in the inverse tolerance, and maintains
this accuracy in dimensions up to $d=10^{5}$. This exceeds the dimensions
reached in comparable radial-quadrature tests, where the integrand
becomes increasingly oscillatory as the dimension grows. Because the method represents the
fundamental solution as a separated sum of heat kernels, any initial datum
given as a finite sum of tensor products can be evolved in closed form using
only one-dimensional convolutions. This yields a computable class of
high-dimensional solutions that is amenable to error analysis, and tensor neural
networks provide one possible way to construct such separated representations
for more general data.
\end{abstract}

\begin{keywords}
high-dimensional problems, sum-of-Gaussians approximation, Fokker--Planck equation, sparse grids, tensor neural networks, fast algorithms
\end{keywords}

\begin{MSCcodes}
35Q84, 34K37, 65D40, 68W25, 68W40
\end{MSCcodes}

\section{Introduction}
\label{sec::intro}

The Fokker--Planck equation (FPE) provides a deterministic description of the
time evolution of probability density functions for stochastic systems
\cite{chandrasekhar1943stochastic, risken1989fokker}, with applications in
statistical mechanics, stochastic processes, mathematical finance, information
theory, and machine learning
\cite{barato2015thermodynamic,black1973pricing,bressloff2014stochastic,ito2013information,mandt2017stochastic}.
In the classical regime, the underlying dynamics are typically driven by
Gaussian white noise, leading to the well-known linear growth of mean-squared
displacement, i.e., $\langle |x|^2 \rangle \propto t$
\cite{einstein1905theory}. However, many anomalous-transport models require
non-Gaussian jump statistics with algebraic tails and a self-similar length scale
that differs from the Brownian scale. In the spatially fractional model
considered here, the Fourier symbol $|\bm{k}|^{2\alpha}$ with $0<\alpha<1$
corresponds to a symmetric stable process of index $2\alpha$: the characteristic
length grows like $t^{1/(2\alpha)}$, while moments of order $q\ge 2\alpha$ are
infinite. Such heavy-tailed anomalous diffusion is modeled by the fractional
Fokker--Planck equation (FFPE), where the classical Laplacian is replaced by a
fractional Laplacian operator $(-\Delta)^\alpha$ \cite{metzler2000random},
defined for $\bm{x}\in \mathbb{R}^{d}$ through the Cauchy principal value
\begin{equation}
    \label{eq::fractional_operator}
    (-\Delta)^{\alpha}p(\bm{x})=\frac{2^{2\alpha}\Gamma(\alpha+d/2)}{\pi^{d/2}|\Gamma(-\alpha)|}\text{P.V.}\int_{\mathbb{R}^{d}}\frac{p(\bm{x})-p(\bm{y})}{|\bm{x}-\bm{y}|^{d+2\alpha}}\mathrm{d}\bm{y},
\end{equation}
where $\Gamma(z)$ denotes the Gamma function.

The fractional Laplacian $(-\Delta)^\alpha$ makes the equation nonlocal and
captures long-range jumps associated with L\'{e}vy stable processes
\cite{delia2020numerical,DuGunzburgerLehoucqZhou2012,DuGunzburgerLehoucqZhou2013,TianDu2013}. We consider the following initial value problem for
the high-dimensional FFPE:
\begin{equation}
\label{eq::FFPE_formula}
\begin{cases}
\frac{\partial }{\partial t}p(\bm{x},t) = -\bm{b} \cdot \nabla p(\bm{x},t) + D_{o}\Delta p(\bm{x},t) - D_{f}(-\Delta)^{\alpha}p(\bm{x},t), \\ 
p(\bm{x},0) = \delta(\bm{x}-\bm{x}_{0}), \quad \bm{x} \in \mathbb{R}^{d},
\end{cases}
\end{equation}
where $\bm{b} \in \mathbb{R}^d$ is the drift vector, and $D_o \ge 0$ and
$D_f > 0$ are the ordinary and fractional diffusion coefficients. The solution of
\cref{eq::FFPE_formula} is the \emph{fundamental solution} (Green's function) of
the FFPE; by linearity it determines, through convolution, the solution for a
general initial datum. We therefore take the Dirac-delta case as the basic
building block and use the same representation for separated initial data in
high dimension.

Numerical treatment of Eq.~\eqref{eq::FFPE_formula} in high dimensions poses
substantial mathematical and computational challenges. First, traditional
grid-based methods, such as finite difference or finite element schemes, suffer
from the curse of dimensionality: the degrees of freedom grow exponentially with
the dimension $d$ \cite{duo2018novel, han2018solving}. Second, the nonlocality of
$(-\Delta)^\alpha$ leads to dense discretized operators, which are costly for
large-scale problems. Monte Carlo sampling and deep-learning-based solvers
\cite{hu2025score, liu2022neural} avoid full grids, but the Dirac-delta initial
condition and slow convergence can still be limiting factors
\cite{han2018solving}. Recent methods based on functional hierarchical tensors
\cite{tang2024solving} and fundamental-solution integrals \cite{ye2026fast}
improve this situation. In particular, Ye et al.~\cite{ye2026fast} reduce the
free-space FFPE with Dirac-delta initial data to a one-dimensional radial
integral that is evaluated to high precision in low to moderate dimensions. That
approach relies on radial quadrature: as $d$ increases, the Bessel order
$(d-2)/2$ and the power $r^{d/2}$ in the integrand both grow, making the
integrand more oscillatory and increasing its dynamic range; the method is
demonstrated up to $d=29$. This motivates the separable representation developed
here, which is aimed at higher dimensions and at separated initial data. The two
approaches are complementary: the radial integral remains effective in low to
moderate dimension, whereas the present method is designed for high-dimensional
separated representations.

To address these challenges, we develop a fast algorithm based on a
sum-of-Gaussians (SOG) approximation of the fundamental solution. The fractional
operator enters the Fourier-space solution only through the stretched-exponential
factor $\exp(-D_ft|\bm{k}|^{2\alpha})$, the one piece that does not factorize
across coordinates. Because this factor is completely monotone
\cite{penson2010exact}, it is the Laplace transform of a one-sided
$\alpha$-stable density, and a trapezoidal discretization of that representation
turns it into a sum of Gaussians -- each an ordinary heat kernel that factorizes
across dimensions. The FFPE thus reduces to a short separated sum of decoupled
heat solutions. On a tensor-product grid, its one-dimensional factors are
assembled in $O(\nexp dN)$ work and storage rather than forming $O(N^d)$ dense
grid values, where $\nexp$ is the number of Gaussians and $N$ the number of
points per dimension.

This construction is accurate, admits a rigorous error analysis, and extends naturally to low-rank summation of separated initial data. We give a
rigorous a~priori error analysis for the pure-fractional kernel that fixes the
quadrature step and truncation for any prescribed tolerance, with convergence
governed by a complex-plane bound on the stable density. For prescribed physical
windows, the same scaled formulation gives a domain-adapted parameter choice for
all $\alpha\in(0,1)$ and includes the ordinary-diffusion case $D_o>0$; when
$\alpha=1/2$, a closed-form identity for the trapezoidal error provides an
additional analytic reference. In numerical
experiments the method attains more than ten digits of relative accuracy with $\nexp$
growing only logarithmically in the inverse tolerance, and sustains this accuracy
up to $d=10^{5}$, well beyond the dimensions reachable by
radial-quadrature methods. Because the relative
error obeys a self-similar scaling, a single approximation sized at the smallest
time serves an entire space-time window and avoids the small-time,
high-dimensional loss of accuracy observed for direct quadrature
\cite{ye2026fast}. Finally, since the method approximates the fundamental
solution by a separated sum of Gaussians, any initial datum written as a
sum of tensor products evolves in closed form through one-dimensional convolutions
alone -- a class of high-dimensional functions that is computable and amenable to
error analysis
\cite{beylkin2002numerical,beylkin2005algorithms,hackbusch2012tensor}.
Sparse-grid approximation methods \cite{ShenYu2010,ShenYu2012} and tensor
neural networks
\cite{wang2024tensor,wang2024posteriori,wang2024multieigenpairs}
provide complementary ways to construct reduced representations for more general
data.

Gaussian-sum approximations also arise in other high-dimensional PDE contexts.
For example, in time-independent many-electron Schr\"odinger eigenvalue problems,
mixed-derivative regularity \cite{Yserentant2004} provides analytic support for
sparse-grid approximations, and sparse-grid methods have been developed for the
Schr\"odinger equation \cite{GriebelHamaekers2007}. The pairwise Coulomb kernel
$1/|\bm{r}_i-\bm{r}_j|$, which is nonseparable in the electronic coordinates,
admits accurate SOG approximations; after expansion into Gaussian factors, it is
compatible with tensor-product integration and tensor neural network
representations \cite{Wu2026Spectral,ZhouWuLiuSunXieXu2025}. For high-dimensional evolution
problems, the same separability mechanism is relevant whenever a Fourier-space
propagator, or a linear subproblem arising from time discretization, admits an
accurate Gaussian-sum representation. For many nonlinear evolution equations, an
unconditionally energy-stable scalar auxiliary variable (SAV) temporal
discretization reduces each time step to a linear problem with known source terms
\cite{ShenXuYang2018}. If the resulting linear subproblem has a
constant-coefficient solution operator that admits an accurate and efficient
Gaussian-sum representation, the framework developed here extends naturally to
such problems. These connections motivate SOG approximations as building blocks
for separable representations in high-dimensional PDEs, although the analysis
below is restricted to the FFPE fundamental solution.

The remainder of this paper is organized as follows. In \cref{sec::prelim}, we review the mathematical preliminaries, including Fourier transforms, the theory of completely monotone functions, and properties of the stretched exponential function. In \cref{sec::method}, we detail the SOG algorithm and provide its rigorous error estimate. Numerical experiments assessing the performance of the proposed solver are presented in \cref{sec::Num}, followed by concluding remarks in \cref{sec::conclusion}.

\section{Preliminaries}
\label{sec::prelim}
\subsection{Fourier transform}
For a function $f\in L^{1}(\mathbb{R}^{d})\cap L^{2}(\mathbb{R}^{d})$ we define its \emph{Fourier transform} by
\be
\widehat{f}(\bk)=\mathcal{F}[f](\bk)=\int_{\mathbb{R}^{d}} f(\x)\,e^{-i\,\bk\cdot\x}\,\mathrm{d}\x,
\qquad \bk\in\mathbb{R}^{d},
\ee
and the \emph{inverse Fourier transform} by 
\be
f(\x)=\mathcal{F}^{-1}[\widehat{f}](\x)=\frac{1}{(2\pi)^d}\int_{\mathbb{R}^{d}} \widehat{f}(\bk)\,e^{i\,\bk\cdot\x}\,\mathrm{d}\bk,
\qquad \x\in\mathbb{R}^{d}.
\ee
If $f$ is \emph{radial}, i.e.\ $f(\x)=f(r)$ with $r=\sqrt{x_{1}^{2}+\dots+x_{d}^{2}}$, its Fourier transform is again radial. In this case, the \emph{radial Fourier transform} (i.e., the Hankel transform) pairs are
\be
\ba
\widehat{f}(k)&=\frac{(2\pi)^{d/2}}{k^{(d-2)/2}}
        \int_{0}^{\infty} f(r)\,r^{\frac{d}{2}} 
        J_{\frac{d-2}{2}}(kr)\,\mathrm{d}r,\\
f(r) &=\frac{1}{(2\pi)^{d/2} r^{(d-2)/2}}
        \int_{0}^{\infty} \widehat{f}(k)\,k^{\frac{d}{2}} 
        J_{\frac{d-2}{2}}(kr)\,\mathrm{d}k,       
\ea
\ee
where $J_{\nu}$ is the Bessel function of the first kind of order $\nu$.  
Finally, for a sufficiently regular and rapidly decaying $f$ the \emph{Poisson summation formula}~\cite{stein2011fourier} links sums over the integer lattice to sums over its dual:
\be
h\sum_{n=-\infty}^{\infty} f(hn + a) = \sum_{m=-\infty}^{\infty} e^{i \frac{2\pi m a}{h}} \widehat{f}\left(\frac{2\pi m}{h}\right),
\label{eq:poissonsf}
\ee
providing a powerful bridge between spatial and frequency-domain information.

\subsection{Completely monotone functions}
\begin{definition}[Completely monotone function]
A function \( f: (0, \infty) \to \mathbb{R} \) is \textbf{completely
monotone} if \(f\in C^\infty\) and
\[
(-1)^n f^{(n)}(x)\ge0
\]
for all nonnegative integers \(n\) and all \(x\in(0,\infty)\).
\end{definition} 

The following result, Bernstein's theorem, provides a crucial integral representation that is often used as an alternative definition
(see, for example, \cite{powell1981}).

\begin{lemma}[Bernstein's theorem]
A function \( f: (0, \infty) \to \mathbb{R} \) is completely monotone if and
only if it is the Laplace transform of a nonnegative Borel measure \( \mu \) on
\([0,\infty)\):
\[
f(x)=\int_0^\infty e^{-xt}\,\mathrm{d}\mu(t).
\]
If the measure has a density \( \rho(t)\ge0 \), this representation becomes
\[
f(x)=\int_0^\infty e^{-xt}\rho(t)\,\mathrm{d}t.
\]
\end{lemma}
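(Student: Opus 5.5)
The easy direction comes first. Suppose $f(x)=\int_0^\infty e^{-xt}\,\mathrm{d}\mu(t)$ with $\mu\ge0$ and $f$ finite on $(0,\infty)$. For fixed $x_0>0$ and any $n$, we have $t^n e^{-xt}\le C_{n,x_0}e^{-x_0 t/2}$ for $x\ge x_0$. Since $\int e^{-x_0t/2}\,\mathrm{d}\mu<\infty$, dominated convergence lets us differentiate under the integral. This gives $f\in C^\infty$ and
\[
(-1)^n f^{(n)}(x)=\int_0^\infty t^n e^{-xt}\,\mathrm{d}\mu(t)\ge0 .
\]
The density case is just the special case $\mathrm{d}\mu=\rho\,\mathrm{d}t$.

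For the converse I would follow the classical Bernstein--Widder route via Taylor's formula with integral remainder. Fix $x>0$ and $a>x$, and expand $f$ about $a$ evaluated at $x$. All terms $(x-a)^k f^{(k)}(a)/k!=(a-x)^k(-1)^kf^{(k)}(a)/k!$ are nonnegative, and so is the remainder. First I would show $f(\infty):=\lim_{x\to\infty}f(x)\ge0$ exists, since $f$ is nonincreasing and nonnegative; this limit accounts for an atom of $\mu$ at $t=0$. Replacing $f$ by $f-f(\infty)$, the Taylor remainder tends to $0$ as $a\to\infty$, and we obtain
\[
f(x)=\int_x^\infty \frac{(s-x)^{n-1}}{(n-1)!}(-1)^n f^{(n)}(s)\,\mathrm{d}s .
\]
Along the way one must check that $s^{k}f^{(k)}(s)\to0$ as $s\to\infty$; this follows from monotonicity of $(-1)^kf^{(k)}$ combined with integrability, by a standard induction.

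Next I substitute $s=n/t$ and define the positive measures
\[
\mathrm{d}\mu_n(t)=\frac{(-1)^nf^{(n)}(n/t)}{(n-1)!}\Big(\frac{n}{t}\Big)^{n+1}\frac{\mathrm{d}t}{t}\quad\text{on }(0,\infty).
\]
This rewrites the identity as
\[
f(x)=\int_0^{n/x}\Big(1-\frac{xt}{n}\Big)^{n-1}\mathrm{d}\mu_n(t).
\]
Taking $x\to0^+$ is not allowed in general. Instead, I bound the masses $\mu_n([0,T])$ uniformly by evaluating at a fixed $x$: for $t\le T$ and $n$ large, $(1-xt/n)^{n-1}\ge c_T>0$. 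Helly's selection theorem then gives a vaguely convergent subsequence $\mu_{n_j}\to\mu$.

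The main obstacle is passing to the limit. I need $(1-xt/n)^{n-1}\mathbf 1_{t<n/x}\to e^{-xt}$ uniformly on compacts, and in addition control of the tails $t>T$ uniformly in $n$. I would handle the tails by evaluating the identity at $x/2$: this gives $\int e^{-xt/2}\,\mathrm{d}\mu_n\lesssim f(x/2)$, which dominates the tail at $x$ by a factor $e^{-xT/2}$ (using $(1-u/n)^{n-1}\le C e^{-u}$ on $[0,n)$). Letting $T\to\infty$ then yields $f(x)=\int_0^\infty e^{-xt}\,\mathrm{d}\mu(t)$, with the constant $f(\infty)$ restored as $f(\infty)\delta_0$. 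Finally, the density statement is immediate whenever $\mu$ is absolutely continuous.
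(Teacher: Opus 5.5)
The paper does not prove this lemma; it quotes Bernstein's theorem as a classical result with a literature reference, so your argument has to stand on its own. Your route is the standard one: Widder's proof via Taylor's formula with integral remainder, the substitution $s=n/t$, and Helly selection. The easy direction, the normalization $f(\infty)=0$, the integral identity and the local mass bounds are all fine.

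The step that fails as written is the one you flag as the crux: the inequality $\int_0^\infty e^{-xt/2}\,\mathrm{d}\mu_n(t)\lesssim f(x/2)$ does not follow from the identity at $x/2$. The kernel $(1-xt/(2n))_+^{n-1}$ vanishes for $t\ge 2n/x$, so that identity says nothing about $\mu_n$ there. Even on $(0,n/x)$ the kernel is not bounded below by a multiple of $e^{-xt/2}$ uniformly in $n$: at $t=n/x$ the ratio is $2(\sqrt{e}/2)^{n}\to0$. In fact $\mu_n$ need not have any exponential moment. For $f(x)=\int_0^\infty e^{-x\tau+\tau^{0.9}}\,\mathrm{d}\tau$ one checks that $\int_0^\infty e^{-yt}\,\mathrm{d}\mu_n(t)=\infty$ for every $y>0$ and every $n$.

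Using the identity at $x/2$ is the right idea, but you must compare the two truncated kernels directly rather than passing through $e^{-xt}$. For $0<t<n/x$ set $v=xt/(2n)\in(0,\tfrac12)$. Since $(1-2v)/(1-v)\le 1-v\le e^{-v}$,
\[
\Bigl(1-\frac{xt}{n}\Bigr)^{n-1}
\le e^{-(n-1)v}\Bigl(1-\frac{xt}{2n}\Bigr)^{n-1}
\le e^{-xt/4}\Bigl(1-\frac{xt}{2n}\Bigr)^{n-1}\qquad(n\ge2).
\]
Hence $\int_{(T,\infty)}(1-xt/n)_+^{n-1}\,\mathrm{d}\mu_n(t)\le e^{-xT/4}f(x/2)$ uniformly in $n\ge2$. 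With this tail bound the limit passage goes through: uniform convergence of the kernels on compacts, a continuous cutoff near $T$, vague convergence, then $T\to\infty$.

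There are two smaller slips.
\begin{itemize}
\item With $s=n/t$ one has $s^{n-1}\,\mathrm{d}s=(n/t)^{n}\,\mathrm{d}t/t$, so the density of $\mu_n$ should carry $(n/t)^{n}$, not $(n/t)^{n+1}$. As written, your identity is off by the factor $n/t$. For $f(x)=1/x$ the correct $\mu_n$ is Lebesgue measure, while your formula gives $n\,\mathrm{d}t/t$, and the right-hand side then diverges.
\item It is the Taylor polynomial terms at $a$, not the remainder, that vanish as $a\to\infty$. With $f(\infty)=0$, these terms and the remainder are nonnegative and add up to $f(x)$. Taking $x=a/2$ therefore gives $(a/2)^k|f^{(k)}(a)|\le k!\,f(a/2)\to0$ at once, so the induction you mention is not needed.
\end{itemize}
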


\subsection{Properties of the stretched exponential and its inverse Laplace transform}
The function 
\be
f(x)=e^{-x^\alpha}, \qquad 0<\alpha<1, 
\label{eq:kwwfun}
\ee
also known as the stretched exponential or Kohlrausch--Williams--Watts (KWW) function, possesses several important properties.

\begin{lemma}
The stretched exponential function is completely monotone on $(0,\infty)$ and has the integral representation
\be
e^{-x^{\alpha}} = \int_0^{\infty} e^{-xt}\rho_\alpha(t)\,\mathrm{d}t,
\label{eq:kwwfunrep}
\ee
    where $\rho_\alpha$ is the probability density function (PDF) of a standard one-sided stable distribution (also called the one-sided L\'{e}vy $\alpha$-stable distribution). 
\end{lemma}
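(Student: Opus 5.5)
The plan is to prove complete monotonicity of $f(x)=e^{-x^\alpha}$ directly and then invoke Bernstein's theorem from the previous subsection to get the representation \eqref{eq:kwwfunrep}. After that, I will check that the representing measure is a probability measure with a density, and identify it with the one-sided stable law.

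\emph{Step 1 (complete monotonicity).} I will use two closure facts. First, if $g$ is completely monotone and $h\ge 0$ has a completely monotone derivative (a Bernstein function), then $g\circ h$ is completely monotone. Second, products of completely monotone functions are completely monotone, which follows from the Leibniz rule. Take $g(y)=e^{-y}$, which is clearly completely monotone, and $h(x)=x^\alpha$. Then $h'(x)=\alpha x^{\alpha-1}$ is completely monotone for $0<\alpha<1$, since each derivative carries the sign $(-1)^n$ because the factors $(\alpha-1),(\alpha-2),\dots$ are negative. To prove the composition fact, write $(g\circ h)'=g'(h)\,h'$, so $-(g\circ h)'=(-g')(h)\cdot h'$. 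This is a product of $h'$ (completely monotone) and $(-g')\circ h$, where $-g'$ is again completely monotone. Inducting on the order of differentiation then gives $(-1)^n (g\circ h)^{(n)}\ge 0$ for all $n$. The induction bookkeeping, arranging the hypothesis to cover all completely monotone $g$ at once, is the only delicate point, and it is standard.

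\emph{Step 2 (Bernstein).} By Bernstein's theorem there is a nonnegative Borel measure $\mu$ on $[0,\infty)$ with $e^{-x^\alpha}=\int_0^\infty e^{-xt}\,\mathrm{d}\mu(t)$. Letting $x\to0^+$ and using monotone convergence gives $\mu([0,\infty))=1$, so $\mu$ is a probability measure. Letting $x\to\infty$, the left side tends to $0$ while the right side tends to $\mu(\{0\})$, so $\mu$ has no atom at $0$.

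\emph{Step 3 (density and identification).} This is the main obstacle: showing that $\mu$ has a density $\rho_\alpha$ and that $\rho_\alpha$ is the one-sided stable density. For the identification, note that $\mu^{*n}$ has Laplace transform $e^{-n x^\alpha}=e^{-(n^{1/\alpha}x)^\alpha}$, so $\mu^{*n}$ is $\mu$ dilated by $n^{1/\alpha}$. This is exactly the defining strict stability property, with Laplace exponent $x^\alpha$ and support in $[0,\infty)$, i.e.\ the standard one-sided $\alpha$-stable law. For absolute continuity, the Laplace transform extends analytically to $\Re x>0$, and on the imaginary axis the characteristic function $\phi(s)=e^{-(-is)^\alpha}$ has modulus $e^{-|s|^\alpha\cos(\pi\alpha/2)}$, which is integrable since $\cos(\pi\alpha/2)>0$. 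Fourier inversion then yields a continuous bounded density $\rho_\alpha\ge0$, given explicitly by the Bromwich integral $\rho_\alpha(t)=\frac{1}{2\pi i}\int_{c-i\infty}^{c+i\infty}e^{zt-z^\alpha}\,\mathrm{d}z$. This integral form is also the one the paper's later complex-plane bounds will use.
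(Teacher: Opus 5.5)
Your argument is correct, and it is more self-contained than the paper's. The paper's proof only remarks that complete monotonicity follows from a direct computation of $f^{(n)}(x)$, and cites \cite{penson2010exact} for \eqref{eq:kwwfunrep}.

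\textbf{Complete monotonicity.} Your closure lemma (a completely monotone function composed with a Bernstein function, proved by inducting over all completely monotone $g$ at once via the Leibniz rule) is an organized form of that direct computation. The computation would show inductively that $f^{(n)}(x)=(-1)^n e^{-x^\alpha}\sum_k a_{n,k}x^{k\alpha-n}$ with $a_{n,k}\ge0$, the signs being preserved because $k\alpha-n<0$. Your version avoids this coefficient bookkeeping.

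\textbf{The representation.} Here you add real content. The paper's Bernstein lemma only provides a measure and states the density form conditionally, and the paper never checks that the representing measure is a probability measure with a density. Your Steps~2--3 close exactly this gap:
\begin{itemize}
\item mass one, from $x\to0^+$;
\item no atom at $0$, from $x\to\infty$;
\item strict stability, since $\mu^{*n}$ is a dilate of $\mu$;
\item absolute continuity, from the integrability of $e^{-|s|^\alpha\cos(\pi\alpha/2)}$.
\end{itemize}
The Bromwich formula you obtain is also the starting point \eqref{eq::ILT} of Appendix~\ref{sec::rho_estimate}.

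\textbf{Two points to make explicit.} First, the identity on the imaginary axis comes from analytic continuation to $\Re z>0$ together with continuity up to $\Re z=0$. Second, Fourier inversion gives the Bromwich integral on the line $c=0$ directly, so shifting to $c>0$ needs a short Cauchy-theorem justification.

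\textbf{What the paper's route buys.} By leaning on the literature, the paper gets quantitative information about $\rho_\alpha$ that its later error analysis uses: the large-$t$ series and small-$t$ asymptotics in Lemma~\ref{lemma::rho_alpha}, and closed forms for rational $\alpha$. Your existence argument does not supply these.
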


\begin{proof}
The fact that the stretched exponential function
is completely monotone can be shown via direct calculation of $f^{(n)}(x)$. The integral representation~\eqref{eq:kwwfunrep} can be 
found, say, in \cite{penson2010exact}.
\end{proof}
The following properties of $\rho_{\alpha}$
can be found in \cite{zolotarev1986}.
\begin{lemma}
\label{lemma::rho_alpha}
\begin{enumerate}
\item For $t>0$, $\rho_\alpha$ admits an integral
representation
\begin{equation}
    \label{eq::Levy}
    \rho_\alpha(t)=\frac{1}{\pi}\int_{0}^{+\infty}e^{-\cos(\alpha\pi)u^{\alpha}}e^{-ut}\sin(\sin(\alpha\pi)u^{\alpha})\mathrm{d}u.
\end{equation}
\item For $t > 1$, $\rho_\alpha$ admits the following series expansion
\be 
\rho_\alpha(t) = \frac{1}{\pi} \sum_{n=1}^{\infty} \frac{(-1)^{n-1}}{n!} \sin(\pi n \alpha) \Gamma(n\alpha+1) t^{-(n\alpha+1)}\le C_\alpha t^{-(1+\alpha)},
\label{eq:seriesexpansion}
\ee
where $C_\alpha$ is a positive constant depending on $\alpha$ (bounded on compact
subintervals of $(0,1)$).
\item As $t \to 0^+$, $\rho_\alpha$ has the 
asymptotic expansion:
\be 
\rho_\alpha(t) = C t^{\frac{\alpha-2}{2(1-\alpha)}} \exp\left(-D t^{-\frac{\alpha}{1-\alpha}}\right)\left( \sum_{k=0}^{\infty} a_k t^{\frac{k\alpha}{1-\alpha}} \right)
\label{eq:asympexpansion}
\ee 
where the constants $C$ and $D$ are positive and depend on $\alpha$: 
\be 
C = \frac{1}{\sqrt{2\pi (1-\alpha)}} \alpha^{\frac{1}{2(1-\alpha)}}, \qquad D = (1-\alpha) \alpha^{\frac{\alpha}{1-\alpha}},
\ee
and the first two coefficients $a_k$, $k=0,1$ are given by:
\be
a_0 = 1, \qquad 
a_1 = \frac{(2-\alpha)(1-2\alpha)}{24\alpha(1-\alpha)} \alpha^{-\frac{\alpha}{1-\alpha}}.
\ee
Moreover, the estimate
\begin{equation}
    \rho_\alpha(t)\le A_\alpha t^{-\gamma}\exp(-D t^{-\frac{\alpha}{1-\alpha}})
\end{equation}
holds with a constant $A_\alpha$ that has only an $O(1/\sqrt{1-\alpha})$ singularity as $\alpha\rightarrow 1^{-}$, where $\gamma=(2-\alpha)/(2-2\alpha)$.
\end{enumerate}    
\end{lemma}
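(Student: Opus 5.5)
The plan is to start from the Laplace-transform identity \eqref{eq:kwwfunrep} and recover $\rho_\alpha$ by inverting it. Since $e^{-x^\alpha}$ extends analytically to $\Re x>0$, Bromwich inversion gives
\[
\rho_\alpha(t)=\frac{1}{2\pi i}\int_{c-i\infty}^{c+i\infty} e^{st-s^\alpha}\,\mathrm{d}s .
\]
For item 1 we deform the contour onto the two banks of the cut along the negative real axis. On these banks $s=ue^{\pm i\pi}$, so $s^\alpha=u^\alpha e^{\pm i\alpha\pi}$. The arcs at infinity do not contribute: by Jordan's lemma this holds because $t>0$ and $\Re(s^\alpha)\ge0$ on the relevant sector. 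The small circle around the origin contributes nothing because the integrand is bounded there. Subtracting the two bank contributions leaves $\frac1\pi\int_0^\infty e^{-ut}\,\Im\bigl(e^{-u^\alpha e^{-i\alpha\pi}}\bigr)\mathrm{d}u$, which is \eqref{eq::Levy}.

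For item 2 we expand $e^{-\cos(\alpha\pi)u^\alpha}\sin(\sin(\alpha\pi)u^\alpha)=\Im\exp(-u^\alpha e^{-i\alpha\pi})$ in powers of $u^\alpha$ and integrate term by term against $e^{-ut}$. This uses $\int_0^\infty u^{n\alpha}e^{-ut}\mathrm{d}u=\Gamma(n\alpha+1)t^{-n\alpha-1}$ and $\Im(-e^{-i\alpha\pi})^n=(-1)^{n-1}\sin(n\alpha\pi)$. The interchange is justified by dominated convergence: the majorant $\sum_n u^{n\alpha}/n!=e^{u^\alpha}$ has a finite Laplace integral. Alternatively, one can rotate the contour slightly to make the series absolutely convergent, because $\Gamma(n\alpha+1)/n!$ decays. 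The bound $\rho_\alpha\le C_\alpha t^{-1-\alpha}$ for $t>1$ then follows by bounding the tail $\sum_{n\ge2}$ by $t^{-2\alpha-1}$ times the convergent series evaluated at $t=1$. That constant is continuous in $\alpha$, hence bounded on compacts.

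For item 3 we apply the saddle-point method to the Bromwich integral as $t\to0^+$. The phase $\phi(s)=st-s^\alpha$ has its saddle where $t=\alpha s^{\alpha-1}$, i.e. $s_0=(\alpha/t)^{1/(1-\alpha)}$. Direct evaluation gives $\phi(s_0)=-(1-\alpha)\alpha^{\alpha/(1-\alpha)}t^{-\alpha/(1-\alpha)}=-Dt^{-\alpha/(1-\alpha)}$. The second derivative is $\phi''(s_0)=\alpha(1-\alpha)s_0^{\alpha-2}$. The Gaussian integral along the vertical steepest-descent direction then yields the prefactor
\[
\frac{1}{\sqrt{2\pi\phi''(s_0)}}=C\,t^{(\alpha-2)/(2(1-\alpha))}.
\]
Higher-order Laplace-method corrections, in powers of $1/(s_0^\alpha)\propto t^{\alpha/(1-\alpha)}$, give the series in \eqref{eq:asympexpansion}. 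The coefficient $a_1$ comes from the standard formula involving $\phi'''$ and $\phi''''$.

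The main obstacle is the uniform bound $\rho_\alpha\le A_\alpha t^{-\gamma}e^{-Dt^{-\alpha/(1-\alpha)}}$, valid for all $t$ and not just asymptotically, with the stated control $A_\alpha=O((1-\alpha)^{-1/2})$. We would try first to substitute $s=s_0 z$, which scales the integral to
\[
s_0\,\frac{1}{2\pi i}\int e^{\lambda(\alpha z-z^\alpha)/\alpha}\,\mathrm{d}z, \qquad \lambda=s_0^\alpha\alpha .
\]
We then integrate over the exact steepest-descent path through $z=1$, along which the integrand is real and positive and bounded by $e^{-\lambda(1-\alpha)/\alpha}$ times a Gaussian in arclength. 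This gives a non-asymptotic bound with the factor $\lambda^{-1/2}$. After that, $\lambda^{-1/2}\propto t^{\alpha/(2(1-\alpha))}$ combines with the prefactor $s_0\propto t^{-1/(1-\alpha)}$ to give $t^{-\gamma}$. The curvature constant is $1/\sqrt{1-\alpha}$, which gives the claimed singularity.

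If this is delicate for large $t$, we would instead combine two bounds. For $t\le1$ we use the saddle-point estimate. For $t>1$ we use item 2, since there $t^{-1-\alpha}\le A t^{-\gamma}e^{-D t^{-\alpha/(1-\alpha)}}$ holds because $\gamma\ge1$ grows, the exponential is bounded below, and $C_\alpha$ adjusts the constant. Near $\alpha\to1$, however, $\gamma\to\infty$ while $1+\alpha\to2$, so this route needs care. The cleanest approach is to keep the global steepest-descent argument.
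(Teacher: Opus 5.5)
The paper gives no proof of this lemma; it only cites Zolotarev's monograph. Your derivation is therefore a genuinely different, self-contained route, and most of it is correct:
\begin{itemize}
\item the collapse of the Bromwich integral onto the two banks of the negative real axis for item~1;
\item the termwise Laplace integration for item~2 (the majorant $e^{u^\alpha}$ is integrable against $e^{-ut}$ for every $t>0$);
\item the saddle computation of $s_0$, $D$, $C$ and $\gamma$ in item~3.
\end{itemize}
One minor slip in item~1: $\Re(s^\alpha)\ge0$ fails near the negative axis when $\alpha>1/2$. The arcs still vanish, because $e^{-s^\alpha}$ decays near $\arg s=\pm\pi/2$ and $e^{st}$ decays exponentially on the rest of the left half-plane, beating $e^{|s|^\alpha}$.

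\textbf{The uniform bound.} The genuine gap is the ``Moreover'' estimate. You try to prove it for all $t>0$, but for $\alpha>1/2$ it is false. Item~2 gives $\rho_\alpha(t)\sim\frac{\alpha}{\Gamma(1-\alpha)}t^{-1-\alpha}$ as $t\to\infty$, while $\gamma-(1+\alpha)=\frac{\alpha(2\alpha-1)}{2(1-\alpha)}>0$, so no constant $A_\alpha$ works. Your fallback comparison for $t>1$ runs in the wrong direction for exactly this reason.

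The step that breaks is the claim that, along the steepest-descent path, the integrand is $e^{-\Lambda(1-\alpha)}$ times a Gaussian in arclength with curvature $\alpha(1-\alpha)$, where $\Lambda=s_0^\alpha$. Far from $z=1$ the term $\alpha z$ dominates, so $\Re(\alpha z-z^\alpha)$ decreases only linearly in arclength. In addition, as $\alpha\to1$ one has $\alpha z-z^\alpha-(\alpha-1)\approx(1-\alpha)(z\log z-z+1)$. The quadratic regime therefore has width $O(1)$ while the whole phase is only $O(1-\alpha)$.

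A correct near/far split gives the Gaussian term $s_0e^{-\Lambda(1-\alpha)}(\Lambda\alpha(1-\alpha))^{-1/2}$, and hence $A_\alpha=O((1-\alpha)^{-1/2})$. This holds only where $\Lambda(1-\alpha)=Dt^{-\alpha/(1-\alpha)}$ is bounded below (for $\alpha$ near $1$), i.e.\ for $t$ roughly below the mode of $\rho_\alpha$. For any fixed $t_0<1$ this covers $(0,t_0]$ once $\alpha$ is close to $1$, which is all that $\mathcal{E}_{\mathrm{down}}$ uses.

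Even $(0,1]$ is too much. In $\rho_\alpha(1)=\frac1\pi\sum_n\frac{\Gamma(n\alpha+1)}{n!}\sin(\pi n(1-\alpha))$, all terms with $n<1/(1-\alpha)$ are positive. Summing them gives $\rho_\alpha(1)\gtrsim\frac{1}{(1-\alpha)\log^2(1/(1-\alpha))}$ as $\alpha\to1^-$. The right-hand side at $t=1$ is only $A_\alpha e^{-D}\approx A_\alpha$. You therefore need to state and prove the bound on such a restricted small-$t$ range.

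\textbf{The coefficient $a_1$.} You assert $a_1$ rather than compute it, and the computation does not give the stated value. With $g(z)=\alpha z-z^\alpha$, the derivatives at the saddle are $g''(1)=\alpha(1-\alpha)$, $g'''(1)=(\alpha-2)g''(1)$ and $g''''(1)=(\alpha-3)g'''(1)$. The first Laplace correction along the vertical direction is $\Lambda^{-1}\bigl(\frac{g''''}{8(g'')^2}-\frac{5(g''')^2}{24(g'')^3}\bigr)$. This gives $a_1=\frac{(\alpha-2)(1-2\alpha)}{24\alpha(1-\alpha)}\alpha^{-\alpha/(1-\alpha)}$, the \emph{negative} of the stated coefficient.

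The case $\alpha=1/3$ confirms the sign. There $\rho_{1/3}(t)=\frac{1}{3\pi}t^{-3/2}K_{1/3}\bigl(2/\sqrt{27t}\bigr)$, and $K_\nu(x)\sim\sqrt{\pi/(2x)}\,e^{-x}\bigl(1+\frac{4\nu^2-1}{8x}\bigr)$ gives $a_1=-5\sqrt3/48$. The stated formula gives $+5\sqrt3/48$. Your method thus establishes item~3 only with this sign corrected.
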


\section{A fast sum-of-Gaussians FFPE solver}
\label{sec::method}
In this section, we present a fast algorithm for solving the FFPE~\eqref{eq::FFPE_formula}. By combining the radial Fourier-integral representation of the FFPE solution with a sum-of-Gaussians (SOG) approximation of the stretched exponential, the proposed method reduces the anomalous-diffusion solution to a sum of heat-equation solutions with closed-form expressions, thereby achieving a computational cost that grows linearly with the dimension.

\subsection{SOE approximation of the stretched exponential function}
\label{sec::SOE_approx}
We approximate the stretched exponential by a sum of exponentials (SOE),
\be
e^{-x^{\alpha}} \approx \sum_{\ell=-M_1}^{M_2} w_\ell e^{-s_\ell x}, \qquad x\in [\delta,R].
\label{eq:soeappr}
\ee
Our starting point is the integral representation
\eqref{eq:kwwfunrep}. 
Applying the change of variables $t=e^{u}$ to
\cref{eq:kwwfunrep}, we obtain
\be
e^{-x^{\alpha}} = \int_{-\infty}^{\infty}
e^{-x e^u+u}\rho_\alpha(e^u)\mathrm{d}u.
\label{eq:kwwfunrep2}
\ee
The integrand decays rapidly to zero as $u\to\pm\infty$, so the trapezoidal rule
converges exponentially fast~\cite{trefethen2014sirev}, and the nodes
$s_\ell$ and weights $w_\ell$ in \eqref{eq:soeappr} are given by
\be
s_\ell = e^{h\ell}, \qquad w_\ell = h s_\ell \rho_\alpha(s_\ell),
\ee
where $h>0$ is the step size in the trapezoidal rule. 

We analyze the approximation error of \cref{eq:soeappr}. We record the asymptotic magnitude of the Gamma function for a complex argument (Lemma~\ref{lemma:Gamma}, used in the parameter selection of \cref{sec::params}); its proof is given in the appendix of \cite{DEShaw2020JCP}.

\begin{lemma}\label{lemma:Gamma}
For fixed $x\in \mathbb{R}$, the Gamma function satisfies
\begin{equation}\label{eq::Gamma}
\left|\Gamma(x+iy)\right|\simeq (2\pi)^{1/2}(x^2+y^2)^{\frac{2x-1}{4}}e^{-\frac{\pi}{2}|y|}
\end{equation}
as $|y|\rightarrow \infty$.
\end{lemma}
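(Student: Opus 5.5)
The natural route is Stirling's formula in its complex form, $\log\Gamma(z)=(z-\tfrac12)\log z - z + \tfrac12\log(2\pi) + O(1/|z|)$, valid uniformly as $|z|\to\infty$ in any sector $|\arg z|\le \pi-\epsilon$. For fixed real $x$ and $|y|\to\infty$, the point $z=x+iy$ eventually lies in such a sector: its argument tends to $\pm\pi/2$. The error term is then $O(1/|y|)$, so it contributes only a factor $1+o(1)$ to $|\Gamma(z)|$. Since $|\Gamma(\bar z)|=|\Gamma(z)|$ by the reflection principle, it is enough to treat $y\to+\infty$.

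The next step is to take real parts. Write $\log z=\tfrac12\log(x^2+y^2)+i\theta$ with $\theta=\arg z\in(0,\pi)$. Then
\[
\Re\bigl[(z-\tfrac12)\log z - z\bigr]=\tfrac{2x-1}{4}\log(x^2+y^2) - y\theta - x .
\]
Exponentiating gives
\[
|\Gamma(x+iy)| = (2\pi)^{1/2}(x^2+y^2)^{\frac{2x-1}{4}}e^{-y\theta - x}\bigl(1+o(1)\bigr).
\]

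The remaining task is the expansion of $y\theta$. We have $\theta=\pi/2-\arctan(x/y)=\pi/2 - x/y + O(y^{-3})$, so $y\theta=\tfrac{\pi}{2}y - x + O(y^{-2})$. The $-x$ from $y\theta$ cancels the $-x$ from $\Re(-z)$, leaving exactly $e^{-\frac{\pi}{2}|y|}$. This yields \eqref{eq::Gamma}, with the symbol $\simeq$ understood as the ratio of the two sides tending to $1$.

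The main obstacle is this cancellation of the $e^{-x}$ factors. If one simply replaced $\theta$ by $\pi/2$, the result would carry a spurious constant factor $e^{-x}$. One therefore has to keep the first-order correction of $\arg z$ explicitly. Everything else is the standard uniform validity of Stirling's formula away from the negative real axis, which I would cite rather than reprove.
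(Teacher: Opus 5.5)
Your proof is correct. The paper gives no argument of its own for this lemma and only defers to the appendix of a cited reference; your route is the standard Stirling-based derivation of this estimate. It uses the complex Stirling formula, uniform in a sector $|\arg z|\le\pi-\epsilon$, and then keeps the first-order correction $\arg z=\pi/2-x/y+O(y^{-3})$, so that the factor $e^{-x}$ from $\Re(-z)$ cancels exactly.
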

We first analyze the discretization error
of the (infinite) trapezoidal rule
\begin{equation}
    \label{eq::SOE} 
\begin{aligned}
    e^{-x^{\alpha}}&\approx \sum_{\ell=-\infty}^{+\infty}w_\ell e^{-s_\ell x}.
\end{aligned} 
\end{equation}

\begin{theorem}
    \label{thm::fractional_approx}
    Let $\alpha\in(0,1)$ and $0<\epsilon_{\emph{SOE}}\le 1$, and set $\theta=-(1-\alpha)\theta_{*}$ with $\theta_{*}=\arctan(1/9)$. If the step size $h$ satisfies 
    \begin{equation}
    \label{eq::h_select}
        h\le \frac{2\pi(1-\alpha)\theta_*}{\log(1+2I_\alpha/\epsilon_{\emph{SOE}})}
    \end{equation}
    where 
    \begin{equation}
        I_{\alpha}:=\int_{0}^{\infty}|\rho_\alpha(re^{i\theta})|\mathrm{d}r,
    \end{equation}
    then 
    \begin{equation}
        \label{eq::error}
        \left|e^{-x^{\alpha}}-\sum_{\ell=-\infty}^{+\infty}w_\ell e^{-s_\ell \cdot x}\right|\le \epsilon_{\emph{SOE}},
    \end{equation}
    holds for all $x\in [0,+\infty)$.
\end{theorem}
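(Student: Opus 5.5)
Following the remark before \eqref{eq:poissonsf}, I will apply the Poisson summation formula to the integrand of \eqref{eq:kwwfunrep2}. Set $g_x(u)=e^{-xe^u+u}\rho_\alpha(e^u)$. With $a=0$, \eqref{eq:poissonsf} gives
\[
\sum_{\ell} w_\ell e^{-s_\ell x}-e^{-x^\alpha}=h\sum_\ell g_x(h\ell)-\widehat{g_x}(0)=\sum_{m\ne0}\widehat{g_x}(2\pi m/h).
\]
So it suffices to bound $|\widehat{g_x}(\omega)|$ by something like $I_\alpha e^{-c|\omega|}$ uniformly in $x\ge0$, with $c=(1-\alpha)\theta_*$. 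Summing the geometric series then gives $2I_\alpha\sum_{m\ge1}e^{-2\pi c m/h}=2I_\alpha/(e^{2\pi c/h}-1)$. This is at most $\epsilon_{\rm SOE}$ exactly when $e^{2\pi c/h}\ge 1+2I_\alpha/\epsilon_{\rm SOE}$, which is condition \eqref{eq::h_select}.

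To get the decay of $\widehat{g_x}$ I will shift the contour in $u$. The function $g_x$ extends analytically to the strip $|\Im u|\le|\theta|$, because $\rho_\alpha$ is analytic in the sector $|\arg t|\le|\theta|$; this follows from representation \eqref{eq::Levy} after rotating the $u$-integration ray appropriately. For $\omega>0$ I shift $u\mapsto u+i\theta$ with $\theta=-(1-\alpha)\theta_*<0$, and for $\omega<0$ I use the conjugate shift. This gives
\[
\widehat{g_x}(\omega)=e^{\omega\theta}\int_{\mathbb R}e^{-i\omega u}\,e^{-xe^{u}e^{i\theta}}\,e^{u+i\theta}\rho_\alpha(e^{u}e^{i\theta})\,\mathrm{d}u .
\]
Since $|\theta|<\pi/2$, we have $\Re(xe^{u}e^{i\theta})=xe^u\cos\theta\ge0$, so $|e^{-xe^{u+i\theta}}|\le1$ for every $x\ge0$. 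That is the source of the uniformity in $x$. Substituting $r=e^u$, the remaining integral is bounded by $\int_0^\infty|\rho_\alpha(re^{i\theta})|\,dr=I_\alpha$. Hence $|\widehat{g_x}(\omega)|\le I_\alpha e^{-|\theta||\omega|}$, and $|\theta|=(1-\alpha)\theta_*$ matches the exponent in \eqref{eq::h_select}. The conjugate case follows from $\rho_\alpha(\bar t)=\overline{\rho_\alpha(t)}$.

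The main obstacle is justifying the contour shift. This needs two things. First, analyticity of $\rho_\alpha$ in the sector $|\arg t|\le(1-\alpha)\theta_*$. Second, vanishing of the contributions from the vertical segments at $\Re u=\pm L$ as $L\to\infty$, which also shows $I_\alpha<\infty$. For $r\to\infty$ I would use the series \eqref{eq:seriesexpansion}, which converges for complex $t$ with $|t|>1$ and gives $|\rho_\alpha(t)|\lesssim|t|^{-1-\alpha}$. Then $|e^{u}\rho_\alpha|\sim e^{-\alpha\Re u}\to0$.

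For $r\to0$ I would use the small-$t$ behaviour \eqref{eq:asympexpansion}, continued to complex $t$. This yields decay of $|\exp(-Dt^{-\alpha/(1-\alpha)})|$ provided $\Re(t^{-\alpha/(1-\alpha)})>0$, which requires $\frac{\alpha}{1-\alpha}|\arg t|<\pi/2$. The choice $|\arg t|\le(1-\alpha)\theta_*$ gives $\frac{\alpha}{1-\alpha}|\arg t|\le\alpha\theta_*<\pi/2$, so this holds comfortably.

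The small constant $\theta_*=\arctan(1/9)$ suggests the authors make this rigorous through an explicit bound on the rotated integral \eqref{eq::Levy}, rather than through asymptotics. If the asymptotic route proves hard to justify, I would do that instead: rotate the $u$-ray in \eqref{eq::Levy} by an angle depending on $\arg t$, and check that both exponential factors keep positive real part for angles up to $(1-\alpha)\theta_*$.
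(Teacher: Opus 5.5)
Your proposal is correct and follows the paper's proof essentially step for step. The paper applies Poisson summation to $f(u)=e^{u}\rho_\alpha(e^{u})e^{-xe^{u}}$ and rotates the $t$-contour to angle $-\operatorname{sign}(k)(1-\alpha)\theta_*$, which is your strip shift in $u$ under $t=e^{u}$. It then uses $|e^{-xre^{i\theta}}|\le 1$ to get $|\widehat f(k)|\le I_\alpha e^{-(1-\alpha)\theta_*|k|}$ uniformly in $x\ge 0$, the reflection $\rho_\alpha(\bar z)=\overline{\rho_\alpha(z)}$, and the same geometric sum.

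The main difference is that you justify the contour shift explicitly (analyticity in the sector and vanishing of the end contributions), which the paper simply asserts via Cauchy's theorem. One correction to your guess about $\theta_*$: the proof only needs $(1-\alpha)\theta_*<\pi(1-\alpha)/2$. The specific value $\theta_*=\arctan(1/9)$ comes from the saddle-contour analysis in the appendix that bounds $I_\alpha$, not from bounding the rotated representation \eqref{eq::Levy}.
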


\begin{proof}
Combining the Poisson summation formula~\eqref{eq:poissonsf} and the fact that $\widehat{f}(0) = \int_\mathbb{R} f(u)du$, we obtain
\begin{equation}
\label{eq::poisson}
\left|\int_{\mathbb{R}}f(u)\mathrm{d}u-h\sum_{n\in \mathbb{Z}}f(nh)\right|\le \sum_{m\neq 0}\left|\widehat{f}\left(\frac{2\pi m}{h}\right)\right|,
\end{equation}
We now apply the above inequality to the integral representation~\eqref{eq:kwwfunrep2} of the stretched exponential, i.e.,
$f(u)=e^{u}\rho_\alpha(e^{u})e^{-xe^{u}}$. The error bound on the right-hand side of \eqref{eq::poisson} is determined by the decay rate of $\widehat{f}$. We have
\begin{equation}
\label{eq::f_Fourier}
\begin{aligned}
\widehat{f}(k)&=\int_{-\infty}^{+\infty}e^{u}\rho_\alpha(e^{u})e^{-xe^{u}}e^{-iku}\mathrm{d}u\\
&=\int_{0}^{+\infty}e^{-xt}\rho_{\alpha}(t)t^{-ik}\mathrm{d}t.\\
\end{aligned}
\end{equation}
        Let us consider the asymptotic approximation of the integral in \cref{eq::f_Fourier}. We transform the integral path to $C_\theta=\{re^{i\theta},r\in[0,+\infty)\}$. By Cauchy's theorem, for a complex function $g(z)$, if the angular region between $C_0$ and $C_\theta$ contains no singularity, then
        \begin{equation}
            \int_{C_0}g(z)\mathrm{d}z=\int_{C_\theta}g(z)\mathrm{d}z.
        \end{equation}
        Since $|\theta|\le \pi(1-\alpha)/2$, one then derives that
        \begin{equation}
            \label{eq::Int_trans_calc}
            \begin{aligned}
\widehat{f}(k)
&=\int_{0}^{+\infty}e^{-xt}\rho_{\alpha}(t)t^{-ik}\mathrm{d}t,\\
&=\int_{0}^{+\infty}e^{-xre^{i\theta}}\rho_{\alpha}(re^{i\theta})r^{-ik}e^{\theta k}\mathrm{d}r\\
&=e^{\theta k}\int_{0}^{+\infty}e^{-xre^{i\theta}}\rho_{\alpha}(re^{i\theta})r^{-ik}\mathrm{d}r.
\end{aligned}
        \end{equation}
        We choose the rotation opposite to the sign of $k$, i.e. $\theta=-\operatorname{sign}(k)\,\theta_{*}(1-\alpha)$, which lies in the sector of analyticity since $\theta_{*}(1-\alpha)<\pi(1-\alpha)/2$. Because $x\ge 0$ and $\cos\theta>0$ give $|e^{-xre^{i\theta}}|=e^{-xr\cos\theta}\le 1$, and $|r^{-ik}|=1$, the rotated integral in \cref{eq::Int_trans_calc} is bounded in modulus by $\int_{0}^{\infty}|\rho_{\alpha}(re^{i\theta})|\mathrm{d}r$. Moreover, since $\rho_\alpha$ is real on the positive axis, the reflection $\rho_\alpha(\bar z)=\overline{\rho_\alpha(z)}$ shows that this integral takes the same value $I_\alpha$ for $+\theta$ and $-\theta$. With $e^{\theta k}=e^{-(1-\alpha)\theta_{*}|k|}$ for this sign choice, one deduces the convergence rate with respect to mode $k$ that
        \begin{equation}
            \label{eq::conv_rate_k}
            |\widehat{f}(k)|\le I_{\alpha} e^{-(1-\alpha)\theta_{*} |k|}.
        \end{equation}
        Substituting \cref{eq::conv_rate_k} into \cref{eq::poisson}, one has
        \begin{equation}
            \label{eq::approx_error}
            \begin{aligned}
                \left|\int_{\mathbb{R}}f(u)\mathrm{d}u-h\sum_{n\in \mathbb{Z}}f(nh)\right|&\le \sum_{m\neq 0} I_\alpha e^{-2\pi(1-\alpha)\theta_{*}|m|/h}\\
                &\le 2I_\alpha e^{-2\pi(1-\alpha)\theta_{*}/h}\frac{1}{1-e^{-2\pi(1-\alpha)\theta_{*}/h}}\\
                &\le \epsilon_{\text{SOE}}.
            \end{aligned}
        \end{equation}
 \end{proof}
Theorem~\ref{thm::fractional_approx} bounds the discretization error of the
\emph{infinite} rule uniformly on $[0,+\infty)$. Truncating the series to
$\ell\in[-M_1,M_2]$, as in \cref{eq:soeappr}, restricts the accurate range to
the finite window $[\delta,R]$, whose endpoints are governed by the extreme
retained nodes: since a single term $e^{-s_\ell x}$ acts on the scale $x\sim
s_\ell^{-1}$, the largest node $s_{M_2}=e^{hM_2}$ sets the lower limit
$\delta\sim s_{M_2}^{-1}$ and the smallest node $s_{-M_1}=e^{-hM_1}$ the upper
limit $R\sim s_{-M_1}^{-1}$. The truncation indices $M_1,M_2$ are chosen in
\cref{sec::Error} to meet the target tolerance.

Theorem \ref{thm::I_alpha_bound} provides a uniform bound of $I_\alpha$.
\begin{theorem}
    \label{thm::I_alpha_bound}
    For $\alpha\in(0,1)$ and
    $\theta=\pm(1-\alpha)\theta_{*}$, there is a universal constant $C_I$,
    independent of $\alpha$, such that
    \begin{equation}
        \label{eq::I_alpha_upper}
        I_\alpha=\int_0^\infty |\rho_\alpha(re^{i\theta})|\,\mathrm{d}r
        \le C_I\left(\frac{1}{\alpha}+\frac{1}{1-\alpha}\right).
    \end{equation}
    More precisely, if $I_\alpha^1$ and $I_\alpha^2$ denote the contributions
    from $r\in(0,1)$ and $r\in(1,\infty)$, then
    \[
    I_\alpha^1\le
    \begin{cases}
    C_I/\alpha, & 0<\alpha\le 1/2,\\[2mm]
    C_I\log\!\bigl(e/(1-\alpha)\bigr), & 1/2\le \alpha<1,
    \end{cases}
    \qquad
    I_\alpha^2\le C_I\left(\frac{1}{\alpha}+\frac{1}{1-\alpha}\right).
    \]
\end{theorem}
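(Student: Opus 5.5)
The plan is to bound $|\rho_\alpha(re^{i\theta})|$ separately on $r\in(0,1)$ and on $r\in(1,\infty)$. Both bounds will come from one contour-integral representation of the analytic continuation of $\rho_\alpha$ into the sector $|\arg z|<\pi(1-\alpha)/2$. By the reflection $\rho_\alpha(\bar z)=\overline{\rho_\alpha(z)}$ noted in the proof of Theorem~\ref{thm::fractional_approx}, it suffices to treat $\theta=-(1-\alpha)\theta_*$.

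\textbf{Continuation.} Starting from \eqref{eq::Levy}, write $\sin$ as a difference of exponentials:
\[
\rho_\alpha(z)=\frac{1}{2\pi i}\left(\int_0^\infty e^{-zu-u^\alpha e^{-i\pi\alpha}}\,\mathrm{d}u-\int_0^\infty e^{-zu-u^\alpha e^{i\pi\alpha}}\,\mathrm{d}u\right).
\]
In each integral I would rotate the $u$-contour, $u=se^{i\varphi_\pm}$, with $\varphi_\pm$ chosen so that two quantities have positive real part simultaneously: $\cos(\theta+\varphi_\pm)$, which multiplies $rs$, and $\cos(\alpha\varphi_\pm\mp\pi\alpha)$, which multiplies $s^\alpha$. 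Because $|\theta|=(1-\alpha)\theta_*$ with the small angle $\theta_*=\arctan(1/9)$, admissible angles exist. Both cosines are then bounded below by a positive multiple of $\min\{1,\ (1-\alpha)\}$; the loss as $\alpha\to1^-$ happens because $\pi\alpha$ approaches $\pi$. This explicit choice of angles, and the resulting lower bounds on the cosines, is what makes the constants uniform in $\alpha$.

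\textbf{Tail $r>1$.} On the rotated contour I expand $e^{-u^\alpha e^{\mp i\pi\alpha}}$ in powers of $u^\alpha$ and integrate term by term. This reproduces the series \eqref{eq:seriesexpansion} with $t$ replaced by $z$, and it gives $|\rho_\alpha(re^{i\theta})|\le C(1+1/(1-\alpha))\,r^{-1-\alpha}$ for $r\ge1$. The factor $1/(1-\alpha)$ comes from the reciprocal of the cosine lower bound. Integrating over $r>1$ yields $\int_1^\infty r^{-1-\alpha}\,\mathrm{d}r=1/\alpha$, so $I_\alpha^2\le C_I(1/\alpha+1/(1-\alpha))$. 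If tracking the combined constant turns out awkward, I would instead bound the rotated integral directly by $\int_0^\infty e^{-c\,rs}\bigl|e^{-s^\alpha e^{i\psi}}-1\bigr|\,\mathrm{d}s$, using $|e^{w}-1|\le|w|$ for $\Re w\le0$. This gives the same $r^{-1-\alpha}$ decay.

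\textbf{Near zero, $r<1$.} I would use the complex version of the small-$t$ asymptotics in Lemma~\ref{lemma::rho_alpha}, obtained by steepest descent on the same contour integral, in the form
\[
|\rho_\alpha(re^{i\theta})|\le A_\alpha\, r^{-\gamma}\exp\!\bigl(-D\cos(\alpha\theta_*)\, r^{-\beta}\bigr),\qquad \beta=\tfrac{\alpha}{1-\alpha}.
\]
The phase works out as follows: $\arg z^{-\beta}=-\beta\theta=\alpha\theta_*$, which is small, so $\Re z^{-\beta}\ge\cos(\theta_*)\,r^{-\beta}$. With $s=r^{-\beta}$ and the identity $(\gamma-1)/\beta=1/2$, the integral becomes
\[
\frac{1}{\beta}\int_1^\infty s^{-1/2}e^{-D's}\,\mathrm{d}s\le\frac{1-\alpha}{\alpha}\sqrt{\pi/D'},\qquad D'=D\cos\theta_*.
\]
I then check the two regimes.
\begin{itemize}
\item For $\alpha\le1/2$, $D$ is bounded below, which gives $I_\alpha^1\lesssim1/\alpha$.
\item For $\alpha\to1^-$, $D\sim(1-\alpha)/e$ and $A_\alpha=O((1-\alpha)^{-1/2})$. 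The product is then $O(1)$, which is within the stated $\log(e/(1-\alpha))$. The extra slack there can absorb the transition region where the saddle-point bound is not yet sharp, for example on $r\in(r_0,1)$, where I would simply use the series-based bound.
\end{itemize}

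\textbf{Main obstacle.} The hardest step is making the complex saddle-point bound rigorous and \emph{uniform} in $\alpha$, that is, showing that $A_\alpha$ keeps only its $O((1-\alpha)^{-1/2})$ singularity off the real axis. I would attempt this first by deforming the $u$-contour through the complex saddle $u_*=(\alpha/z)^{1/(1-\alpha)}$ along a ray, and then bounding the Gaussian-type integral explicitly with Laplace's method and a crude remainder estimate. If that proves delicate near $\alpha\to0$, I would fall back for $\alpha\le1/2$ on the direct rotated-integral bound from the tail step, which already yields an $O(1/\alpha)$ estimate on $(0,1)$.
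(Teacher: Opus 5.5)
\paragraph{Gap: the pointwise bound for $I_\alpha^1$ as $\alpha\to1^-$.}
Your estimate of $I_\alpha^1$ rests entirely on the pointwise bound $|\rho_\alpha(re^{i\theta})|\le A_\alpha r^{-\gamma}\exp(-D\cos(\alpha\theta_*)r^{-\beta})$ on all of $(0,1)$, with $A_\alpha=O((1-\alpha)^{-1/2})$. You do not prove this bound, and it is false near $r=1$.

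To see this, write $\alpha=1-\epsilon$ and use the series \eqref{eq:seriesexpansion} at $z=e^{i\theta}$. One has $(-1)^{n-1}\sin(\pi n\alpha)=\sin(\pi n\epsilon)$, so every term with $n<1/\epsilon$ has positive real part, since its phase is at most $\theta_*(1+\epsilon)$. For $n\le 1/(\epsilon\log\frac{1}{\epsilon})$ one has $\Gamma(n\alpha+1)/n!\ge e^{-1}$ and $\sin(\pi n\epsilon)\ge 2n\epsilon$. The terms with $n\ge 1/\epsilon$ sum to $O(1)$. Hence $|\rho_\alpha(e^{i\theta})|\gtrsim 1/(\epsilon\log^2\frac{1}{\epsilon})$, far above the $O(\epsilon^{-1/2})$ your bound gives at $r=1$. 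The same computation at $z=1$ shows that the small-$t$ bound of Lemma~\ref{lemma::rho_alpha} cannot be used uniformly up to $t=1$ either.

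A Laplace-type bound with that prefactor holds only in the saddle regime $Dr^{-\beta}\gtrsim 1$, i.e.\ $r\lesssim D^{(1-\alpha)/\alpha}\approx 1-\epsilon\log\frac{e}{\epsilon}$. Your patch (use the series on $(r_0,1)$) fails for any fixed $r_0$. The termwise bound $\sum_n\Gamma(n\alpha+1)r^{-n\alpha}/n!$ has largest term roughly $\exp(\epsilon r^{-\alpha/\epsilon}/e)$, which is astronomically large once $1-r\gg\epsilon\log\frac{1}{\epsilon}$.

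So the two regimes meet only on a window of width about $\epsilon\log\frac{1}{\epsilon}$ next to $r=1$, where the natural bounds are of order $1/\epsilon$. This window is not slack to be absorbed; it is exactly where a logarithm arises. At the matching point $Dr^{-\beta}=O(1)$ you are outside the asymptotic regime, so Laplace's method with a crude remainder cannot give a uniform constant. Dominating the lower-order phase terms by the quadratic one is precisely what produces the factor $c^{1/(1-\alpha)}$ in Corollary~\ref{coro::theta_max}. Also, a straight path through the complex saddle is not an admissible inversion contour when $\phi\neq0$, because $\Re(sz)$ grows linearly along one of its ends. That is why the paper uses the parabola $s=s_c(1+iu-\eta u^2)$.

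\paragraph{The missing ingredient.}
What you need is a global coercivity estimate for the saddle phase on the parabolic contour, uniform in $\alpha$: $-F_\alpha(u)\ge c_F(1-\alpha)(1+u^2)$. This is Lemma~\ref{lem::F_coercive_high_alpha}, built on the monotonicity result Theorem~\ref{thm:Fprime-negative}.

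With it, the paper integrates the majorant \eqref{eq::rho_bound} in $r$ first. Setting $\lambda=\sigma^\alpha$, so that $\sigma\,\mathrm{d}r=-(1-\alpha)\,\mathrm{d}\lambda$, Tonelli gives $I_\alpha^1\le\frac{1-\alpha}{2\pi}\int_{\mathbb{R}}(1+2\eta|u|)\,e^{\lambda_0F_\alpha(u)}/(-F_\alpha(u))\,\mathrm{d}u$. Coercivity then bounds this by $C\int_{\mathbb{R}}\frac{1+|u|}{1+u^2}e^{-c(1-\alpha)(1+u^2)}\,\mathrm{d}u\le C\log(e/(1-\alpha))$. The logarithm is the $E_1(c(1-\alpha))$ term, which is exactly the contribution of the regime $\lambda(1-\alpha)\lesssim1$ that your matching would have to handle. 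No pointwise prefactor $A_\alpha$ and no splicing with the series are needed.

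\paragraph{A smaller point on $I_\alpha^2$.}
Term-by-term integration on any admissible ray gives $\Gamma(n\alpha+1)z^{-n\alpha-1}$ exactly. So the factor $1/(1-\alpha)$ does not come from contour cosines; it comes from the coefficient sum $\sum_n\Gamma(n\alpha+1)/n!$, which you still must bound. The paper does this with $\Gamma(n\alpha+1)\le(n!)^\alpha$ and $n!\ge 2^{n-1}$, giving $\Gamma(\alpha+1)+1/(2^{1-\alpha}-1)$.

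Your fallback via $|e^w-1|\le|w|$ is too weak. Near $\alpha=1$, the condition $\Re w\le0$ forces $\cos(\theta+\varphi)=O(1-\alpha)$. You then get $(1-\alpha)^{-1-\alpha}r^{-1-\alpha}$, hence only $I_\alpha^2\lesssim\alpha^{-1}(1-\alpha)^{-2}$.
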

The detailed proof is provided in Appendices~\ref{sec::rho_estimate}
and~\ref{sec::int_bound}. The
estimate in \cref{eq::I_alpha_upper} shows only mild endpoint singularities. The
large factor that appears in a direct pointwise bound for
$|\rho_\alpha(z)|$ near $\alpha=1$ is not intrinsic to $I_\alpha$; in the proof
of \cref{sec::int_bound} the compact part is integrated in the radial variable
before the saddle-contour integral is estimated, which preserves the
cancellation in the phase. At $\alpha=1$ the representing measure degenerates to
a Dirac mass at $z=1$, while the limit $\alpha\to0^+$ is also degenerate and is
no longer described by a regular probability density on $(0,\infty)$.
Accordingly, endpoint regimes still require care in numerical density
evaluation, but the contour constant entering the trapezoidal error does not
grow like $c^{1/(1-\alpha)}$.

\subsection{The SOG approximation of the solution} Taking the Fourier transform of \cref{eq::FFPE_formula} with respect to the space variable $\bm{x}\in \mathbb{R}^d$, we obtain
\begin{equation}
    \label{eq::FFPE_Fourier}
    \left\{\begin{aligned}
&\frac{\partial}{\partial t}\widehat{p}(\boldsymbol{k}, t) =-i(\bm{k}\cdot\bm{b}) \widehat{p}(\boldsymbol{k}, t)-D_{\mathrm{o}}|\boldsymbol{k}|^2 \widehat{p}(\boldsymbol{k}, t)-D_{\mathrm{f}}|\boldsymbol{k}|^{2 \alpha} \widehat{p}(\boldsymbol{k}, t) \\
&\widehat{p}(\boldsymbol{k}, 0) =\exp \left(-i\bm{k}\cdot\bm{x}_0\right),
\end{aligned}\right.
\end{equation}
which can be explicitly solved with
\begin{equation}
    \label{eq::p_hat_explicit}
    \widehat{p}(\boldsymbol{k}, t)=e^{-i\bm{k}\cdot\bm{x}_0^t}e^{-(D_o|\bm{k}|^2+D_f|\bm{k}|^{2\alpha})t},\quad \bm{x}_0^t:=\bm{x}_0+\bm{b}t.
\end{equation}
By the inverse Fourier transform, we have
\begin{equation}
    \label{eq::p_intergral}
    p(\bm{x},t)=\frac{1}{(2\pi)^d}\int_{\mathbb{R}^d}e^{i\bm{k}\cdot(\bm{x}-\bm{x}_0^t)}e^{-(D_o|\bm{k}|^2+D_f|\bm{k}|^{2\alpha})t}\mathrm{d}\bm{k}.
\end{equation}
Since the factor $\exp(-(D_o|\bm{k}|^2+D_f|\bm{k}|^{2\alpha})t)$ is radially symmetric and hence invariant under coordinate rotations, we choose a new orthogonal coordinate frame $(\bm{e}_1,\cdots,\bm{e}_d)$ such that $\bm{x}-\bm{x}_0^{t}$ lies along the first coordinate axis, that is, $\bm{x}-\bm{x}_0^{t}=y\bm{e}_1$ with the direction vector $|\bm{e}_1|=1$. For $d\ge2$ and $y>0$, by performing a $d$-dimensional spherical coordinate transformation $(k_1,k_2,\cdots,k_d)\rightarrow (r,\theta_1,\theta_2,\cdots,\theta_{d-1})$, the integral in \cref{eq::p_intergral} is equivalent to
\begin{equation}
    \label{eq::p_int_equiv}
    p(\bm{x},t)=\frac{\int_0^\pi \sin ^{d-2}(\theta) \int_0^{\infty} r^{d-1} \cos (\cos (\theta) y r) e^{-(D_or^2+D_f r^{2\alpha})t} \mathrm{d}r \mathrm{d} \theta}{2^{d-1}\pi^{\frac{d+1}{2}}\Gamma(\frac{d-1}{2})}
\end{equation}
where $\Gamma(\cdot)$ denotes the Gamma function, and $\theta$ abbreviates the first angular coordinate $\theta_1$. Performing the angular integration for $y\neq0$, and evaluating \cref{eq::p_intergral} directly for $y=0$, gives the explicit radial integral expression~\cite{ye2026fast}
\begin{equation}
    \label{eq::int_expression}
    p(\bm{x},t)=
    \left\{
\begin{aligned}
    &\frac{1}{y^{\frac{d-2}{2}}}\int_0^{\infty}\left(\frac{r}{2\pi}\right)^{\frac{d}{2}}J_{\frac{d-2}{2}}(yr)\exp(-(D_or^2+D_fr^{2\alpha})t)\mathrm{d}r,\ y\neq 0\\
   &\frac{2^{1-d}}{\pi^{d/2}\Gamma\left(d/2\right)}\int_0^{\infty}r^{d-1}\exp(-(D_or^2+D_fr^{2\alpha})t)\mathrm{d}r,\ y=0,
\end{aligned}
\right.
\end{equation}
where $J_\nu(\cdot)$ represents the $\nu$-th Bessel function of the first kind
\begin{equation}
\label{eq::Bessel}
J_\nu(z)=\frac{(z/2)^{\nu}}{\pi^{1/2}\Gamma(\nu+1/2)}\int_0^{\pi}\sin^{2\nu}(\theta)\cos(\cos(\theta)z)\mathrm{d}\theta.
\end{equation}
For a fixed terminal time $T > 0$ and $t\in[0,T]$, we apply the SOE expansion~\eqref{eq:soeappr} to the fractional factor in $\widehat{p}(\bm{k},t)$, namely $e^{-D_ft|\bm{k}|^{2\alpha}}=e^{-x^{\alpha}}$ with $x=(D_ft)^{1/\alpha}|\bm{k}|^2$, giving the sum-of-Gaussians (SOG) approximation
\begin{equation}  \label{eq::SOG_approx}
    \widehat{p}(\boldsymbol{k}, t)\approx\widehat{p}_{\text{SOG}}(\bm{k},t)=\sum_{\ell=-M_1}^{M_2}w_\ell e^{-i\bm{k}\cdot\bm{x}_0^t}e^{-[D_ot+s_\ell(D_ft)^{1/\alpha}]|\bm{k}|^2}
\end{equation}
In real space, this procedure is equivalent to approximating the true solution $p(\bm{x}, T)$ through a linear superposition of ordinary diffusion kernels. Specifically, this approximation is formulated as
\begin{equation}
    \label{eq::p_approx}
    p(\bm{x},T)\approx p_{\text{SOG}}(\bm{x},T):=\sum_{\ell=-M_1}^{M_2}w_\ell p_\ell(\bm{x},T),
\end{equation}
where $p_\ell(\bm{x},T)$ is the solution to the following heat equation
\begin{equation}
    \label{eq::heat}
    \left\{
\begin{aligned}
    \frac{\partial}{\partial t}p_\ell(\bm{x},t)&=-\bm{b}\cdot\nabla p_\ell(\bm{x},t)+\left(D_{o}+\frac{s_\ell D_f}{\alpha} (D_ft)^{\frac{1}{\alpha}-1}\right)\Delta p_\ell(\bm{x},t)\\
    p_\ell(\bm{x},0)&=\delta_{\bm{x}_0}(\bm{x}).
\end{aligned}
\right.
\end{equation}
The exact solution of \cref{eq::heat} reads
\begin{equation}
    \label{eq::Heat_explicit}
    p_\ell(\bm{x},T)=\frac{1}{(4\pi C_\ell^{T})^{d/2}}\exp\left(-\frac{y^2}{4C_\ell^{T}}\right),
\end{equation}
where $C_\ell^{T}:=D_oT+s_\ell (D_fT)^{1/\alpha}$ is the $\ell$-th ordinary diffusion constant in the SOG approximation.

Therefore, on a tensor-product observation grid
$\bm{X}=\otimes_{j=1}^d X_j$, each kernel $p_\ell$ in
\cref{eq::Heat_explicit} is represented by its one-dimensional Gaussian
factors. Assembling the factors for one term costs
$O(\sum_{j=1}^d n_j)$ work and storage, and assembling all factors costs
$O(\nexp\sum_{j=1}^d n_j)$, or $O(\nexp dN)$ when $n_j=N$ for all $j$.
This is the cost of the separated representation; forming and storing all
dense values on the full tensor grid would still require $O(N^d)$ entries.

\subsection{Error estimate of the SOG approximation}
\label{sec::Error}

We now analyze the SOG approximation error of the FFPE solution given by \cref{eq::p_approx}. Denote
\begin{equation}
    \mathcal{E}_T=\|p(\bm{x},T)-p_{\text{SOG}}(\bm{x},T)\|_{\infty}
\end{equation}
as the $L_\infty$ error at the fixed terminal time $T$. By the inverse Fourier transform, $\mathcal{E}_T$ admits the simple bound
\begin{equation}
\label{eq::IFT_error}
\mathcal{E}_T\le \frac{1}{(2\pi)^d}\int_{\mathbb{R}^d}|\widehat{e}(\bm{k},T)|\mathrm{d}\bm{k},\quad \widehat{e}(\bm{k},T):=\widehat{p}(\bm{k},T)-\widehat{p}_{\text{SOG}}(\bm{k},T).
\end{equation}
Using the explicit expression in \cref{eq::p_hat_explicit} and the invariance of Gaussians under the Fourier transform, we obtain
\begin{equation}
\label{eq::IFT_error_explicit}
\mathcal{E}_T\le \frac{1}{(2\pi)^d}\int_{\mathbb{R}^d}\left|e^{-D_oT|\bm{k}|^2}\left(e^{-D_fT|\bm{k}|^{2\alpha}}-\sum_{\ell=-M_1}^{M_2}w_\ell e^{-s_\ell(D_fT)^{1/\alpha}|\bm{k}|^2}\right)\right|\mathrm{d}\bm{k}.
\end{equation}
In fact, the error $\mathcal{E}_T$ stems from the trapezoidal discretization of the kernel integral and the truncation of the infinite series at both ends; hence, we decompose it into three parts
\begin{equation}
    \label{eq::error_split}
    \mathcal{E}_T\le \mathcal{E}_\infty+\mathcal{E}_{\text{up}}+\mathcal{E}_{\text{down}},
\end{equation}
where
\begin{equation}
\begin{aligned}
    \mathcal{E}_\infty&=\frac{1}{(2\pi)^d}\int_{\mathbb{R}^d}\left|e^{-D_oT|\bm{k}|^2}\left(e^{-D_fT|\bm{k}|^{2\alpha}}-\sum_{\ell=-\infty}^{\infty}w_\ell e^{-s_\ell(D_fT)^{1/\alpha}|\bm{k}|^2}\right)\right|\mathrm{d}\bm{k},\\
    \mathcal{E}_{\text{up}}&=\frac{1}{(2\pi)^d}\int_{\mathbb{R}^d}\left|e^{-D_oT|\bm{k}|^2}\sum_{\ell=M_2+1}^{\infty}w_\ell e^{-s_\ell(D_fT)^{1/\alpha}|\bm{k}|^2}\right|\mathrm{d}\bm{k},\\
    \mathcal{E}_{\text{down}}&=\frac{1}{(2\pi)^d}\int_{\mathbb{R}^d}\left|e^{-D_oT|\bm{k}|^2}\sum_{\ell=-\infty}^{-M_1-1}w_\ell e^{-s_\ell(D_fT)^{1/\alpha}|\bm{k}|^2}\right|\mathrm{d}\bm{k}.\\
\end{aligned}
\end{equation}
Clearly, when the ordinary diffusion is absent, i.e., $D_o=0$, controlling the
error is most challenging because there is no additional Gaussian damping.
We therefore prove the conservative a~priori estimates in this worst case. When
$D_o>0$, the same scaled representation is used, with the extra parameter
$\lambda(t)$ retained over the prescribed time window in \cref{sec::params}.

For $\mathcal{E}_\infty$, since the integrand is radially symmetric, we switch to $d$-dimensional spherical coordinates and set $u=(D_fT)^{1/\alpha}|\bm{k}|^2$. Then the integral in \cref{eq::IFT_error_explicit} can be reduced to a one-dimensional integral with respect to $u$, namely,
\begin{equation}
    \label{IFT_1d}
    \mathcal{E}_{\infty}= \frac{\Omega_d}{2(2\pi)^d}(D_fT)^{-\frac{d}{2\alpha}}\int_{0}^{\infty}|\epsilon_\infty(u)|u^{\frac{d}{2}-1}\mathrm{d}u,
\end{equation}
where $\Omega_d=2\pi^{d/2}/\Gamma(d/2)$ denotes the surface area of the unit sphere in $\mathbb{R}^d$, and
\begin{equation}
    \epsilon_\infty(u):=e^{-u^{\alpha}}-\sum_{\ell=-\infty}^{+\infty}w_\ell e^{-s_\ell u}
\end{equation}
denotes the approximation error of the kernel function for $u\in [0,+\infty)$, which is bounded pointwise by $|\epsilon_\infty(u)|\le \epsilon_{\text{SOE}}$ under the parameter choice of Theorem~\ref{thm::fractional_approx}. This uniform bound is useful only on a finite interval; it cannot by itself be integrated over $[U,\infty)$ against the growing measure factor $u^{d/2-1}$. We therefore split the integral in \cref{IFT_1d} over $[0,U]$ and $[U,+\infty)$, and bound the tail directly. Let
\begin{equation}
\label{eq::tail_infty}
\begin{aligned}
\mathcal T_h(U)={}&\int_U^\infty e^{-u^\alpha}u^{\frac d2-1}\,\mathrm du\\
&+\sum_{\ell=-\infty}^{\infty}w_\ell
\int_U^\infty e^{-s_\ell u}u^{\frac d2-1}\,\mathrm du  \\
={}&\frac{1}{\alpha}\Gamma\!\left(\frac{d}{2\alpha},U^\alpha\right)
+\sum_{\ell=-\infty}^{\infty}w_\ell s_\ell^{-\frac d2}
\Gamma\!\left(\frac d2,s_\ell U\right),
\end{aligned}
\end{equation}
where $\Gamma(a,z)$ is the upper incomplete Gamma function. Then
\begin{equation}
    \label{eq::E_infty_estimate}
\mathcal{E}_{\infty}\le \frac{1}{2^d\pi^{d/2}\Gamma(d/2)}(D_fT)^{-\frac{d}{2\alpha}}\left[\frac{2\epsilon_{\text{SOE}}}{d}U^{\frac{d}{2}}+\mathcal T_h(U)\right].
\end{equation}

Next, we analyze the error bounds for $\mathcal{E}_{\text{up}}$ and $\mathcal{E}_{\text{down}}$. By applying the same change of variables as for $\mathcal{E}_\infty$, the exponential integrals can be evaluated in closed form. Indeed, let
\begin{equation}
\begin{aligned}
     I_\ell&:=w_\ell \int_{0}^{\infty}e^{-s_\ell u}u^{\frac{d}{2}-1}\mathrm{d}u=h\Gamma\left(d/2\right)s_\ell^{1-\frac{d}{2}}\rho_\alpha(s_\ell),
\end{aligned}
\end{equation}
and $\mathcal{E}_{\text{up}}$ is then estimated using property~2 of Lemma~\ref{lemma::rho_alpha}, which gives
\begin{equation}
    \label{eq::err_up}
    \begin{aligned}
        \mathcal{E}_{\text{up}}&=\frac{1}{2^d\pi^{d/2}\Gamma(d/2)}(D_fT)^{-\frac{d}{2\alpha}}\sum_{\ell=M_2+1}^{\infty}I_\ell\\
        &= \frac{1}{2^d\pi^{d/2}\Gamma(d/2)}(D_fT)^{-\frac{d}{2\alpha}}\sum_{\ell=M_2+1}^{\infty}h\Gamma\left(d/2\right)s_\ell^{1-\frac{d}{2}}\rho_\alpha(s_\ell)\\
        &\le \sum_{\ell=M_2+1}^{\infty}\frac{h}{2^d\pi^{d/2}}(D_fT)^{-\frac{d}{2\alpha}}s_\ell^{1-\frac{d}{2}}\cdot C_\alpha s_\ell^{-(1+\alpha)}\\
        &=\frac{hC_\alpha (D_fT)^{-\frac{d}{2\alpha}}}{2^d\pi^{d/2}(e^{h(\alpha+d/2)}-1)}e^{-h(\alpha+d/2)M_2}.
    \end{aligned}
\end{equation}
For $\mathcal{E}_{\text{down}}$ we invoke the asymptotic estimate of $\rho_\alpha(t)$ as $t\to 0^{+}$ (property~3 of Lemma~\ref{lemma::rho_alpha}):
\begin{equation}
    \label{eq::err_down}
     \begin{aligned}
     \mathcal{E}_{\text{down}}&=\frac{1}{2^d\pi^{d/2}\Gamma(d/2)}(D_fT)^{-\frac{d}{2\alpha}}\sum_{\ell=-\infty}^{-M_1-1}I_\ell\\
     &=\frac{1}{2^d\pi^{d/2}\Gamma(d/2)}(D_fT)^{-\frac{d}{2\alpha}}\sum_{\ell=-\infty}^{-M_1-1}h\Gamma\left(d/2\right)s_\ell^{1-\frac{d}{2}}\rho_\alpha(s_\ell)\\
     &\le \sum_{\ell=M_1+1}^{\infty}\frac{h (D_fT)^{-\frac{d}{2\alpha}}}{2^d\pi^{d/2}}\cdot A_\alpha e^{h\ell(\gamma+\frac{d}{2}-1)}\exp\big(-D e^{h\ell\frac{\alpha}{1-\alpha}}\big)\\
     &\le \frac{C_{0}(D_fT)^{-\frac{d}{2\alpha}}}{2^d\pi^{d/2}}\cdot hA_\alpha e^{hM_1(\gamma+\frac{d}{2}-1)}\exp\big(-D e^{hM_1\frac{\alpha}{1-\alpha}}\big),\\
     \end{aligned}
\end{equation}
Here $C_0$ denotes a tail-majorization constant after $M_1$ is chosen beyond
the maximizer of
$e^{h\ell(\gamma+d/2-1)}\exp\{-D e^{h\ell\alpha/(1-\alpha)}\}$; from that
point the summand is decreasing, and the remaining lower tail is bounded by a
fixed multiple of the displayed value at $M_1$.

Next, based on the error bounds in \cref{eq::E_infty_estimate,eq::err_up,eq::err_down}, we provide a rigorous parameter-selection strategy. Given a prescribed target tolerance $\epsilon$, split it into budgets
$\epsilon_\infty+\epsilon_{\mathrm{up}}+\epsilon_{\mathrm{down}}\le\epsilon$.
For the infinite-rule part, choose $U$ and $\epsilon_{\text{SOE}}$ so that
\begin{equation}
    \label{eq::para_U}
    \mathcal T_h(U)\le
    2^{d-1}\pi^{\frac d2}\Gamma(d/2)(D_fT)^{\frac{d}{2\alpha}}\epsilon_\infty .
\end{equation}
The SOE tolerance $\epsilon_{\text{SOE}}$ is chosen to satisfy
\begin{equation}
    \label{eq::para_SOE}
    \epsilon_{\text{SOE}}\le \frac{d\epsilon_\infty}{4}\cdot(D_fT)^{\frac{d}{2\alpha}}\cdot 2^d\pi^{\frac{d}{2}}\Gamma(d/2)\cdot U^{-\frac{d}{2}},
\end{equation}
which in turn fixes the trapezoidal step size $h$ via \cref{eq::h_select}. The tail condition~\eqref{eq::para_U} is then checked with this $h$; if necessary, $U$ and $\epsilon_{\text{SOE}}$ are adjusted iteratively. Finally, we determine the SOG truncation parameters $M_1$ and $M_2$. For the upper truncation $M_2$, the selection becomes
\begin{equation}
    \label{eq::para_M2}
    M_2\ge \frac{1}{h(\alpha+\frac{d}{2})}\log\left[\frac{hC_\alpha (D_fT)^{-\frac{d}{2\alpha}}}{2^d\pi^{\frac{d}{2}}\epsilon_{\mathrm{up}}(e^{h(\alpha+\frac{d}{2})}-1)}\right].
\end{equation}
For the lower truncation we take $M_1=\log X/h$, where $X$ satisfies
\begin{equation}
    \label{eq::para_M1}
    X^{\gamma+\frac{d}{2}-1}\exp\left(-DX^{\frac{\alpha}{1-\alpha}}\right)\le \frac{2^d\pi^{\frac{d}{2}}\epsilon_{\mathrm{down}}\cdot (D_fT)^{\frac{d}{2\alpha}}}{C_0 A_\alpha\cdot h},
\end{equation}
which provides the asymptotic bound
\begin{equation}
    \label{eq::para_M1_asymp}
    M_1\ge \frac{1}{h}\log\left[X_0+\frac{(1-\alpha)(\gamma+\frac{d}{2}-1)}{D\alpha}X_0^{\frac{1-2\alpha}{1-\alpha}}\log X_0\right]
\end{equation}
with
\begin{equation}
    \label{eq::X_0}
    X_0=\left[\frac{1}{D}\log\left(\frac{C_0A_\alpha h }{2^d\pi^{\frac{d}{2}}\epsilon_{\mathrm{down}}\cdot (D_fT)^{\frac{d}{2\alpha}}}\right)\right]^{\frac{1-\alpha}{\alpha}}.
\end{equation}

\begin{remark}[Self-similarity and physical time windows]
\label{rmk::selfsim}
For $D_o=0$, the Green's function has the self-similar form
$p(Y,T)=(D_fT)^{-d/2\alpha}Q_{\alpha,d}(\eta)$ with
$\eta=|Y|/(D_fT)^{1/(2\alpha)}$, the $\lambda=0$ special case of
\cref{eq::scaled_q_integral}. Thus, if the evaluation set is specified in the
scaled coordinate $\eta$, the relative quadrature problem is independent of the
terminal time after the common factor $(D_fT)^{-d/2\alpha}$ is removed. This is
the precise sense in which the pure-fractional kernel is time-self-similar.

It does \emph{not} mean that a fixed physical window is independent of time. If
$0\le |Y|\le R$ and $T\in[t_{\min},t_{\max}]$, the scaled interval is
$0\le\eta\le\eta_{\max}$ with
$\eta_{\max}=R/(D_ft_{\min})^{1/(2\alpha)}$. Decreasing $t_{\min}$ or increasing
$R$ enlarges the active range of Gaussian scales and may increase the number of
terms. Once the SOG approximation has been constructed for this largest scaled radius,
the same nodes and weights are reused for all later times in the interval. When
$D_o>0$, after factoring out the common scale
$(D_ft)^{-d/(2\alpha)}$, the remaining dimensionless kernel also depends on
$\lambda(t)=D_ot/(D_ft)^{1/\alpha}$; in \cref{eq::scaled_q_integral}, this
parameter enters only through the shift $s\mapsto s+\lambda(t)$.
Equivalently, in the Fourier-side error formula \cref{eq::IFT_error_explicit},
the SOG error is multiplied by $e^{-D_ot|\bm{k}|^2}\le1$. Thus, for the
absolute-error estimate used here, the pure-fractional case $D_o=0$ is the
worst case; positive ordinary diffusion can only add Gaussian damping.
\end{remark}

\subsection{Parameter selection for a prescribed tolerance and evaluation window}
\label{sec::params}
The estimates in \cref{sec::SOE_approx,sec::Error} give an all-$\alpha$
a~priori construction for the scalar multiplier $e^{-x^\alpha}$ and its induced
Green's-function approximation. This subsection formulates the corresponding
parameter choice directly at the level of the scaled Green's function. The
construction applies for every $\alpha\in(0,1)$; the special value
$\alpha=1/2$ enters only through the closed-form identities recorded at the end
of the subsection.

\medskip\noindent\textbf{Scaled Green's-function representation.}\;
Let $y=|\bm{x}-\bm{x}_0-\bm{b}t|$ and define
\begin{equation}\label{eq::scaled_profile}
    \eta=\frac{y}{(D_ft)^{1/(2\alpha)}},\qquad
    \lambda(t)=\frac{D_ot}{(D_ft)^{1/\alpha}}.
\end{equation}
The following proposition separates the self-similar scaling from the
finite Gaussian quadrature and transfers relative-error estimates from the
scaled profile to the physical Green's function.
\begin{proposition}[Scaled representation and relative error]
\label{prop::Q_sog}
Let $\alpha\in(0,1)$, $d\ge1$, $D_f>0$, $D_o\ge0$, and $t>0$. For
$Y=\bm{x}-\bm{x}_0-\bm{b}t$, $y=|Y|$, define $\eta$ and $\lambda(t)$ by
\cref{eq::scaled_profile}. Then the Green's function has the exact scaled
representation
\begin{equation}\label{eq::scaled_q}
    p(y,t)=(D_ft)^{-\frac{d}{2\alpha}}Q_{\alpha,d}(\eta,\lambda(t)),
\end{equation}
where
\begin{equation}\label{eq::scaled_q_integral}
    Q_{\alpha,d}(\eta,\lambda)
    =\int_0^\infty \rho_\alpha(s)\,[4\pi(s+\lambda)]^{-d/2}
    \exp\!\left(-\frac{\eta^2}{4(s+\lambda)}\right)\,\mathrm ds .
\end{equation}
The finite SOG rule with $s_\ell=e^{\ell h}$ and
$w_\ell=h\,s_\ell\rho_\alpha(s_\ell)$ gives
\begin{equation}\label{eq::scaled_q_sog}
    Q_{h,L,U}(\eta,\lambda)
    =\sum_{\ell=L}^{U}w_\ell\,[4\pi(s_\ell+\lambda)]^{-d/2}
    \exp\!\left(-\frac{\eta^2}{4(s_\ell+\lambda)}\right).
\end{equation}
Consequently, the finite physical-space approximation
\[
    p_{h,L,U}(y,t)=(D_ft)^{-\frac{d}{2\alpha}}
    Q_{h,L,U}(\eta,\lambda(t))
\]
is exactly the finite sum of Gaussian heat kernels in
\cref{eq::Heat_explicit}. Moreover, for any physical window
$\mathcal W$ and its scaled image
\[
    \mathcal D_{\mathcal W}
    =\{(\eta,\lambda(t)):\ (y,t)\in\mathcal W\},
\]
the pointwise relative errors are identical:
\[
    \frac{p_{h,L,U}(y,t)}{p(y,t)}-1
    =
    \frac{Q_{h,L,U}(\eta,\lambda(t))}
    {Q_{\alpha,d}(\eta,\lambda(t))}-1 .
\]
Thus, if
\[
    \sup_{(\eta,\lambda)\in\mathcal D_{\mathcal W}}
    \left|\frac{Q_{h,L,U}(\eta,\lambda)}
    {Q_{\alpha,d}(\eta,\lambda)}-1\right|\le \epsilon,
\]
then
\[
    \sup_{(y,t)\in\mathcal W}
    \left|\frac{p_{h,L,U}(y,t)}{p(y,t)}-1\right|\le\epsilon .
\]
\end{proposition}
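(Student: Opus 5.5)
The plan is to derive \cref{eq::scaled_q_integral} straight from the Fourier representation \cref{eq::p_intergral}, inserting the Laplace representation \cref{eq:kwwfunrep} into the fractional factor. Write $a=(D_ft)^{1/\alpha}$, so that $D_ft|\bm{k}|^{2\alpha}=(a|\bm{k}|^2)^{\alpha}$. Then \cref{eq:kwwfunrep} with $x=a|\bm{k}|^2$ gives
\[
e^{-D_ft|\bm{k}|^{2\alpha}}=\int_0^\infty e^{-s a|\bm{k}|^2}\rho_\alpha(s)\,\mathrm ds .
\]
Substituting into \cref{eq::p_intergral}, the Fourier integrand becomes $\rho_\alpha(s)\,e^{iY\cdot\bm{k}}e^{-(D_ot+sa)|\bm{k}|^2}$. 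Since $\rho_\alpha\ge0$ and $|e^{iY\cdot\bm{k}}|=1$, the absolute integrand has finite double integral $\int_0^\infty\rho_\alpha(s)\,[\pi/(D_ot+sa)]^{d/2}\,\mathrm ds$ only if this converges. For $s\to\infty$ it does, by $\rho_\alpha(s)\le C_\alpha s^{-1-\alpha}$ from Lemma~\ref{lemma::rho_alpha}. Near $s=0$ it converges because of the super-exponential decay in property~3 of the same lemma, which dominates $s^{-d/2}$ even when $D_o=0$. Tonelli/Fubini then lets us swap the order of integration.

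The inner $\bm{k}$-integral is the standard heat kernel:
\[
(2\pi)^{-d}\int e^{iY\cdot\bm{k}}e^{-c|\bm{k}|^2}\,\mathrm d\bm{k}=(4\pi c)^{-d/2}e^{-y^2/(4c)},\qquad c=D_ot+sa=a(s+\lambda).
\]
Factoring $a^{-d/2}=(D_ft)^{-d/(2\alpha)}$ out of the prefactor, and using $y^2/(4a(s+\lambda))=\eta^2/(4(s+\lambda))$, yields \cref{eq::scaled_q} with $Q_{\alpha,d}$ as in \cref{eq::scaled_q_integral}. The integral over $s$ needs no change of variables, since $\rho_\alpha$ is already in the $s$ variable.

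For the finite rule, the same algebra applied term by term to \cref{eq::scaled_q_sog} gives
\[
(D_ft)^{-d/(2\alpha)}\,w_\ell\,[4\pi(s_\ell+\lambda)]^{-d/2}e^{-\eta^2/(4(s_\ell+\lambda))}=w_\ell\,(4\pi C_\ell^t)^{-d/2}e^{-y^2/(4C_\ell^t)},
\]
because $a(s_\ell+\lambda)=D_ot+s_\ell(D_ft)^{1/\alpha}=C_\ell^t$. This is exactly the heat kernel \cref{eq::Heat_explicit}, so $p_{h,L,U}$ coincides with the SOG sum \cref{eq::p_approx} with $L=-M_1$ and $U=M_2$. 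Note that the proposition reuses the letter $U$ for the upper index.

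The relative-error identity follows by dividing the two scaled representations: the common factor $(D_ft)^{-d/(2\alpha)}$ cancels. This requires $p>0$, which holds because $Q_{\alpha,d}$ integrates a positive integrand against a positive density. The supremum statement is then immediate, since every $(y,t)\in\mathcal W$ maps to a point of $\mathcal D_{\mathcal W}$ where the relative errors are equal.

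The only step needing care is the Fubini justification near $s\to0$ when $D_o=0$ (i.e.\ $\lambda=0$), where the Gaussian factor behaves like $s^{-d/2}$. This is handled by the bound $\rho_\alpha(s)\le A_\alpha s^{-\gamma}e^{-Ds^{-\alpha/(1-\alpha)}}$ from Lemma~\ref{lemma::rho_alpha}, which makes the integrand bounded near $s=0$ for every $d$.
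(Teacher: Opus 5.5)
Your proof is correct and follows the paper's argument: insert the Laplace representation of $e^{-x^\alpha}$ with $x=(D_ft)^{1/\alpha}|\bm{k}|^2$ into the Fourier integral, combine the Gaussian factors as $(D_ft)^{1/\alpha}(s+\lambda)$, invert the Gaussian, factor out $(D_ft)^{-d/(2\alpha)}$, and cancel this common factor in the relative error. Your Tonelli--Fubini justification, which uses the two tail bounds on $\rho_\alpha$, and your check that $p>0$ are both sound and supply details the paper leaves implicit.
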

\begin{proof}
By Bernstein's theorem and \cref{eq:kwwfunrep},
\[
e^{-D_ft|\bm{k}|^{2\alpha}}
=\int_0^\infty e^{-s(D_ft)^{1/\alpha}|\bm{k}|^2}\rho_\alpha(s)\,ds .
\]
Substituting this identity into \cref{eq::p_intergral} and combining the
ordinary and fractional Gaussian factors gives
\[
D_ot+s(D_ft)^{1/\alpha}=(D_ft)^{1/\alpha}(s+\lambda(t)).
\]
The inverse Fourier transform of
$\exp[-(D_ft)^{1/\alpha}(s+\lambda)|\bm{k}|^2]$ is the heat kernel with variance
parameter $(D_ft)^{1/\alpha}(s+\lambda)$. Therefore
\[
p(y,t)=\int_0^\infty \rho_\alpha(s)
\big(4\pi(D_ft)^{1/\alpha}(s+\lambda)\big)^{-d/2}
\exp\!\left(-\frac{y^2}
{4(D_ft)^{1/\alpha}(s+\lambda)}\right)\,ds .
\]
Factoring out $(D_ft)^{-d/(2\alpha)}$ and using
$\eta=y/(D_ft)^{1/(2\alpha)}$ gives
\crefrange{eq::scaled_q}{eq::scaled_q_integral}. Replacing the integral
in $s$ by the finite trapezoidal rule gives \cref{eq::scaled_q_sog} and,
after undoing the scaling, the Gaussian sum
\cref{eq::Heat_explicit} with $L=-M_1$ and $U=M_2$. The relative-error
identity follows immediately because the positive scaling factor
$(D_ft)^{-d/(2\alpha)}$ is common to the exact and approximate profiles.
\end{proof}

\medskip\noindent\textbf{The relevant range of the kernel argument.}\;
The SOE approximates $e^{-x^\alpha}$, and in the solution its Fourier-side
argument is
$x=(D_ft)^{1/\alpha}|\bm{k}|^2$. Writing the solution radially and changing
variables from $|\bm{k}|$ to $x$ (the substitution used for $\mathcal{E}_\infty$
in \cref{sec::Error}),
\begin{equation}\label{eq::radial_x}
p(Y,t)\ \propto\ (D_ft)^{-\frac{d}{2\alpha}}\int_0^\infty x^{\,d/2-1}\,
\Lambda_d\!\big(\eta\sqrt{x}\big)\,e^{-x^\alpha}\,\mathrm{d}x,\qquad
\eta=\frac{|Y|}{(D_ft)^{1/(2\alpha)}},
\end{equation}
where
\[
\Lambda_d(z)=2^{(d-2)/2}\Gamma\!\left(\frac d2\right)
z^{-(d-2)/2}J_{(d-2)/2}(z)
\]
is the normalized radial kernel ($\Lambda_d(0)=1$). The solution therefore samples
$e^{-x^\alpha}$ only through the weight
$x^{d/2-1}\Lambda_d(\eta\sqrt{x})$. For the pure-fractional case $D_o=0$, a
physical window $|Y|\le R$, $t\in[t_{\min},t_{\max}]$ enters this scaled
description through the largest self-similar displacement
\[
\eta_{\max}=\frac{R}{(D_ft_{\min})^{1/(2\alpha)}}.
\]
The active scalar interval $x\in[x_{\min},x_{\max}]$ is chosen so that this
weight is retained on the region where its envelope exceeds an $\epsilon$-level
threshold. The two endpoints are governed by different data:
\begin{itemize}[leftmargin=1.5em,topsep=2pt,itemsep=1pt]
\item the upper end $x_{\max}$ is \emph{independent of $Y$ and $t$}: for $d>2$ the
radial envelope $x^{d/2-1}e^{-x^\alpha}$ peaks at
$x_\star=\big(\tfrac{d-2}{2\alpha}\big)^{1/\alpha}$, and $x_{\max}$ is the largest
$x$ with
$(\tfrac d2-1)\log\tfrac{x}{x_\star}-\big(x^\alpha-x_\star^\alpha\big)=\log\epsilon$.
For $d\le2$ the same upper-tail scale is obtained directly from
$e^{-x^\alpha}\lesssim\epsilon$. Thus
$x_{\max}\sim\max\!\big(x_\star,\,(\log\tfrac1\epsilon)^{1/\alpha}\big)$ for
$d>2$, and $x_{\max}\sim(\log\tfrac1\epsilon)^{1/\alpha}$ for $d\le2$;
\item the lower end $x_{\min}$ is the far-field, low-frequency cutoff set by
$\eta_{\max}$: as $\Lambda_d(\eta_{\max}\sqrt{x})$ departs from unity only once
$\eta_{\max}\sqrt{x}\gtrsim\sqrt{d}$, the window extends down to
$x_{\min}\sim x_\star/(1+\eta_{\max}^2)$.
\end{itemize}
Thus $x_{\max}$ (from $\alpha,d,\epsilon$) fixes the smallest node and hence
$M_1$, $x_{\min}$ (from $\eta_{\max}$) the largest node and hence $M_2$, and the
step $h$ controls the discretization error. When $D_o>0$, the Fourier-side factor
$e^{-D_ot|\bm{k}|^2}$ gives additional Gaussian damping; the comparison domain
in \cref{alg::SOE_selector} nevertheless retains the full dependence on
$\lambda(t)$.

\medskip\noindent\textbf{A domain-adapted selection procedure.}\;
When $D_o=0$, $\lambda=0$ and a fixed physical window $0\le y\le R$,
$t\in[t_{\min},t_{\max}]$ reduces to
\begin{equation}\label{eq::etamax_selector}
    0\le\eta\le\eta_{\max},\qquad
    \eta_{\max}=\frac{R}{(D_ft_{\min})^{1/(2\alpha)}},
\end{equation}
which formalizes Remark~\ref{rmk::selfsim}: the selection depends on $t_{\min}$,
through $\eta_{\max}$, but not on $t_{\max}$. When $D_o>0$, $\lambda(t)$ must be
retained over the whole interval in the comparison domain. Except in the
closed-form case $\alpha=1/2$, $D_o=0$, the profile $Q_{\alpha,d}$ is computed
from \cref{eq::scaled_q_integral}; the finite set
$\mathcal A\subset\mathcal D_{\mathcal W}$ in \cref{alg::SOE_selector}
approximates the supremum in Proposition~\ref{prop::Q_sog}.

\begin{algorithm}[ht]
\caption{Domain-adapted SOE parameter selection}\label{alg::SOE_selector}
\begin{algorithmic}[1]
\Require Fractional order $\alpha$; dimension $d$; tolerance $\epsilon$; spatial
radius $R$; time interval $[t_{\min},t_{\max}]$; coefficients $D_f,D_o$.
\State Define the comparison domain $\mathcal D$: if $D_o=0$, use
$\mathcal D=\{(\eta,0):0\le\eta\le\eta_{\max}\}$ with
\cref{eq::etamax_selector}; otherwise use
$\mathcal D=\{(\eta,\lambda(t)):t\in[t_{\min},t_{\max}],\,
0\le\eta\le R/(D_ft)^{1/(2\alpha)}\}$.
\State Construct an adaptive sample set $\mathcal A\subset\mathcal D$ containing
the endpoints and points uniform in $\log(1+\eta)$; subsequently augment it by
points where the relative error is largest.
\State Compute a high-accuracy reference $Q_{\rm ref}$ on $\mathcal A$ from
\cref{eq::scaled_q_integral} by high-accuracy quadrature in $u=\log s$; for
$\alpha=1/2$ and $D_o=0$, use the closed-form Cauchy profile instead.
\State For each step size $h$ under consideration, enlarge an initial index
interval $[L_{\rm w},U_{\rm w}]$ until the omitted positive tails of
\cref{eq::scaled_q_sog} are below the prescribed tail tolerances on
$\mathcal A$.
\State Evaluate
\[
E_{\mathcal A}(h,L,U)=
\max_{(\eta,\lambda)\in\mathcal A}
\left|\frac{Q_{h,L,U}(\eta,\lambda)}{Q_{\rm ref}(\eta,\lambda)}-1\right|
\]
in logarithmic form. Choose the largest $h$ for which
$E_{\mathcal A}(h,L_{\rm w},U_{\rm w})$ is below the prescribed discretization
tolerance.
\State With this $h$, remove terms from the left and right ends of the initial
interval until the shortest consecutive band $[L,U]$ satisfying
$E_{\mathcal A}(h,L,U)\le\epsilon$ is found. If no such band exists, decrease
$h$ and repeat.
\State Refine $\mathcal A$ near the observed maxima of the relative error and
repeat until further refinement leaves $E_{\mathcal A}$ unchanged to the
prescribed tolerance.
\Ensure Nodes $s_\ell=e^{\ell h}$, weights $w_\ell=h\,s_\ell\rho_\alpha(s_\ell)$,
and $\nexp=U-L+1$.
\end{algorithmic}
\end{algorithm}

The intermediate tolerances in \cref{alg::SOE_selector} enter only in the
preliminary determination of the step size and truncation interval. The final
SOG parameters are required to satisfy $E_{\mathcal A}\le\epsilon$ on the
adaptively refined set $\mathcal A$. For $\alpha=1/2$ and $D_o=0$,
$Q_{\rm ref}$ is the closed-form Cauchy profile; in the remaining cases, it is
computed from the scaled integral representation by high-accuracy quadrature.
This finite-set comparison is used for the domain-adapted parameter choice,
while the continuum a~priori estimate of \cref{sec::Error} provides the rigorous
error bound for the pure-fractional case $D_o=0$. In high dimensions, both
$Q_{\rm ref}$ and $Q_{h,L,U}$ are evaluated in logarithmic form, and the largest
term is factored out of the finite SOG sum to preserve numerical stability.

\medskip\noindent\textbf{Closed form at $\alpha=1/2$.}\;
The preceding construction does not rely on a closed form for
$\rho_\alpha$. When $\alpha=1/2$ and $D_o=0$, however, the one-sided stable
density is elementary,
$\rho_{1/2}(s)=(2\sqrt{\pi})^{-1}s^{-3/2}e^{-1/(4s)}$, and the discretization
error of the infinite trapezoidal rule can be written explicitly. These formulas
give a closed-form reference for the general parameter-selection criterion.
\begin{proposition}\label{prop::disc}
For $\alpha=\tfrac12$, let $\nu=(d+1)/2$, $\tau_m=2\pi m/h$, and
$\beta_Y=1+|Y|^2/(D_fT)^2$. For $D_o=0$ and every evaluation point
$Y=\bm{x}-\bm{x}_0^T$ and time $T$, the pointwise relative discretization error
of the infinite trapezoidal-rule approximation $p_h$ is
\begin{equation}\label{eq::disc_exact}
\frac{p_h(Y,T)-p(Y,T)}{p(Y,T)}
=\sum_{m\ne0}\left(\frac{4}{\beta_Y}\right)^{i\tau_m}
\frac{\Gamma(\nu+i\tau_m)}{\Gamma(\nu)} .
\end{equation}
Consequently,
\begin{equation}\label{eq::disc_bound}
\left|\frac{p_h(Y,T)-p(Y,T)}{p(Y,T)}\right|
\le
2\sum_{m\ge1}\frac{\big|\Gamma\!\big(\tfrac{d+1}{2}+i\,2\pi m/h\big)\big|}{\Gamma\!\big(\tfrac{d+1}{2}\big)},
\end{equation}
and this upper bound is independent of $Y$ and of $T$.
\end{proposition}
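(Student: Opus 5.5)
The plan is to work entirely at the level of the scaled profile of Proposition~\ref{prop::Q_sog} and to apply the Poisson summation formula \eqref{eq:poissonsf}, exactly as in the proof of Theorem~\ref{thm::fractional_approx}. The difference is that here every Fourier coefficient can be computed in closed form, because the $\alpha=1/2$ density is elementary.

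\textbf{Reduction.} Since $D_o=0$ we have $\lambda=0$ and $\eta^2=|Y|^2/(D_fT)^{1/\alpha}=|Y|^2/(D_fT)^2$, so $\beta_Y=1+\eta^2$. By Proposition~\ref{prop::Q_sog} the relative error of $p_h$ equals the relative error of the infinite-rule profile $Q_h(\eta)=h\sum_{n\in\mathbb Z}g(nh)$ against $Q_{1/2,d}(\eta)=\int_{\mathbb R}g(u)\,\mathrm du$. Here $g(u)=e^{u}\rho_{1/2}(e^u)(4\pi e^u)^{-d/2}e^{-\eta^2 e^{-u}/4}$ is the integrand of \eqref{eq::scaled_q_integral} after the substitution $s=e^u$.

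\textbf{Regularity and Poisson summation.} Substituting $\rho_{1/2}(s)=(2\sqrt\pi)^{-1}s^{-3/2}e^{-1/(4s)}$ gives
\[
g(u)=c_d\,e^{-\nu u}e^{-\beta_Y e^{-u}/4},\qquad c_d=(2\sqrt\pi)^{-1}(4\pi)^{-d/2},\quad \nu=\tfrac{d+1}{2}.
\]
This $g$ is smooth. It decays doubly exponentially as $u\to-\infty$ and like $e^{-\nu u}$ as $u\to+\infty$, so Poisson summation with shift $a=0$ applies and yields
\[
Q_h-Q=\sum_{m\ne0}\widehat g(\tau_m),\qquad \tau_m=\frac{2\pi m}{h}.
\]
Absolute convergence of this series will follow from the Gamma decay established in the next step.

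\textbf{Closed-form transform.} Returning to the variable $s=e^u$,
\[
\widehat g(k)=c_d\int_0^\infty s^{-ik-\nu-1}e^{-\beta_Y/(4s)}\,\mathrm ds .
\]
The change of variables $v=\beta_Y/(4s)$ turns this into a Gamma integral:
\[
\widehat g(k)=c_d\,(\beta_Y/4)^{-\nu-ik}\,\Gamma(\nu+ik).
\]
At $k=0$ this reproduces the exact profile, $Q=\widehat g(0)=c_d(4/\beta_Y)^{\nu}\Gamma(\nu)$, which is a useful consistency check against the known Cauchy profile. Dividing $\widehat g(\tau_m)$ by $\widehat g(0)$ cancels $c_d$ and $(4/\beta_Y)^\nu$, leaving exactly $(4/\beta_Y)^{i\tau_m}\Gamma(\nu+i\tau_m)/\Gamma(\nu)$. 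This is \eqref{eq::disc_exact}.

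\textbf{Bound.} Since $\beta_Y>0$ and $\tau_m$ is real, $|(4/\beta_Y)^{i\tau_m}|=1$. Moreover $|\Gamma(\nu+i\tau)|=|\Gamma(\nu-i\tau)|$ by reflection, so the terms for $\pm m$ have equal modulus. The triangle inequality then gives \eqref{eq::disc_bound}, and the resulting bound visibly depends on neither $Y$ nor $T$. Lemma~\ref{lemma:Gamma} shows $|\Gamma(\nu+i\tau_m)|\sim\sqrt{2\pi}\,|\tau_m|^{\nu-1/2}e^{-\pi|\tau_m|/2}$, which gives the summability needed to justify the Poisson step.

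\textbf{Main obstacle.} I expect the only delicate point to be bookkeeping: the sign convention of the Fourier transform in \eqref{eq:poissonsf} versus the factor $s^{-ik}$, and the exponent arithmetic $e^{u}\cdot e^{-3u/2}\cdot e^{-du/2}=e^{-\nu u}$. Both are harmless, since the sum runs over $\pm m$ symmetrically.
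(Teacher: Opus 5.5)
Your proposal is correct and follows the paper's proof. Both substitute the elementary density $\rho_{1/2}$, pass to $u=\log s$ so the integrand becomes $c\,e^{-\nu u}e^{-\beta_Y e^{-u}/4}$, evaluate its Fourier transform as a Gamma integral, and apply Poisson summation normalized by the $k=0$ term. Your use of the scaled profile, the $\pm m$ symmetry via $\Gamma(\bar z)=\overline{\Gamma(z)}$, and the decay check that justifies Poisson summation only make explicit what the paper leaves implicit.
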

\begin{proof}
Using \cref{eq:kwwfunrep} with $\alpha=1/2$, the fractional heat kernel is the
positive mixture
\[
p(\bm{x},T)=\int_0^\infty\rho_{1/2}(s)(4\pi C)^{-d/2}
e^{-|Y|^2/4C}\,ds,\qquad C=s(D_fT)^2 .
\]
The approximation $p_{\mathrm{SOG}}$ is the trapezoidal rule for this integral
on the nodes $s_\ell=e^{h\ell}$. In the variable $\xi=\log s$ the integrand is
$A\,s^{-(d+1)/2}e^{-\beta_Y/4s}$ with $\beta_Y=1+|Y|^2/(D_fT)^2$; its Fourier
transform is $A\,(4/\beta_Y)^{(d+1)/2+i\tau}\Gamma(\tfrac{d+1}2+i\tau)$. Poisson
summation then gives \cref{eq::disc_exact} as the sum over nonzero Fourier
modes divided by the $\tau=0$ value. Since $|(4/\beta_Y)^{i\tau}|=1$, taking
absolute values gives \cref{eq::disc_bound}.
\end{proof}
In this closed-form case, a sufficient all-space choice of $h$ is the largest
value for which
\begin{equation}\label{eq::h_sharp}
2\sum_{m\ge1}\frac{\big|\Gamma\!\big(\tfrac{d+1}{2}+i\,2\pi m/h\big)\big|}{\Gamma\!\big(\tfrac{d+1}{2}\big)}
\le\epsilon
\end{equation}
holds. By Lemma~\ref{lemma:Gamma},
$|\Gamma(\nu+i\tau)|\sim\sqrt{2\pi}\,(\nu^2+\tau^2)^{(2\nu-1)/4}e^{-\pi|\tau|/2}$
decays exponentially in $|\tau|$. The leading-mode approximation, combined with
the Gaussian approximation of the Gamma ratio for $2\pi/h\ll (d+1)/2$, gives
the large-$d$ estimate
\begin{equation}\label{eq::h_sharp_asymp}
h\approx\frac{2\pi}{\sqrt{(d+1)\ln(2/\epsilon)}}.
\end{equation}
The integrand peaks at $s_\star=\beta_Y/(2(d+1))$ and decays as
$\exp(-\tfrac{d+1}2\psi(v))$, where
$\psi(v)=v+e^{-v}-1$ and $v=\log(s/s_\star)$. Equating this decay factor to
$\epsilon$ gives the retained band
\begin{equation}\label{eq::band}
s_\ell=e^{h\ell}\in\Big[\frac{e^{v_{\mathrm{lo}}}}{2(d+1)},\
\frac{(1+\eta_{\max}^2)\,e^{v_{\mathrm{hi}}}}{2(d+1)}\Big],\qquad
\eta_{\max}=\frac{R}{(D_fT_{\min})^{1/(2\alpha)}},
\end{equation}
where the endpoints $v_{\mathrm{lo}}<0$ and $v_{\mathrm{hi}}>0$ solve
\begin{equation}
    \label{eq::vlo_vhi}
    \psi(v_{\mathrm{lo}})=\psi(v_{\mathrm{hi}})=q_d:=\frac{2\log(1/\epsilon)}{d+1}.
\end{equation}
The corresponding term count is
\begin{equation}\label{eq::nexp}
\nexp=M_1+M_2+1=\frac{\ln(1+\eta_{\max}^2)+(v_{\mathrm{hi}}-v_{\mathrm{lo}})}{h}.
\end{equation}
\begin{remark}\label{rmk::nexp_d}
Since $h\propto(d+1)^{-1/2}$ in the large-$d$ estimate
\cref{eq::h_sharp_asymp}, $\nexp$ is non-monotonic in $d$. At small $d$ the
threshold $q_d$ in \cref{eq::vlo_vhi} is large, providing that the band width dominates
with
\begin{equation}
    \label{eq::n_exp_small_d}
    \frac{v_{\mathrm{hi}}-v_{\mathrm{lo}}}{h}\approx\frac{q_d+1+\log q_d}{h}
    \sim O\!\big((d+1)^{-1/2}\big),
\end{equation}
and thus $\nexp$ decreases. It reaches a minimum near $d\approx29$; once $q_d$ is
small, the displacement term $\ln(1+\eta_{\max}^2)/h\propto\sqrt{d}$ becomes
dominant and $\nexp$ grows again.
\end{remark}

The complete FFPE solver is summarized in \cref{alg::SOG_FFPE}.

\begin{algorithm}[ht]
\caption{The SOG fast FFPE solver}\label{alg::SOG_FFPE}
\begin{algorithmic}[1]
\Require Drift vector $\bm{b}$; ordinary diffusion coefficient $D_o$; fractional diffusion coefficient $D_f$; initial point $\bm{x}_0$; terminal time $T$; $d$-dimensional tensor evaluation grid $\bm{X}=\otimes_{j=1}^{d}\{X_{j}\}$, where $X_j=\{x_{j}^{1},\cdots,x_{j}^{n_j}\}$ is the grid in the $j$-th coordinate; accuracy tolerance $\epsilon$.

\State According to $\epsilon$, select the SOG parameters from
\cref{alg::SOE_selector}; in the pure-fractional case, a formal a~priori
guarantee can be obtained from the conservative construction of
\cref{sec::Error}.

\State For each SOG term $\ell=-M_1,\cdots,M_2$, assemble the
one-dimensional Gaussian factors of $p_\ell(\bm{x},T)$ in
\cref{eq::Heat_explicit} on the coordinate sets $X_j$. Each term can be
assembled on its own with $O(\sum_{j=1}^{d}n_j)$ auxiliary storage, or all
factors can be stored with $O(\nexp\sum_{j=1}^{d}n_j)$ storage.

\Ensure The separated approximation $p_{\mathrm{SOG}}$ in
\cref{eq::p_approx}, stored through its one-dimensional Gaussian factors with
$O(\nexp\sum_{j=1}^{d}n_j)$ storage; dense grid values can be generated from this
representation when requested.
\end{algorithmic}
\end{algorithm}

\section{Numerical results}
\label{sec::Num}

In this section we assess the fast SOG solver of \cref{alg::SOG_FFPE}
and study its accuracy, robustness, and computational cost. All experiments are
carried out in MATLAB R2025b on a laptop with an Intel Core Ultra 7~255H CPU and
64\,GB of memory, with a serial implementation. As an
exact reference we use the only nontrivial case for which the
fundamental solution is known in closed form in every dimension: the pure
fractional case $D_o=0$, $\alpha=1/2$, for which \cref{eq::int_expression}
evaluates to the $d$-dimensional Cauchy distribution \cite{ye2026fast}
\begin{equation}
    \label{eq::cauchy_ref}
    p(\bm{x},T)=\frac{\Gamma\!\left(\frac{d+1}{2}\right)}{\pi^{\frac{d+1}{2}}}
    \frac{D_fT}{\big[(D_fT)^2+y^2\big]^{\frac{d+1}{2}}},
    \qquad y=|\bm{x}-\bm{x}_0^T|.
\end{equation}
This reference is exact to machine precision for arbitrary $d$, and is therefore
well suited to assessing the solver in the high-dimensional and small-time
regimes that are otherwise the most difficult to compare against closed-form
references.

All reported term counts are the consecutive bands returned by
\cref{alg::SOE_selector}. A run is fully specified by
$(\alpha,d,D_o,D_f,\epsilon)$, the physical window in $y$ and $t$, and the
resulting parameters $(h,L,U)$. The SOG weights are then
$w_\ell=h e^{h\ell}\rho_\alpha(e^{h\ell})$, and all high-dimensional sums and
references are evaluated through logarithms using \cref{eq::logsumexp} and
log-Gamma functions. For $\alpha=1/2$ errors are measured against the exact
Cauchy density. For $\alpha\ne1/2$ the reported errors are the
solution-level self-convergence estimates described in \cref{sec::Num_validation}.

\subsection{High-order accuracy and convergence}
\label{sec::Num_conv}

\begin{figure}[!t]
    \centering
    \includegraphics[width=\textwidth]{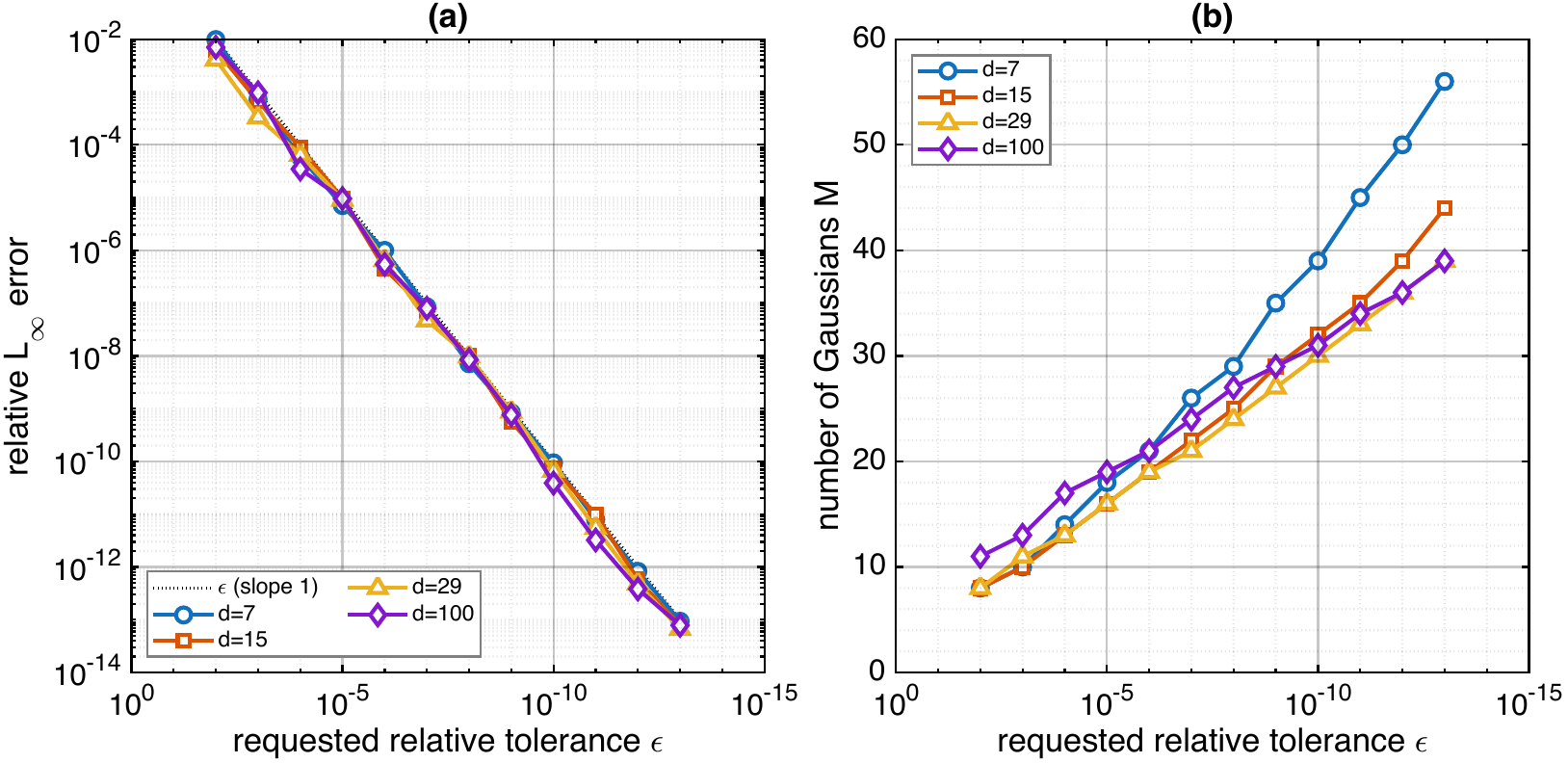}
    \caption{High-order convergence of the SOG solver against the exact Cauchy
    solution~\eqref{eq::cauchy_ref} ($D_o=0$, $\alpha=1/2$, $T=1$).
    (a)~relative $L_\infty$ error versus the prescribed tolerance
    $\epsilon$; the dotted line is the slope-one reference. (b)~the number of
    Gaussian terms $\nexp$ grows only logarithmically in $1/\epsilon$.}
    \label{fig::convergence}
\end{figure}

We first verify that the SOG solver attains a prescribed accuracy in
moderate and high dimension. Fixing $D_f=1$, $D_o=0$, $\alpha=1/2$, and $T=1$,
we prescribe a relative accuracy $\epsilon$, select the SOG parameters by
\cref{alg::SOE_selector} (with the step $h$ initialized by
\cref{eq::h_sharp} and the band by \cref{eq::band,eq::nexp}),
and measure the relative $L_\infty$ error of $p_{\text{SOG}}$ against the exact
Cauchy density~\eqref{eq::cauchy_ref} over $y\in[0,2]$. The dimensions in
\cref{fig::convergence} are $d=7,15,29,100$. Panel~(a) shows that the
measured error tracks the prescribed tolerance with slope one across eleven
orders of magnitude, reaching $7.8\times10^{-14}$ at $d=100$ when
$\epsilon=10^{-13}$. The accuracy is therefore limited by the prescribed
tolerance and, ultimately, by the arithmetic of the double-precision evaluation. Two further observations
confirm the solution-level parameter selection of \cref{sec::params}:
\begin{itemize}[leftmargin=1.5em]
\item The number of Gaussians $\nexp$ grows only logarithmically in $1/\epsilon$
(\cref{fig::convergence}(b)). Across all dimensions and tolerances shown,
$\nexp$ lies between $8$ and $56$; at $\epsilon=10^{-13}$ the term counts are
$56,44,39,39$ for $d=7,15,29,100$, respectively. The dimension enters through
both the step $h\propto(d+1)^{-1/2}$ (asymptotically,
\cref{eq::h_sharp_asymp}) and the truncation band
\cref{eq::band,eq::nexp}. The decrease over this range is the
low-to-moderate-dimensional branch of the non-monotonic dependence described in
Remark~\ref{rmk::nexp_d}; in very high dimension the displacement-band term
$\ln(1+\eta_{\max}^2)/h\propto\sqrt d$ dominates and $\nexp$ grows again
(\cref{sec::Num_highd}).
\item Replacing the evaluation of $\rho_\alpha$ by the exact closed form of the
one-sided stable (L\'evy) density $\rho_{1/2}(s)=\tfrac{1}{2\sqrt{\pi}} s^{-3/2}e^{-1/(4s)}$ reproduces the errors of \cref{fig::convergence} to
every displayed digit. The density evaluation therefore does not limit
the accuracy; the residual error is due entirely to the SOE truncation.
\end{itemize}
For $\alpha=1/2$, Proposition~\ref{prop::disc} controls the relative
discretization error independently of the evaluation point and of $T$, and
\cref{alg::SOE_selector} enforces $E_{\mathcal A}\le\epsilon$ for the finite
retained band on the prescribed spatial window; by
Proposition~\ref{prop::Q_sog}, this scaled relative error equals that of the
physical Green's function, so the guarantee transfers directly to
$p_{\text{SOG}}$ and the measured error tracks $\epsilon$ uniformly in $d$.

\begin{remark}[Range of fractional orders]\label{rmk::alpha_range}
The experiments reported here use $\alpha\in[0.1,0.9]$. As $\alpha\to0^+$ or
$\alpha\to1^-$ the bound on $I_\alpha$ in Theorem~\ref{thm::I_alpha_bound}
diverges, so the trapezoidal step $h$ shrinks and the term count $\nexp$ grows
rapidly; this is intrinsic, since the representing density becomes singular at
both endpoints, degenerating to a Dirac measure at $z=1$ as $\alpha\to1^-$ and
to a nonregular limiting object as $\alpha\to0^+$. Independently, direct
evaluation of $\rho_\alpha$ becomes ill-conditioned near the endpoints: in our
implementation, the relative error of the Laplace identity
$\int_0^\infty\rho_\alpha(t)e^{-xt}\mathrm{d}t=e^{-x^\alpha}$ exceeds $10^{-3}$
for $\alpha\lesssim0.05$ and $\alpha\gtrsim0.99$, while it is at the level of
machine precision throughout $\alpha\in[0.1,0.95]$. Accurate computation in the
immediate vicinity of the endpoints requires a dedicated evaluator for
$\rho_\alpha$ (for instance, numerical steepest descent on the contour of
Appendix~\ref{sec::rho_estimate}) and is left to future work.
\end{remark}

\subsection{Self-convergence study}
\label{sec::Num_validation}

The exact Cauchy reference~\eqref{eq::cauchy_ref} is available only for
$\alpha=1/2$. For other fractional orders we assess the assembled solution by
\emph{self-convergence}: the chosen finite Gaussian sum is compared, on the
scaled radial interval, with a refined reference at half the step and a wider
band. Since the inverse Fourier transform of each Gaussian is exact, this probes
the only numerical approximation in the method, the SOE approximation of
$e^{-x^\alpha}$ on the active solution window. These entries should therefore be
interpreted as finite-domain self-convergence estimates; the independent
exact-reference checks are the $\alpha=1/2$ row and the Cauchy tests in the
other tables.

\begin{table}[!th]
    \centering
    \begin{tabular*}{\textwidth}{@{\extracolsep{\fill}}c|rrrr@{}}
        \hline
        $\alpha$ & $\eta_{\max}$ & $\nexp$ & $h$ & error\\
        \hline
        0.1 & $5.96\,\textrm{e}{2}$ &  2492 & $3.43\,\textrm{e}{-2}$ & $4.5\,\textrm{e}{-12}$\\
        0.3 & $1.34\,\textrm{e}{1}$ &   631 & $2.67\,\textrm{e}{-2}$ & $1.5\,\textrm{e}{-11}$\\
        0.5 & $6.25\,\textrm{e}{0}$ &   126 & $3.43\,\textrm{e}{-2}$ & $9.1\,\textrm{e}{-13}$\\
        0.7 & $4.51\,\textrm{e}{0}$ &    25 & $2.08\,\textrm{e}{-2}$ & $1.2\,\textrm{e}{-11}$\\
        0.9 & $3.77\,\textrm{e}{0}$ &    21 & $9.89\,\textrm{e}{-3}$ & $6.5\,\textrm{e}{-12}$\\
        \hline
    \end{tabular*}
    \caption{Self-convergence across fractional orders in $d=1000$. Each row
    uses \cref{alg::SOE_selector} at target relative tolerance
    $\epsilon=10^{-10}$ for $D_o=0$, $D_f=8$, $t=0.04$, and $y\in[0,2]$.
    For $\alpha\ne1/2$, the error is the finite-domain self-convergence estimate;
    for $\alpha=1/2$, it is the true error against the Cauchy
    density~\eqref{eq::cauchy_ref}.}
    \label{tab::selfconv}
\end{table}

\Cref{tab::selfconv} reports the result at $d=1000$ for
$\alpha=0.1,0.3,0.5,0.7,0.9$, with $D_o=0$, $D_f=8$, $t=0.04$, and $y\in[0,2]$,
at prescribed tolerance $\epsilon=10^{-10}$. It also lists the scaled endpoint
$\eta_{\max}=R/(D_ft)^{1/(2\alpha)}$, since this -- not $t$ alone -- sets the
active range of Gaussian scales. The largest term count occurs at $\alpha=0.1$,
where the fixed window maps to the largest scaled radius; the $\alpha=0.3$ row,
in which a saddle-point approximation of $\rho_\alpha$ is blended smoothly with
the direct evaluation at small $s$, indicates that the density evaluation is not
the limiting factor in this test. The $\alpha=1/2$ row, where the true error
against \cref{eq::cauchy_ref} is available, also meets the tolerance and anchors
the self-convergence estimates at the other orders.

\subsection{Robustness across time and dimension}
\label{sec::Num_robust}

We now fix a relative tolerance $\epsilon=10^{-12}$ and, for each $(t,d)$, select
the SOG parameters from \cref{alg::SOE_selector}
(with the $\alpha=1/2$ step from \cref{eq::h_sharp} and band from
\cref{eq::band,eq::nexp}). We evaluate
$p_{\text{SOG}}$ for $D_o=0$, $D_f=8$, $\alpha=1/2$ on the higher-dimensional part
of the $t\times d$ grid used in \cite{ye2026fast}, reporting the maximum relative
error over $y\in[0,2]$.

\begin{table}[!th]
    \centering
    \resizebox{\textwidth}{!}{%
    \begin{tabular}{r|ccccccc}
        \hline
        $d$ & 5 & 9 & 13 & 17 & 21 & 25 & 29\\
        \hline
        SOG (tested $t$ grid) & 1.0\,\textrm{e}{-12} & 1.0\,\textrm{e}{-12}
            & 1.0\,\textrm{e}{-12} & 1.0\,\textrm{e}{-12} & 1.0\,\textrm{e}{-12}
            & 1.0\,\textrm{e}{-12} & 1.0\,\textrm{e}{-12}\\
        \cite{ye2026fast}, $t{=}0.004$ & 2.2\,\textrm{e}{-9}
            & 2.2\,\textrm{e}{-6} & 3.2\,\textrm{e}{-3} & 1.7\,\textrm{e}{+0}
            & 5.4\,\textrm{e}{+2} & 2.3\,\textrm{e}{+6} & 3.1\,\textrm{e}{+9}\\
        \cite{ye2026fast}, $t{=}0.2$ & 7.7\,\textrm{e}{-16}
            & 9.6\,\textrm{e}{-16} & 1.7\,\textrm{e}{-15} & 1.9\,\textrm{e}{-15}
            & 2.8\,\textrm{e}{-15} & 5.5\,\textrm{e}{-15} & 7.1\,\textrm{e}{-15}\\
        \hline
    \end{tabular}}
    \caption{Maximum relative error over $y\in[0,2]$ for the pure-fractional
    case $D_o=0$, $D_f=8$, $\alpha=1/2$. The SOG row uses a separate
    quadrature for each dimension at relative tolerance $\epsilon=10^{-12}$; a
    single approximation, with its terms fixed at the smallest time
    $t_{\min}=0.004$, covers the tested time grid and uses
    $\nexp=71$--$82$ terms over this range.
    On this tested grid, the integral solver of~\cite{ye2026fast} is accurate at
    moderate time but loses accuracy rapidly at the smallest time as $d$
    increases.}
    \label{tab::headtohead}
\end{table}

\begin{figure}[!t]
    \centering
    \includegraphics[width=0.52\textwidth]{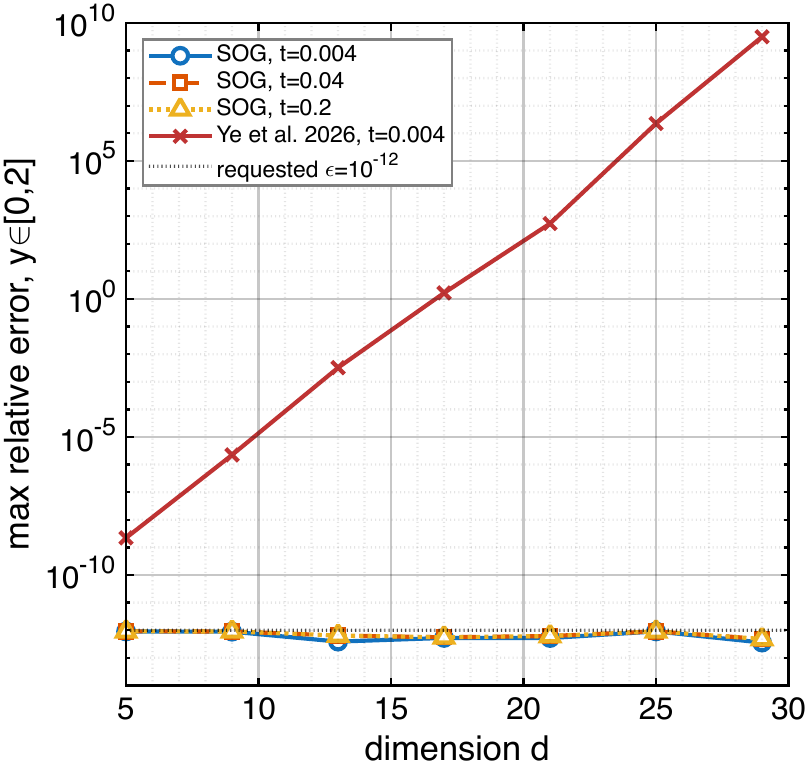}
    \caption{Maximum relative error versus dimension for the pure-fractional case
    ($D_o=0$, $D_f=8$, $\alpha=1/2$). The three SOG curves for $t=0.004,0.04,0.2$
    coincide and stay at the prescribed tolerance $\epsilon=10^{-12}$ (dotted),
    confirming that the approximation determined by $\eta_{\max}$ is reused across the time
    grid and remains uniform in $d$. For
    comparison, the integral-quadrature solver of \cite{ye2026fast} at
    $t=0.004$ (red) loses accuracy rapidly as $d$ grows, reaching
    $\mathcal{O}(10^{9})$ at $d=29$ on this test.}
    \label{fig::robustness}
\end{figure}

Two features stand out. First, the time dependence is handled by self-similar
scaling (Proposition~\ref{prop::disc} and Remark~\ref{rmk::selfsim}): a single
approximation sized at $t_{\min}=0.004$, using $\nexp=71$--$82$ terms over
$d=5$--$29$, holds relative error about $10^{-12}$ across the whole time grid,
with no degradation as $t\to0^+$ (the regime in which the solution concentrates
toward the Dirac measure). Second, the error is \emph{uniform in dimension},
staying at the prescribed $10^{-12}$ throughout. \Cref{tab::headtohead}
contrasts this with the integral solver of~\cite{ye2026fast}: at small time
($t=0.004$) its relative error grows rapidly over the tested dimensions,
exceeding unity for $d\ge17$ and reaching $\mathcal{O}(10^{9})$ at $d=29$,
whereas at moderate time ($t=0.2$) it attains machine precision. The two
approaches are thus
\emph{complementary}: the integral solver of~\cite{ye2026fast} is effective for
high-precision evaluation in low to moderate dimension, while the SOG holds the
prescribed accuracy uniformly across the tested $(t,d)$ range -- including the
small-$t$, high-$d$ corner where the integral solver loses accuracy, as shown in
\cref{fig::robustness} -- and scales to much higher dimensions
(\cref{sec::Num_highd}).

\subsection{Scaling to very high dimension}
\label{sec::Num_highd}

Unlike the oscillatory radial integrand $r^{d/2}J_{(d-2)/2}(yr)$ that limits
\cite{ye2026fast} to moderate $d$, the SOG solution is a \emph{positive} sum of
separable Gaussians, so its logarithm can be evaluated stably. Setting
$a_\ell(\bm{x})=\log w_\ell-\tfrac{d}{2}\log(4\pi C_\ell^{T})
-|\bm{x}-\bm{x}_0^T|^2/(4C_\ell^{T})$ and $a_\star=\max_\ell a_\ell(\bm{x})$,
\begin{equation}
\label{eq::logsumexp}
\log p_{\text{SOG}}(\bm{x},T)=a_\star+\log\sum_{\ell}
\exp\!\big(a_\ell(\bm{x})-a_\star\big),
\end{equation}
so that the per-term magnitudes $(4\pi C_\ell^{T})^{-d/2}$, which overflow or
underflow for large $d$, never appear explicitly. Each evaluation requires
$O(\nexp)$ operations for the radial profile.

\begin{table}[!t]
    \centering
    \begin{tabular*}{\textwidth}{@{\extracolsep{\fill}}r|cccc@{}}
        \hline
        $d$ & $100$ & $1{,}000$ & $10{,}000$ & $100{,}000$\\
        \hline
        terms $\nexp$    & $36$ & $63$ & $156$ & $451$\\
        relative error      & $9.9\,\textrm{e}{-13}$ & $1.6\,\textrm{e}{-12}$ & $7.3\,\textrm{e}{-12}$ & $5.8\,\textrm{e}{-11}$\\
        time per point (\textmu s) & $0.33$ & $0.48$ & $0.96$ & $4.2$\\
        \hline
    \end{tabular*}
    \caption{Very-high-dimensional evaluation of the SOG fundamental solution
    ($\alpha=1/2$, $D_o=0$, $D_f=1$, $T=1$), with relative error measured against
    the exact $d$-dimensional Cauchy solution~\eqref{eq::cauchy_ref} computed
    through its logarithm. Each dimension uses its own approximation, with the number of terms fixed by \cref{eq::nexp} and the sum evaluated through its logarithm via \cref{eq::logsumexp}; the cost per evaluation point is $O(\nexp)$.}
    \label{tab::highd}
\end{table}

These dimensions exceed those reached in the radial-quadrature experiments
of~\cite{ye2026fast}. We again use the exact Cauchy
solution~\eqref{eq::cauchy_ref}: although its magnitude over- or underflows, its
\emph{logarithm} is computable to full relative precision in any dimension via
the log-Gamma function. \Cref{tab::highd} reports the relative error and
evaluation time per point for $d$ up to $10^{5}$. The SOG attains ten-digit
relative accuracy at $d=10^{5}$ in about $4$\,\textmu s per point, with the term
count $\nexp$ rising only from
$36$ at $d=100$ to $451$ at $d=10^5$ (the slow large-$d$ growth is analyzed in
Remark~\ref{rmk::nexp_d}). These tests
give deterministic, high-accuracy evaluations at dimensions beyond the reach of
radial-quadrature methods.

\subsection{General initial data: tensor-product representations}
\label{sec::Num_sog_ic}

A key advantage of the SOG representation is that it provides a reusable building
block that maps separated (tensor-product) data to separated data. The
SOG approximation $p_{\text{SOG}}$ is itself a rank-$\nexp$ separated function: by
\cref{eq::Heat_explicit} each term $p_\ell$ is a tensor product
$\prod_{j=1}^{d} g_\ell(x_j)$ of one-dimensional Gaussians. Consequently, for any
initial datum written as a linear combination of tensor products,
\begin{equation}
    \label{eq::tensor_ic}
    p(\bm{x},0)=\sum_{r=1}^{R}\prod_{j=1}^{d}\phi_{r,j}(x_j),
\end{equation}
linearity and the separability of the heat kernel give the solution in closed
form as
\begin{equation}
    \label{eq::tensor_sol}
    p(\bm{x},T)=\sum_{r=1}^{R}\sum_{\ell=-M_1}^{M_2}w_\ell\prod_{j=1}^{d}
    \big(g_\ell * \phi_{r,j}\big)(x_j-b_jT),
\end{equation}
which is again a sum of tensor products. The dimension $d$ enters only through the \emph{one-dimensional} convolutions
$g_\ell*\phi_{r,j}$, and the rank grows from $R$ to $R\nexp$, recompressible by
standard tensor-rank truncation. Such separated, low-rank formats -- the
canonical, Tucker, and tensor-train decompositions
\cite{beylkin2002numerical,beylkin2005algorithms,hackbusch2012tensor,kolda2009tensor,oseledets2011tt}
and the related tensor networks
\cite{biamonte2017nutshell,orus2014practical,schollwock2011dmrg} -- provide a natural
high-dimensional function class for this solver. Tensor neural
networks
\cite{wang2023tnnsurvey,wang2024posteriori,wang2024multieigenpairs,wang2024tensor}
can be used to construct separated approximations for more general initial data.

\begin{figure}[!t]
    \centering
    \begin{minipage}[b]{0.48\textwidth}
        \centering
        \includegraphics[width=\linewidth]{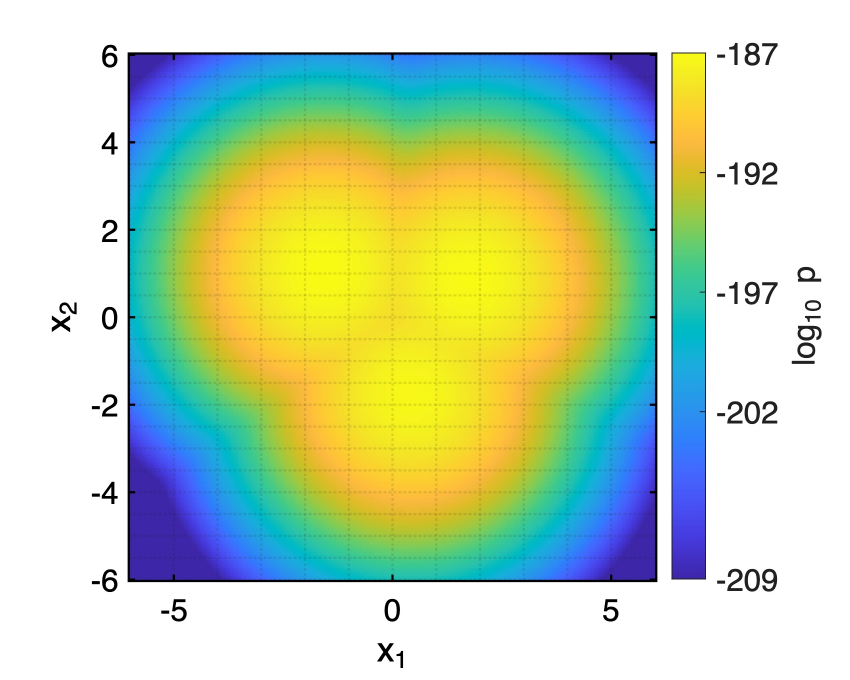}\\[2pt](a)
    \end{minipage}\hfill
    \begin{minipage}[b]{0.40\textwidth}
        \centering
        \includegraphics[width=\linewidth]{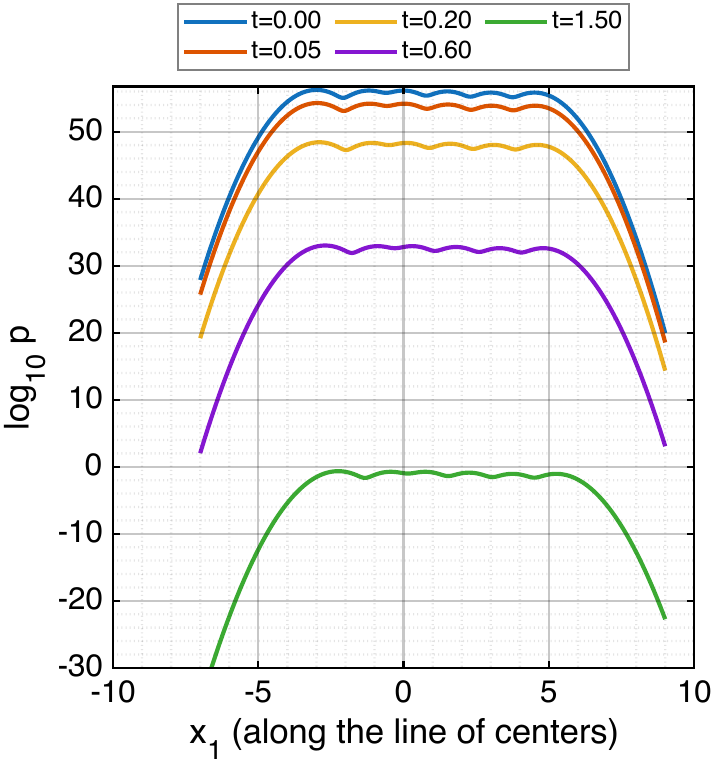}\\[2pt](b)
    \end{minipage}
    \caption{Sum-of-Gaussians solutions in $d=1000$ ($\alpha=1/2$, $D_o=0$,
    $D_f=1$), assembled in closed form from \cref{eq::sog_ic} and evaluated through
    their logarithm \cref{eq::logsumexp}. (a)~A two-dimensional slice at $t=0.4$
    of a three-Gaussian initial condition with drift $\bm{b}=(0.6,-0.4,0,\dots,0)$
    in the $x_1$--$x_2$ plane, shown as $\log_{10}p$ (top $22$ decades); the
    structure is non-radial and heavy-tailed. (b)~Evolution of a six-Gaussian
    initial condition along the line of centers, shown as $\log_{10}p$; the peak
    density exceeds $10^{50}$ and decays over more than eighty orders of magnitude
    as the packet spreads with the characteristic heavy fractional tails.}
    \label{fig::highdim}
\end{figure}

Gaussian initial data are the canonical example: the one-dimensional
convolutions remain Gaussian and are available analytically. Since each
$p_\ell$ in \cref{eq::Heat_explicit} has covariance $2C_\ell^{T}I$, the solution
for the initial condition
\[
p(\bm{x},0)=\sum_{j=1}^{N_g}a_j\,
\mathcal{N}(\bm{x};\bm{c}_j,\sigma_j^2 I)
\]
is
\begin{equation}
    \label{eq::sog_ic}
    p(\bm{x},T)=\sum_{j=1}^{N_g}\sum_{\ell=-M_1}^{M_2}a_j\,w_\ell\,
    \mathcal{N}\!\big(\bm{x};\,\bm{c}_j+\bm{b}T,\,(\sigma_j^2+2C_\ell^{T})I\big),
\end{equation}
a sum of $N_g\nexp$ Gaussians, each a tensor product across the $d$ coordinates,
stored and manipulated in the same factored $O(\nexp dN)$ representation as the
fundamental solution.

We test \cref{eq::sog_ic} directly in high dimension. For a single
Gaussian source, the value at the advected center has the independent
non-oscillatory reference
\[
\frac{\Omega_d}{(2\pi)^d}\int_0^\infty
r^{d-1}\exp\!\left(-\frac{\sigma^2r^2}{2}-D_fT r^{2\alpha}\right)\mathrm{d}r,
\]
which is evaluated after a saddle-point change of variables and does not involve the oscillatory
Bessel integral. With $\alpha=1/2$, $\sigma=0.5$, $D_f=1$, and a broad
$\nexp=3251$ term SOE that preserves $\sum_\ell w_\ell=1$ to the displayed digits, the
relative errors at $T=0.1$ are $1.1\times10^{-13}$, $2.7\times10^{-12}$, and
$6.2\times10^{-11}$ for $d=10^3,10^4,10^5$, respectively; at $T=1$ they are
$3.4\times10^{-13}$, $9.1\times10^{-13}$, and $9.8\times10^{-11}$.

\Cref{fig::highdim}(a) shows a two-dimensional slice of the $d=1000$
solution for a non-radial, three-Gaussian initial condition under drift and pure
fractional diffusion, computed from the separated representation directly rather
than by reducing the data to a radial profile.

\begin{figure}[!t]
    \centering
    \includegraphics[width=0.52\textwidth]{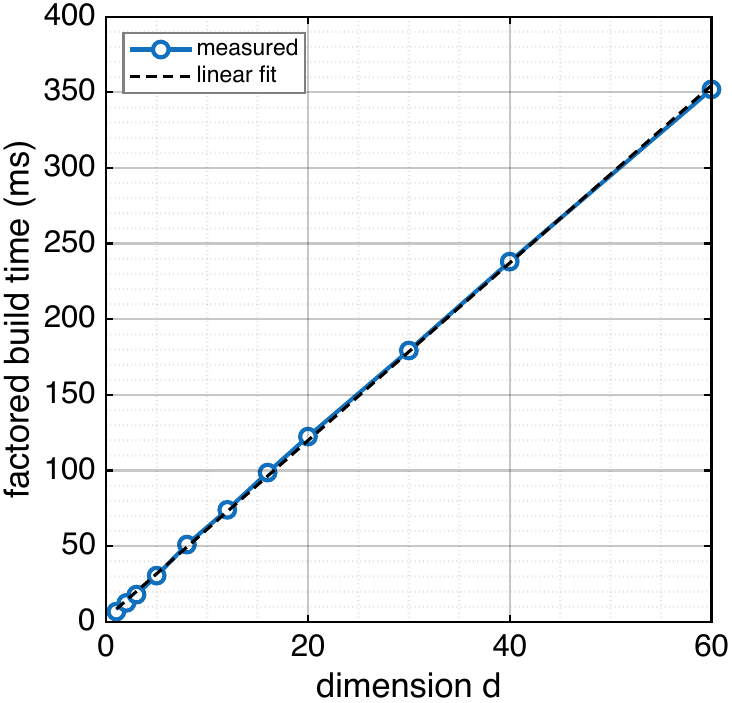}
    \caption{Cost of assembling the factored sum-of-Gaussians solution
    \cref{eq::sog_ic} versus dimension, for $N_g=8$ sources and
    $N=64$ points per dimension. The assembly time grows linearly in $d$, and the
    storage is $d$ factor matrices of size $N\times P$ with $P=N_g\nexp$.}
    \label{fig::scaling}
\end{figure}

Finally, \cref{fig::scaling} confirms the linear-in-$d$ cost. Holding the
SOG approximation fixed at $\nexp=1012$ terms ($P=N_g\nexp=8096$ Gaussians for $N_g=8$
sources) and varying only the dimension, the factored assembly time is
indistinguishable from a straight line through the origin, reaching $0.38$\,s at
$d=60$. In $d=10$ the same $P=8096$ factored Gaussians are assembled in $0.07$\,s,
and a factored evaluation agrees with the explicit double sum over $j$ and $\ell$
to $8\times10^{-16}$ -- whereas the corresponding dense tensor has
$N^{10}\approx10^{18}$ entries and is not a feasible object to store.

Combining this building block with the logarithmic evaluation of
\cref{sec::Num_highd} lets us evolve sum-of-Gaussians initial data in very high
dimension; \Cref{fig::highdim}(b) shows a six-Gaussian initial condition evolved
in $d=1000$ with drift along the line of centers. Spanning more than eighty orders
of magnitude, the solution is representable only through its
logarithm~\eqref{eq::logsumexp}. The mass $\int_{\mathbb{R}^d}p\,\mathrm{d}\bm{x}
=\big(\sum_j a_j\big)\big(\sum_\ell w_\ell\big)$ is conserved to $1.000000$ at
every time, in any dimension, and evaluating one 600-point time slice costs about
$0.09$\,s at $d=1000$.

\section{Conclusion}
\label{sec::conclusion}

We have developed a sum-of-Gaussians solver for the high-dimensional fractional
Fokker--Planck equation with high-order accuracy and error control. The
construction uses the complete monotonicity of the fractional symbol
$e^{-D_ft|\bm{k}|^{2\alpha}}$, which represents it as a continuous
superposition of Gaussians. Discretizing that superposition gives a finite sum
of ordinary heat flows that act independently in each coordinate, with storage
and assembly cost linear in the dimension and quadrature parameters fixed
a~priori for any target accuracy. In our experiments the solver attains more than
ten digits of accuracy, with $\nexp$ growing only logarithmically as the
tolerance tightens and the accuracy uniform across the time windows tested;
evaluating the positive Gaussian form through its logarithm carries the
computation to dimension $10^{5}$.

Because the method approximates the fundamental solution by a separated sum of Gaussians, it
propagates any tensor-product initial datum in closed form, so the only
additional ingredient needed for more general data is a separated, low-rank
representation of it. Sparse-grid and tensor neural network approximation
frameworks are compatible with this requirement and can be combined directly
with the present solver. When a high-dimensional Fourier symbol or interaction
kernel admits an accurate Gaussian-sum expansion, the expansion expresses
nonseparable terms as sums of separable Gaussian contributions. The FFPE results
presented here therefore support the use of SOG approximations as building
blocks for separable representations of kernels and solution operators in
high-dimensional PDEs.

The approach is also relevant beyond the particular FFPE fundamental solution
studied here. For a linear constant-coefficient evolution equation in high
dimension, if the Fourier-space propagator admits an accurate Gaussian-sum
approximation, the same construction expresses the solution operator as a sum of
separable Gaussian evolution operators. For many nonlinear evolution equations,
applying an unconditionally energy-stable scalar auxiliary variable (SAV)
temporal discretization reduces each time step to a linear problem with known
source terms \cite{ShenXuYang2018}. When this linear problem has a
constant-coefficient solution operator that admits an accurate and efficient
Gaussian-sum representation, the present solver can serve as a building block for
high-dimensional nonlinear evolution PDEs.

\section*{Acknowledgments}
The Flatiron Institute is a division of the Simons Foundation. The work of
Q.~Zhou was supported by the National
Natural Science Foundation of China (Grant No.~125B2023).

\appendix
\renewcommand{\theHsection}{appendix.\Alph{section}}
\renewcommand{\theHsubsection}{appendix.\Alph{section}.\arabic{subsection}}
\renewcommand{\theHfigure}{appendix.\Alph{section}.\arabic{figure}}
\renewcommand{\theHtable}{appendix.\Alph{section}.\arabic{table}}
\renewcommand{\theHequation}{appendix.\Alph{section}.\arabic{equation}}
\providecommand{\theHtheorem}{}
\renewcommand{\theHtheorem}{appendix.\Alph{section}.\arabic{theorem}}

\section{A bound for \texorpdfstring{$|\rho_\alpha(z)|$}{|rho\_alpha(z)|} on \texorpdfstring{\mbox{$|z|\le 1$}}{|z| <= 1}}
\label{sec::rho_estimate}
In this section, we provide some estimates on
$|\rho_\alpha(z)|$, where $\rho_\alpha(z)$ is the one-sided stable PDF
with $0<\alpha<1$, analytically continued to the complex plane. Denote $z=re^{i\theta}$, where the modulus $r\in [0,1]$ and the angle $|\theta|<\theta_{*}(1-\alpha)$, with $0<\theta_{*}<\pi/6$ a critical angle to be determined later. The inverse Laplace transform gives
\begin{equation}
    \label{eq::ILT}
    \rho_\alpha(z)=\frac{1}{2\pi i}\int_{C}e^{\Phi_{z}(s)}\mathrm{d}s,
\end{equation}
where 
\begin{equation}
    \label{eq::phase_original}
    \Phi_{z}(s)=sz-s^{\alpha},
\end{equation}
 and $C$ is an appropriate inversion contour. To estimate $|\rho_\alpha(z)|$, we proceed in the following steps:
\begin{enumerate}
    \item Find the saddle point of $\Phi_z(s)$. Transform $C$ into the (approximated) steepest descent contour. Specifically, a parabolic contour derived from the local and global properties of $\Phi_z(s)$.
    \item Analyze properties of the phase function on the parabolic contour.
    \item Based on the results above, provide an upper bound of $|\rho_\alpha(z)|$.
\end{enumerate}
We proceed according to these steps in sequence.

\subsection{The parabolic contour}
From \cref{eq::phase_original}, we take derivative of $\Phi_z(s)$, then
\begin{equation}
    \label{eq::Phi_deri}
    \Phi_z'(s)=z-\alpha s^{\alpha-1},
\end{equation}
which shows the saddle point is $s_c=(\alpha/z)^{1/(1-\alpha)}$. The polar representation of $s_c$ is provided by $s_c=\sigma e^{i\psi_0}$, where
\begin{equation}
    \label{eq::sigma_psi}
    \sigma=\alpha^{1/(1-\alpha)}r^{-1/(1-\alpha)},\quad \psi_0=-\frac{\theta}{1-\alpha}.
\end{equation}
Thus one has $\Phi_z(s_c)=-(1-\alpha)s_c^{\alpha}$ and $\Phi_z''(s_c)=\lambda s_c^{\alpha-2}$, $\lambda=\alpha(1-\alpha)$. Based on these results, one selects the parameterized contour as $s(u)=s_cw(u)$, where
\begin{equation}
    \label{eq::sd_quad}
     w(u)=1+iu-\eta u^2,\quad \eta=\frac{2-\alpha}{6}\in(\frac{1}{6},\frac{1}{3}),\quad u\in\mathbb{R}.
\end{equation}
Correspondingly, the phase and the parameter derivative have the expression as
\begin{equation}
    \label{eq::phase_para}
\Phi_z(s(u))=\sigma^\alpha e^{i\phi} (\alpha w(u) - w(u)^\alpha), \quad
s'(u)=\sigma e^{i\psi_0}(i-2\eta u).
\end{equation}
where $\phi=\alpha\psi_0$. Hence the integral in \cref{eq::ILT} becomes
\begin{equation}
    \label{eq::integral_transform}
\rho_\alpha(z)=\frac{1}{2\pi i}\int_{-\infty}^{\infty}
\exp\Big(\sigma^\alpha e^{i\phi}(\alpha w(u) - w(u)^\alpha)\Big)
e^{i\psi_0}\sigma\bigl(i-2\eta u\bigr)\mathrm{d}u.
\end{equation}
Define the main phase function as
\begin{equation}
    \label{eq::main_phase}
    F(u):=\Re\{e^{i\phi}(\alpha w(u)-w(u)^{\alpha})\},
\end{equation}
and one readily derives the upper bound
\begin{equation}
    \label{eq::rho_bound}
    |\rho_\alpha(z)|\le\frac{1}{2\pi}\int_{-\infty}^{\infty}
\exp\Big(\sigma^\alpha F(u)\Big)\cdot 
\sigma(1+2\eta |u|)\mathrm{d}u.
\end{equation}

\subsection{The global maximum property of \texorpdfstring{$F(u)$}{F(u)}}
\label{sec::global_maximum}
For the main phase function $F(u)$ defined in \cref{eq::main_phase}, Proposition~\ref{prop::global_maximum} characterizes its most essential property.
\begin{proposition}[Preliminary bound on the global maximum]\label{prop::global_maximum}
Under suitable restrictions on the contour angle $\theta$, the main phase function $F(u)$ is monotonic on both sides of $u=0$, and $F(0)$ is its unique maximum. That is, $F'(u)>0$ for all $u<0$, and $F'(u)<0$ for all $u>0$.
\end{proposition}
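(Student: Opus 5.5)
The plan is to compute $F'(u)$ in closed form, settle the unrotated case $\phi=0$ with a quantitative margin, and then treat $\phi\neq0$ as a small perturbation. The perturbation is controlled by the restriction $|\theta|<\theta_*(1-\alpha)$, which gives $|\phi|=\alpha|\theta|/(1-\alpha)<\alpha\theta_*$.

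\textbf{Step 1 (derivative and symmetry).} Since $w'(u)=i-2\eta u$,
\[
F'(u)=\alpha\,\Re\bigl\{e^{i\phi}\,(i-2\eta u)\bigl(1-w(u)^{\alpha-1}\bigr)\bigr\}.
\]
Write $P(u):=(i-2\eta u)(1-w(u)^{\alpha-1})=G(u)+iH(u)$. Then
\[
F'(u)/\alpha=\cos\phi\,G(u)-\sin\phi\,H(u).
\]
We have $w(-u)=\overline{w(u)}$ on the principal branch, and $w$ stays in $\Re w>0$ or the upper/lower half plane without crossing the negative axis, since $\Im w=u$. Hence $G$ is odd and $H$ is even. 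Replacing $u$ by $-u$ is therefore equivalent to replacing $\phi$ by $-\phi$. It suffices to prove $F'(u)<0$ for $u>0$, uniformly for $|\phi|<\alpha\theta_*$. The sufficient condition is
\[
|H(u)|\tan(\alpha\theta_*)<-G(u)\quad\text{for all } u>0.
\]

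\textbf{Step 2 (the case $\phi=0$, with a margin).} Write $w=\varrho e^{i\chi}$ with $\chi\in(0,\pi)$ for $u>0$, so that $w^{\alpha-1}=\varrho^{\alpha-1}e^{-i(1-\alpha)\chi}$. Then $G$ and $H$ are explicit trigonometric and power expressions in $(\varrho,\chi)$. I would treat three regimes.

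\emph{Small $u$.} Taylor expansion gives $1-w^{\alpha-1}=(1-\alpha)iu+O((1-\alpha)u^2)$. Hence $G=-(1-\alpha)u+O((1-\alpha)u^3)$ and $H=O((1-\alpha)u^2)$. Both carry the factor $1-\alpha$, because $1-w^{\alpha-1}$ vanishes linearly as $\alpha\to1$. So the ratio $|H|/|G|=O(u)$ is uniform in $\alpha$.

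\emph{Large $u$.} Here $w\sim-\eta u^2$ and $|w^{\alpha-1}|\to0$. This gives $G\approx-2\eta u$ and $H\approx1$, so again $|H|/|G|\to0$.

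\emph{Intermediate $u$.} I would aim for bounds of the form
\[
-G(u)\ge c\,(1-\alpha)\,u\min(1,u^{-?}\cdots),\qquad |H(u)|\le C(1-\alpha)\min(u^2,1)\,\cdots,
\]
with the sharper goal
\[
|H(u)|\le K\,|G(u)|\quad\text{for all } u>0,
\]
with $K$ independent of $\alpha\in(0,1)$. Such a $K$ would fix admissible $\theta_*$ through $\tan(\alpha\theta_*)\le\tan\theta_*<1/K$, consistent with the choice $\theta_*=\arctan(1/9)$, i.e.\ $K\le 9$. To get this I would use $\eta=(2-\alpha)/6$. It makes $\Re w=1-\eta u^2$, and the contour matches the steepest-descent curvature to second order. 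I would also use the elementary bounds $\sin((1-\alpha)\chi)\le(1-\alpha)\chi$ and $1-\varrho^{\alpha-1}\cos((1-\alpha)\chi)\ge\cdots$ to extract the common factor $(1-\alpha)$ where needed.

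\textbf{Step 3 (conclusion).} Combining the three regimes gives $G(u)<0$ and $|H(u)|\tan|\phi|<|G(u)|$ for $u>0$. Therefore $F'(u)<0$ for $u>0$, and by the symmetry of Step 1, $F'(u)>0$ for $u<0$. Thus $F(0)$ is the unique maximum.

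The main obstacle is the intermediate regime $u=O(1)$, where neither asymptotic expansion dominates. There one needs a uniform-in-$\alpha$ constant $K$, and in particular must handle the degeneration $\alpha\to1$ (both $G,H\propto1-\alpha$) and $\alpha\to0$ (where $w^{\alpha-1}\approx w^{-1}$ is explicit). I would first try to write $G$ and $H$ exactly in terms of $\varrho,\chi$ and bound the ratio by monotonicity in $\alpha$, reducing to the endpoint cases $\alpha\to0^+$ and $\alpha\to1^-$. If that fails, I would fall back on a rigorous interval-arithmetic check on a compact $(u,\alpha)$ grid.
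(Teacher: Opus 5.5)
\textbf{There is a genuine gap: the central estimate is never proved.}

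Your Step~1 reduction is correct, and it is essentially sharp. Write $w^{\alpha-1}=\mu e^{-i\beta}$ with $\mu=\varrho^{\alpha-1}\in(0,1)$ and $\beta=(1-\alpha)\delta\in(0,\pi)$. Then $G=-\mu\sin\beta-2\eta u(1-\mu\cos\beta)$ and $H=(1-\mu\cos\beta)-2\eta u\,\mu\sin\beta$. The condition $|H|\tan(\alpha\theta_*)<-G$ is equivalent to the proposition over the whole admissible range of $\phi$.

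Step~2 does not establish this condition. The intermediate-regime bounds are placeholders, and the endpoint expansions are not uniform in $\alpha$:
\begin{itemize}
\item The large-$u$ mechanism you use, $|w^{\alpha-1}|=\mu\to0$, only takes effect once $\log\varrho\gtrsim1/(1-\alpha)$.
\item For $\alpha$ near $1$ there is a range of $u$, extending to $u\sim e^{c/(1-\alpha)}$, on which $\mu\approx1$. On that range $G$ and $H$ both carry a factor $1-\alpha$, and your approximations $G\approx-2\eta u$, $H\approx1$ fail.
\item Even at fixed $\alpha>1/2$, $H\approx1-2\eta u\,\mu\sin\beta$ behaves like $-u^{2\alpha-1}$, not like $1$.
\item An interval-arithmetic check on a compact $(u,\alpha)$ box cannot reach the corner $\alpha\to1^-$, $u\to\infty$.
\end{itemize}

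\textbf{The missing idea is to track angles rather than look for a uniform ratio $K$.}

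Since $\mu<1$ and $\beta\in(0,\pi)$, both $1-\mu\cos\beta>0$ and $\mu\sin\beta>0$. Hence $\gamma:=\arg(1-\mu e^{-i\beta})\in(0,\pi/2)$. Also $i-2\eta u=\sqrt{1+k^2}\,e^{i(\pi/2+\tau)}$ with $\tau=\arctan(2\eta u)$. Therefore $F'(u)=-\alpha|P|\sin(\tau+\phi+\gamma)$. This is the paper's identity $F'/(\alpha\sqrt{1+k^2})=\mu\sin(S-\beta)-\sin S$ with $S=\phi+\tau$.

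Whenever $S\in[0,\pi/2]$ the sign of $F'$ is automatic (the paper's Case~B). So your ``intermediate regime'', which contains $u\in[1/(18\eta),\,9/(2\eta)]$, needs no estimate at all. Only two edge ranges require work, and in each $u$ is explicitly small or explicitly large:
\begin{itemize}
\item $S<0$. This forces $\phi<0$ and $u<\tan|\phi|/(2\eta)<1/3$; elementary Taylor-type bounds suffice (Case~A).
\item $S>\pi/2$. This forces $\phi>0$ and $u>\cot\phi/(2\eta)>27/2$, so $\varrho>239/4$. The paper reduces to $\mu\cos(\beta-\phi)<\cos\phi$, and then to $\log\varrho>\delta\tan\phi$ (Cases~C--D).
\end{itemize}
The last reduction works because the common factor $1-\alpha$ in $\log\mu=-(1-\alpha)\log\varrho$ and $\beta=(1-\alpha)\delta$ cancels. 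That cancellation handles $\alpha\to1^-$ uniformly, which is exactly where your approach had no mechanism.
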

By the symmetry \(F(-u;\phi)=F(u;-\phi)\), it suffices to prove
\(F'(u)<0\) for \(u>0\) and all admissible \(\phi\). The case \(u<0\)
then follows by replacing \(\phi\) with \(-\phi\). Before proving the global maximum property, we first consider the behavior of $F(u)$ as $|u|\rightarrow0$ and $|u|\rightarrow \infty$, described in Lemmas~\ref{lemma::u_small} and \ref{lemma::u_large}.

\begin{lemma}[Small-\(|u|\) Gaussian control]\label{lemma::u_small}
\begin{equation}
F(u)=-(1-\alpha)\cos\phi-\frac{\lambda}{2}\cos\phi u^2+O(u^4)\quad (|u|\to 0).
\end{equation}
Hence \(u=0\) is a strict local maximizer of \(F(u)\).
\end{lemma}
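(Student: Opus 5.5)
The plan is a direct Taylor expansion of $G(u):=\alpha w(u)-w(u)^\alpha$ about $u=0$, followed by taking the real part after multiplying by $e^{i\phi}$.

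Since $w(u)=1+\zeta$ with $\zeta=iu-\eta u^2$ small, the principal branch of $(1+\zeta)^\alpha$ is analytic for $|\zeta|<1$. The binomial series therefore gives a convergent expansion in $u$:
\[
(1+\zeta)^\alpha=1+\alpha\zeta+\tfrac{\alpha(\alpha-1)}{2}\zeta^2+\tfrac{\alpha(\alpha-1)(\alpha-2)}{6}\zeta^3+O(\zeta^4).
\]
Substituting this, the linear terms cancel against $\alpha w(u)=\alpha+\alpha\zeta$. This leaves
\[
G(u)=-(1-\alpha)+\tfrac{\lambda}{2}\zeta^2-\tfrac{\alpha(\alpha-1)(\alpha-2)}{6}\zeta^3+O(\zeta^4),
\]
with $\lambda=\alpha(1-\alpha)$.

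Next I would collect powers of $u$, using $\zeta^2=-u^2-2i\eta u^3+O(u^4)$ and $\zeta^3=-iu^3+O(u^4)$:
\begin{itemize}
\item The $u^2$ coefficient is $-\lambda/2$.
\item The $u^3$ coefficient is $-i\lambda\eta+i\,\alpha(\alpha-1)(\alpha-2)/6 = -i\lambda\bigl(\eta-\tfrac{2-\alpha}{6}\bigr)$.
\end{itemize}
This $u^3$ coefficient vanishes precisely by the choice $\eta=(2-\alpha)/6$ in \cref{eq::sd_quad}. This is the key observation, and it explains the choice of $\eta$. Hence
\[
G(u)=-(1-\alpha)-\tfrac{\lambda}{2}u^2+O(u^4),
\]
with real leading coefficients.

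Multiplying by $e^{i\phi}$ and taking real parts then yields $F(u)=-(1-\alpha)\cos\phi-\tfrac{\lambda}{2}\cos\phi\,u^2+\Re\{e^{i\phi}O(u^4)\}$. The remainder is $O(u^4)$ uniformly, since $|e^{i\phi}|=1$.

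For the strict local maximum, I would check that $\cos\phi>0$. Here $\phi=\alpha\psi_0=-\alpha\theta/(1-\alpha)$ and $|\theta|<\theta_*(1-\alpha)$, so $|\phi|<\alpha\theta_*<\pi/6$. Thus $\cos\phi>\sqrt3/2$, and the quadratic coefficient $-\tfrac{\lambda}{2}\cos\phi$ is strictly negative. It follows that $F(u)<F(0)$ for all sufficiently small $u\neq0$.

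The main obstacle is modest: the cancellation at cubic order. Without it, the real part of $e^{i\phi}c_3u^3$ would be nonzero for $\phi\ne0$ and would only give an $O(u^3)$ remainder. That would still suffice for a local maximum, but not for the stated $O(u^4)$ form. The care needed is therefore in verifying the $\zeta^3$ coefficient and the contribution of $\zeta^2$ at order $u^3$.
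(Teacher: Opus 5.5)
Your proposal is correct and follows the paper's route. The paper simply records $G(0)=-(1-\alpha)$, $G'(0)=0$, $G''(0)=-\lambda$, $G^{(3)}(0)=0$ and invokes Taylor's theorem; your binomial-series computation verifies exactly these values, including the cubic cancellation forced by $\eta=(2-\alpha)/6$, and your check that $\cos\phi>0$ (from $|\phi|<\alpha\theta_*<\pi/6$) supplies the step the paper leaves implicit for the strict local maximum.
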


\begin{proof}
Let $G(u):=\alpha w(u)-w(u)^\alpha$. On the principal branch, we have
\be
G(0)=-(1-\alpha),\quad G'(0)=0,\quad
G''(0)=-\lambda,\quad G^{(3)}(0)=0.
\ee
The result follows from the Taylor expansion.
\end{proof}

\begin{lemma}[Large-\(|u|\) quadratic dominance]\label{lemma::u_large}
For sufficiently large \(|u|\),
\be
F(u)\le-\frac{\alpha \eta}{2}\cos\phi\cdot u^2.
\ee
\end{lemma}

\begin{proof}
This simply follows from the facts that $0<\alpha<1$, $\alpha w(u)$ dominates $w(u)^\alpha$ for large $|u|$,
and $-\eta u^2$ dominates in $w(u)$ for large $|u|$.
\end{proof}

Now we introduce the polar representation $w(u):=\varrho(u)e^{i\delta(u)}$, with $\varrho(u)=|w(u)|$ and $\delta(u)=\arg w(u)\in (0,\pi)$. Lemma~\ref{lem:delta-increasing} shows that the argument of $w(u)$ is monotonically increasing for $u>0$.

\begin{lemma}[Argument of $w(u)$ is increasing]\label{lem:delta-increasing}
\[
\delta'(u)=\frac{1+\eta u^2}{(1-\eta u^2)^2+u^2}>0,
\quad\forall u>0,
\]
and
\[
\delta\!\left(\frac{1}{\sqrt \eta}\right)=\frac{\pi}{2}.
\]
\end{lemma}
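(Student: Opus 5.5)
Since $w(u)=1+iu-\eta u^2$ has real part $1-\eta u^2$ and imaginary part $u$, the plan is to compute $\delta'(u)$ directly from the formula for the derivative of the argument of a smooth nonvanishing curve. I first check that $w(u)\neq0$ for all real $u$: the imaginary part vanishes only at $u=0$, where $w=1$. So a continuous branch of $\delta(u)=\arg w(u)$ exists with $\delta(0)=0$. For $u>0$ the imaginary part is positive, so this branch lies in $(0,\pi)$, matching the convention stated before the lemma.

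For the derivative I use $\delta'(u)=\Im\{w'(u)/w(u)\}=\bigl(\Re w\,\Im w'-\Im w\,\Re w'\bigr)/|w|^2$. Here $w'(u)=i-2\eta u$, so $\Re w'=-2\eta u$ and $\Im w'=1$. The numerator is then $(1-\eta u^2)\cdot1-u\cdot(-2\eta u)=1+\eta u^2$. The denominator is $|w|^2=(1-\eta u^2)^2+u^2$. This gives exactly the stated formula. Positivity is immediate: $\eta>0$ makes the numerator at least $1$, and the denominator is a sum of squares that does not vanish, since $w\neq0$. In fact $\delta'>0$ for every real $u$, not only $u>0$.

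For the value at $u=1/\sqrt\eta$, note that $w(1/\sqrt\eta)=1+i/\sqrt\eta-1=i/\sqrt\eta$, which lies on the positive imaginary axis. Its argument in $(0,\pi)$ is therefore $\pi/2$. This agrees with the continuous branch: $\delta$ starts at $0$, increases, and $\Re w>0$ on $(0,1/\sqrt\eta)$, so $\delta$ cannot wrap around before reaching $\pi/2$.

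There is no real obstacle here. The only point needing care is justifying that the continuous branch coincides with the principal argument in $(0,\pi)$, which the sign of $\Im w$ settles.
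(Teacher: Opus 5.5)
Your proposal is correct and follows the paper's proof: both compute $\delta'(u)=\Im\{w'(u)/w(u)\}$ (the paper by multiplying numerator and denominator by $\overline{w(u)}$) and evaluate $w(1/\sqrt\eta)=i/\sqrt\eta$ to obtain the value $\pi/2$. Your checks that $w$ never vanishes and that the continuous branch stays in $(0,\pi)$ make explicit what the paper leaves implicit.
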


\begin{proof}
Choose the continuous branch of $\log$ along the path $u\mapsto w(u)$ and note
$\delta(u)=\mathrm{Im}\log w(u)$. Then
\[
\delta'(u)=\mathrm{Im}\left(\frac{w'(u)}{w(u)}\right)
=\mathrm{Im}\left(\frac{i-2\eta u}{1+i u-\eta u^2}\right)
=\frac{1+\eta u^2}{(1-\eta u^2)^2+u^2}>0,
\]
after multiplying numerator and denominator by $\overline{w(u)}$ and taking imaginary parts.

For the stated value at $u=1/\sqrt \eta$,
\[
w\left(\frac{1}{\sqrt \eta}\right)=1+i\frac{1}{\sqrt \eta}-\eta\cdot\frac{1}{\eta}
= i\frac{1}{\sqrt \eta},
\]
which is purely imaginary with positive imaginary part. Hence
\[
\delta(1/\sqrt \eta)=\arg\big(i/\sqrt \eta\big)=\pi/2.
\]
\end{proof}
Furthermore, with the polar representation of $w(u)$, the derivative of $F(u)$ is then
\begin{equation}    \label{eq::F_derivative}
F'(u)=\Re\Big\{\alpha e^{i\phi}(1-w(u)^{\alpha})w'(u)\Big\}.
\end{equation}
Hence we denote
\begin{equation}
    \label{eq::tangent}
    k(u)=2\eta u>0,\quad \tau(u)=\arctan(k(u))\in[0,\frac{\pi}{2}),\quad S(u)=\phi+\tau(u)
\end{equation}
as the positive tangent slope, tangent angle, and the transport angle of $w'(u)$, respectively, since $w'(u)=i-2\eta u$. We also introduce auxiliary exponents of modulus and argument that
\begin{equation}
    \label{eq::auxiliary}
\mu(u)=\varrho(u)^{\alpha-1},\quad \beta(u)=(1-\alpha)\delta(u)\in (0,\pi),
\end{equation}
which indicates that
\begin{equation}
    \label{eq::F_expression}
    \frac{F'(u)}{\alpha\sqrt{1+k(u)^2}}
=\mu(u)\sin\big(S(u)-\beta(u)\big)-\sin S(u).
\end{equation}
Subsequent proofs concerning monotonicity will rely on the expression provided in \cref{eq::F_expression}, requiring a detailed analysis of the ranges and quantitative relationships of $\mu(u)$, $S(u)$, and $\beta(u)$. Lemma~\ref{lem:rho-mu} provides the bound of $\varrho(u)$ and $\mu(u)$.

\begin{lemma}[Basic size bounds for $\varrho$ and $\mu$]\label{lem:rho-mu}
With $\varrho(u)=\sqrt{(1-\eta u^2)^2+u^2}$, one has
\[
\varrho(u)>1\quad\text{for all }u>0,\quad
\mu(u):=\varrho(u)^{\alpha-1}=\varrho(u)^{-(1-\alpha)}\in(0,1).
\]
\end{lemma}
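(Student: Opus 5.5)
The plan is to compute $\varrho(u)^2$ explicitly and show that it exceeds $1$ for every $u>0$. Once that is done, the statement about $\mu$ follows at once from the sign of the exponent.

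\emph{Step 1.} Since $w(u)=(1-\eta u^2)+iu$, we have
\[
\varrho(u)^2=(1-\eta u^2)^2+u^2=1+(1-2\eta)u^2+\eta^2u^4 .
\]

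\emph{Step 2.} By \cref{eq::sd_quad}, $\eta=(2-\alpha)/6\in(1/6,1/3)$. Hence $1-2\eta>1/3>0$. For $u>0$ both $(1-2\eta)u^2$ and $\eta^2u^4$ are strictly positive, so $\varrho(u)^2>1$ and therefore $\varrho(u)>1$.

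\emph{Step 3.} Because $0<\alpha<1$, the exponent $\alpha-1=-(1-\alpha)$ is strictly negative. The map $x\mapsto x^{-(1-\alpha)}$ is strictly decreasing on $(0,\infty)$ and equals $1$ at $x=1$. With $\varrho(u)>1$ this gives $0<\mu(u)=\varrho(u)^{-(1-\alpha)}<1$.

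There is no real obstacle here. The only point requiring care is the positivity of the coefficient $1-2\eta$, which relies on the specific choice $\eta<1/2$ made in \cref{eq::sd_quad}.
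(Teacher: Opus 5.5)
Your proof is correct and takes essentially the same route as the paper: both expand $\varrho(u)^2=1+(1-2\eta)u^2+\eta^2u^4$ and use $1-2\eta>0$. The paper instead shows $\frac{\mathrm{d}}{\mathrm{d}u}\varrho(u)^2=2u(1-2\eta+2\eta^2u^2)>0$ with $\varrho(0)=1$, which also records that $\varrho$ is strictly increasing on $(0,\infty)$ (used later in Case D); your expansion gives that monotonicity just as directly, since both nonconstant terms increase in $u>0$ with positive coefficients.
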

\begin{proof}
Since
\[
\varrho(u)^2=1+(1-2\eta)\,u^2+\eta^2 u^4,\qquad \frac{\mathrm{d}}{\mathrm{d}u}\varrho(u)^2
=2u\bigl(1-2\eta+2\eta^2 u^2\bigr)>0\quad(u>0),
\]
and $\varrho(0)=1$, it follows that $\varrho(u)>1$ for $u>0$. Hence $\mu(u)\in(0,1)$.
\end{proof}
For the transport angle $S(u)$, Lemmas~\ref{lem:u0-delta-pi-over-2} and \ref{lem:second-quadrant} show the geometry of the case when $S(u)\ge \pi/2$.

\begin{lemma}[Geometry at $S=\pi/2$]\label{lem:u0-delta-pi-over-2}
Assume that
\begin{equation}
    \label{eq::phi_assume}
    0<\phi=\alpha\psi_0=-\frac{\alpha\theta}{1-\alpha}\le \alpha\theta_*.
\end{equation}
Then there exists a unique $u_0>0$ with $S(u_0)=\pi/2$, characterized by
\[
2\eta u_0=\cot\phi.
\]
Furthermore, under the standing constraints $\eta\in(1/6,1/3)$ and
$|\phi|\le \pi\alpha/6$, one has
\[
u_0\ge\frac{1}{\sqrt \eta},\quad \delta(u)\ge\frac{\pi}{2},\quad\forall u\ge u_0.
\]
\end{lemma}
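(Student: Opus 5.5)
The argument rests on the explicit form of the transport angle, $S(u)=\phi+\arctan(2\eta u)$. It is strictly increasing on $u>0$, with $S(0)=\phi$ and $S(u)\to\phi+\pi/2$ as $u\to\infty$. Since $0<\phi<\pi/2$ by \cref{eq::phi_assume} (indeed $\phi\le\alpha\theta_*<\pi/6$), the intermediate value theorem gives a solution of $S(u_0)=\pi/2$, and strict monotonicity makes it unique. Rearranging, the equation reads $\arctan(2\eta u_0)=\pi/2-\phi$, that is, $2\eta u_0=\tan(\pi/2-\phi)=\cot\phi$. This proves the characterization.

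For the lower bound, $u_0\ge 1/\sqrt\eta$ is equivalent to $\cot\phi\ge 2\sqrt\eta$. The left side is decreasing in $\phi$, so it suffices to check the worst case $\phi=\pi\alpha/6<\pi/6$. There $\cot\phi>\cot(\pi/6)=\sqrt3$. On the other side, $\eta<1/3$ gives $2\sqrt\eta<2/\sqrt3\approx1.155<\sqrt3$. Hence $\cot\phi>2\sqrt\eta$, and so $u_0>1/\sqrt\eta$.

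For the final claim, Lemma~\ref{lem:delta-increasing} shows that $\delta$ is strictly increasing on $u>0$ with $\delta(1/\sqrt\eta)=\pi/2$. Therefore every $u\ge u_0\ge 1/\sqrt\eta$ satisfies $\delta(u)\ge\pi/2$.

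There is no real obstacle. The only care needed is in the constant comparison $2/\sqrt3<\sqrt3$, which uses the bound $\eta<1/3$ (that is, $\alpha>0$) together with $|\phi|\le\pi\alpha/6$. Note also that the assumed bound $\phi\le\alpha\theta_*$, with $\theta_*<\pi/6$, is stronger than the constraint $|\phi|\le\pi\alpha/6$ that the argument actually uses.
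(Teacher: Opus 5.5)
Your proof is correct and follows essentially the same route as the paper: both reduce $u_0\ge 1/\sqrt\eta$ to the constant comparison $\cot\phi\ge 2\sqrt\eta$ (the paper phrases it as $\tan\phi\le 1/\sqrt3\le 1/(2\sqrt\eta)$), then use Lemma~\ref{lem:delta-increasing}, i.e.\ that $\delta$ is increasing with $\delta(1/\sqrt\eta)=\pi/2$. Your intermediate-value and strict-monotonicity argument for the existence, uniqueness and characterization of $u_0$ supplies a step the paper leaves implicit.
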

\begin{proof}
Since $0<\phi\le \pi/6$, one has $\tan\phi\le \tan(\pi/6)=1/\sqrt3$. Note that $\eta\le 1/3$, one has
\[
\frac{1}{\sqrt3}\ \le\ \frac{1}{2\sqrt \eta}\quad\Longrightarrow\quad \tan\phi\le \frac{1}{2\sqrt \eta}\quad\Longrightarrow\quad \cot\phi\ge 2\sqrt \eta.
\]
Therefore,
\begin{equation}
    \label{eq::u0_estimate}
    u_0=\frac{\cot\phi}{2\eta}\ge \frac{2\sqrt \eta}{2\eta}=\frac{1}{\sqrt \eta}.
\end{equation}
By Lemma~\ref{lem:delta-increasing},
$\delta$ is increasing and $\delta(1/\sqrt \eta)=\pi/2$, so $\delta(u)\ge \pi/2$ for $u\ge u_0$.
\end{proof}

\begin{lemma}[Second quadrant when $S(u)>\pi/2$]\label{lem:second-quadrant}
If $u>0$ and $S(u)>\pi/2$, then $w(u)=1+iu-\eta u^2$ lies in the second quadrant. In particular,
\[
\delta(u) \in\ \Big(\frac{\pi}{2},\pi\Big),
\]
\end{lemma}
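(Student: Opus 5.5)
Since $S(u)=\phi+\tau(u)$ with $\tau$ strictly increasing (because $k(u)=2\eta u$ is increasing and $\arctan$ is increasing), the condition $S(u)>\pi/2$ is equivalent to $u>u_0$, where $u_0$ is the unique point from Lemma~\ref{lem:u0-delta-pi-over-2} with $2\eta u_0=\cot\phi$. I will therefore work with $u>u_0$ and show directly that $\Re w(u)<0$ and $\Im w(u)>0$.

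The imaginary part is immediate: $\Im w(u)=u>0$ for every $u>0$, so $w(u)$ lies in the open upper half-plane and $\delta(u)\in(0,\pi)$. For the real part, Lemma~\ref{lem:u0-delta-pi-over-2} gives $u_0\ge 1/\sqrt\eta$, hence $u>u_0\ge 1/\sqrt\eta$. Then $\eta u^2>1$, so $\Re w(u)=1-\eta u^2<0$. Together these place $w(u)$ in the open second quadrant, and thus $\delta(u)\in(\pi/2,\pi)$.

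Alternatively, I could cite Lemma~\ref{lem:delta-increasing}: $\delta$ is strictly increasing with $\delta(1/\sqrt\eta)=\pi/2$, so $u>1/\sqrt\eta$ gives $\delta(u)>\pi/2$. Combined with $\delta<\pi$ from $\Im w>0$, this yields the same conclusion.

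The only point needing care is the strict inequality in the case $u_0=1/\sqrt\eta$. This is handled because the hypothesis $S(u)>\pi/2$ is strict, so $u>u_0$ strictly. I also need to confirm that the standing assumption \eqref{eq::phi_assume} is in force, so that $0<\phi\le\pi/6$ and Lemma~\ref{lem:u0-delta-pi-over-2} applies. This holds because $\theta_*<\pi/6$ and $\alpha<1$. Beyond this, I expect no real obstacle; the argument is elementary.
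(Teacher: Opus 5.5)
Your proof is correct and is essentially the paper's argument; the only difference is that you cite Lemma~\ref{lem:u0-delta-pi-over-2} for $u_0\ge 1/\sqrt\eta$, whereas the paper redoes the computation $\cot\phi\ge\sqrt3\ge 2\sqrt\eta$ inline. One correction: $\phi>0$ is not a standing assumption and does not follow from $\theta_*<\pi/6$ and $\alpha<1$, which only bound $|\phi|$; it follows from the hypothesis itself, because $\tau(u)=\arctan(2\eta u)<\pi/2$ means $S(u)>\pi/2$ forces $\phi>0$, and this is how the paper obtains it.
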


\begin{proof}
Since $S(u)=\phi+\tau(u)$ with $\tau(u)=\arctan k(u)$ strictly increasing in $k$, the condition
$S(u)>\pi/2$ is equivalent to
\[
\tau(u)>\frac{\pi}{2}-\phi
\quad\Longleftrightarrow\quad
k(u)>\cot\phi.
\]
Because $S(u)>\pi/2$ forces $\phi>0$ with $0<\phi\le \pi/6$, one has $\cot\phi\ge\sqrt{3}$.
Using $\eta\le1/3$, one gets $\sqrt{3}\ge 2\sqrt \eta$, hence
\[
k(u)>\cot\phi\ \ge\ 2\sqrt \eta \quad\Longrightarrow\quad u>1/\sqrt \eta.
\]
Since
\[
w(u)=(1-\eta u^2)+iu,
\]
one has $1-\eta u^2<0$ while $u>1/\sqrt \eta$. Thus $\Re \{w(u)\}<0$ and $\Im \{w(u)\}>0$, so $w(u)$
lies in the second quadrant and therefore $\delta(u)\in(\pi/2,\pi)$.
\end{proof}

Based on all the properties discussed above, we refine the angular constraints of Proposition~\ref{prop::global_maximum} and provide a proof that $F(u)$ has a unique global maximum at $F(0)$. The detailed description is given in Theorem~\ref{thm:Fprime-negative}.

\begin{theorem}[Strict monotonicity of the main phase $F(u)$]\label{thm:Fprime-negative}
Let $\cstar=1/9$ and $\thetastar=\arctan \cstar$.
Suppose that $|\theta|\le \thetastar(1-\alpha)$ and thus $|\phi|\le \alpha\thetastar$.
For $u>0$,
\[
F'(u)<0.
\]
Consequently $F$ is strictly decreasing on $(0,\infty)$, strictly increasing on $(-\infty,0)$, and attains its unique global maximum at $u=0$.
\end{theorem}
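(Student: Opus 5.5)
We work throughout with the representation \cref{eq::F_expression}. Since $\alpha\sqrt{1+k(u)^2}>0$, the claim $F'(u)<0$ for $u>0$ is equivalent to
\[
\Psi(u):=\mu(u)\sin\bigl(S(u)-\beta(u)\bigr)-\sin S(u)<0 .
\]
By the symmetry $F(-u;\phi)=F(u;-\phi)$ noted after Proposition~\ref{prop::global_maximum}, it suffices to treat $u>0$ and every $\phi$ with $|\phi|\le\alpha\thetastar$. The basic inputs are:
\begin{itemize}
\item $\mu\in(0,1)$, by Lemma~\ref{lem:rho-mu};
\item $\beta=(1-\alpha)\delta\in(0,(1-\alpha)\pi)$ with $\delta$ increasing, by Lemma~\ref{lem:delta-increasing};
\item $\tau(u)=\arctan(2\eta u)$ is increasing, so $S=\phi+\tau$ is increasing.
\end{itemize}
The proof then splits into three regimes according to the value of $S(u)$.

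\textbf{Regime 1: $0<S\le\pi/2$.} Here $\sin S>0$ and $S-\beta\in(S-\pi,S)$. If $S-\beta\ge-\pi/2$, then monotonicity of $\sin$ on $[-\pi/2,\pi/2]$ gives $\sin(S-\beta)<\sin S$. If instead $S-\beta<-\pi/2$, then $\sin(S-\beta)<0<\sin S$. In either case $\mu<1$ gives $\Psi<0$ strictly. This settles all $u$ with $S(u)\in(0,\pi/2]$.

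\textbf{Regime 2: $S\le0$.} This occurs only when $\phi<0$, on the short interval $0<u\le\tan|\phi|/(2\eta)\le 3\tan(\alpha\thetastar)/2\le 1/6$. Now both terms of $\Psi$ are small, and we need genuine cancellation. We split
\[
\Psi=(\mu-1)\sin(S-\beta)+\bigl[\sin(S-\beta)-\sin S\bigr]
=(1-\mu)\,\lvert\sin(S-\beta)\rvert-2\cos\!\bigl(S-\tfrac{\beta}{2}\bigr)\sin\tfrac{\beta}{2}.
\]
Here we used that $S-\beta<0$ and that $S-\beta>-\pi/2$ on this interval. The plan is to show that the negative term dominates, via the following quantitative bounds valid for $u\le1/6$:
\begin{itemize}
\item From $\varrho^2=1+(1-2\eta)u^2+\eta^2u^4$, we get $1-\mu\le(1-\alpha)(\varrho-1)\le(1-\alpha)u^2$.
\item From Lemma~\ref{lem:delta-increasing}, $\delta'\ge c>0$ on $[0,1/6]$, so $\delta\ge cu$ and hence $\sin(\beta/2)\ge c'(1-\alpha)u$.
\item $\cos(S-\beta/2)\ge\cos(1/4)$, and $\lvert\sin(S-\beta)\rvert\le|\phi|+\beta\le\tfrac19+(1-\alpha)u$.
\end{itemize}
With these, the negative term is of order $(1-\alpha)u$ while the positive term is of order $(1-\alpha)u^2$. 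Since $u\le1/6$, the negative term wins. Lemma~\ref{lemma::u_small}, which gives $F'(u)\approx-\lambda u\cos\phi$, serves as a consistency check on the constants.

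\textbf{Regime 3: $S>\pi/2$.} This is the main obstacle. Here $u>u_0=\cot\phi/(2\eta)$ and $\phi>0$. By Lemma~\ref{lem:second-quadrant} we have $\delta\in(\pi/2,\pi)$. Also $S\in(\pi/2,\pi/2+\phi)$, so $\sin S\ge\cos\phi$ and $\cos S\ge-\sin\phi$.

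The difficulty is that $\beta\ge(1-\alpha)\pi/2$ can be much smaller than $2\phi$ when $\alpha\to1$. So $\sin(S-\beta)\le\sin S$ may fail, and the factor $\mu$ must do the work. First expand
\[
\sin(S-\beta)=\sin S\cos\beta-\cos S\sin\beta\le\sin S+\sin\phi\,\beta .
\]
Therefore it suffices to show
\[
\mu\bigl(1+\beta\tan\phi\bigr)<1 ,
\qquad\text{i.e.}\qquad
1-\varrho^{-x}>x\,\pi\tan\phi\,\bigl(1+x\pi\tan\phi\bigr)^{-1},
\quad x:=1-\alpha .
\]
Now $\varrho\ge\Im w=u>u_0\ge3\cot\phi/2\ge 27/2$, using $\eta\le1/3$ and $\tan\phi\le\cstar=1/9$. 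By concavity of $x\mapsto1-\varrho^{-x}$ on $[0,1]$,
\[
1-\varrho^{-x}\ge x\bigl(1-\tfrac{2}{27}\bigr),
\]
whereas the right-hand side is at most $x\pi/9\approx0.35x$. This proves $\Psi<0$. This is precisely where the choice $\cstar=1/9$ enters, and the step I expect to need the most care is checking that the constants close uniformly in $\alpha$.

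\textbf{Conclusion.} Combining the three regimes gives $F'<0$ on $(0,\infty)$. The reflection symmetry then gives $F'>0$ on $(-\infty,0)$, so $F$ has its unique global maximum at $u=0$.
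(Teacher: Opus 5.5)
Your proof is correct and follows the paper's skeleton. Your Regimes 2, 1 and 3 are the paper's Cases A, B and C$\cup$D, all read off from \cref{eq::F_expression} together with the reflection $F(-u;\phi)=F(u;-\phi)$.

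The real difference is in the region $S>\pi/2$. The paper first separates $\beta\ge\pi/2$ (Case C, using $\sin(S-\beta)\le 1/2$ and $\sin S\ge\sqrt3/2$). For $\beta<\pi/2$ it reduces to $\mu\cos(\beta-\phi)<\cos\phi$, splits again on $\beta\gtrless 2\phi$, and finishes with $\log\varrho(u_0)\ge\log(239/4)>\pi/9$. Your bound $\sin(S-\beta)\le\sin S+\beta\sin\phi$ holds for every $\beta\in(0,\pi)$, and concavity of $x\mapsto 1-\varrho^{-x}$ with $\varrho\ge u>27/2$ then closes the argument in one step. The mechanism is the same: $\varrho$ is large there, so $1-\mu$ is of order $1-\alpha$ and beats $\beta\tan\phi\le(1-\alpha)\pi/9$. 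Your version simply needs one case instead of three.

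For $S\le 0$, your sum-to-product splitting replaces the paper's factorization $-\cos S\,J(u)$. The competing orders are the same, $(1-\alpha)u$ against $(1-\alpha)u^2$.

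There is one slip to fix. Since $\eta=(2-\alpha)/6>1/6$, you have $1/(2\eta)=3/(2-\alpha)$, which tends to $3$ (not $3/2$) as $\alpha\to1$. So the $S\le0$ interval is $u\le 3\tan(\alpha\theta_*)/(2-\alpha)<1/3$, matching the paper's $u_*<1/3$, not $u\le 1/6$.

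The argument still works, but you should write the constants out on $(0,1/3)$ rather than just saying the negative term wins:
\begin{itemize}
\item The positive term is at most $(1-\alpha)u^2(\tfrac19+u)<\tfrac{4}{27}(1-\alpha)u$.
\item For the negative term, use $|S-\beta/2|<1/4$, $\beta\ge(1-\alpha)\arctan u\ge\tfrac{9}{10}(1-\alpha)u$, and $\sin(\beta/2)\ge\beta/\pi$. These give a negative term of at least $\tfrac{9\cos(1/4)}{5\pi}(1-\alpha)u>0.55\,(1-\alpha)u$.
\end{itemize}
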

Proving Theorem~\ref{thm:Fprime-negative} requires a detailed discussion of the phase relationship between the two angles $S(u)$ and $\beta(u)$. Recall that $S(u)=\phi+\tau(u)\in[-\theta_{*},\pi/2+\theta_{*})$ and $\beta(u)=(1-\alpha)\delta(u)\in (0,\pi)$. For convenience, we split the whole phase diagram into four parts, which is clearly shown in \cref{fig::schema}. Specifically, the four cases are:
\begin{itemize}
    \item \textbf{Case A.} $S(u)< 0$.
    \item \textbf{Case B.} $0\le S(u)\le  \pi/2$.
    \item \textbf{Case C.} $S(u)>\pi/2$, and $\beta(u)\ge\pi/2$.
    \item \textbf{Case D.} $S(u)>\pi/2$, and $0<\beta(u)<\pi/2$.
\end{itemize}
\begin{figure}[!t]
    \centering
    \includegraphics[width=0.6\textwidth]{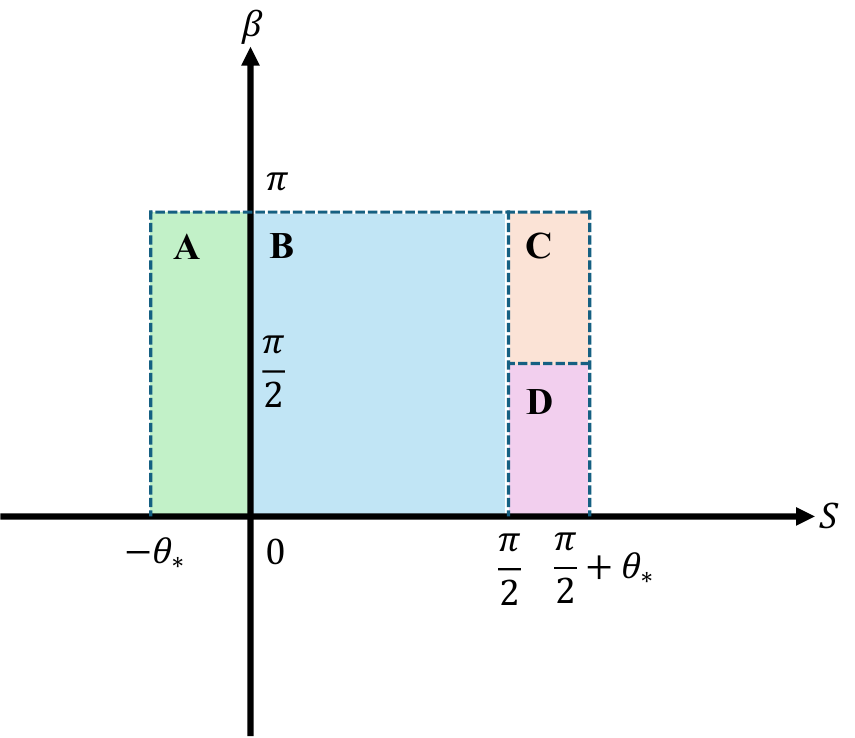}
    \caption{The $S$--$\beta$ phase diagram, divided into four regions, each shown in a different color.}
    \label{fig::schema}
\end{figure}

The following Propositions~\ref{prop::A}-\ref{prop::D} provide the whole proof process of Theorem~\ref{thm:Fprime-negative}.

\begin{proposition}[Case A]\label{prop::A}
If $S(u)<0$, then $F'(u)<0$.
\end{proposition}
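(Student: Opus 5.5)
We work directly from the representation \cref{eq::F_expression}. Since $\alpha\sqrt{1+k(u)^2}>0$, it suffices to show
\[
\mu(u)\sin\bigl(S(u)-\beta(u)\bigr)-\sin S(u)<0 .
\]
Write $a:=-S(u)>0$. Because $\sin$ is odd, the target inequality is equivalent to
\[
\mu(u)\sin\bigl(a+\beta(u)\bigr)>\sin a .
\]
Heuristically this should hold because $\beta>0$ pushes the argument up, while $\mu<1$ by Lemma~\ref{lem:rho-mu} only costs a second-order amount in $u$.

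\textbf{Step 1: confine $u$.} The condition $S(u)<0$ means $\tau(u)<-\phi$. This forces $\phi<0$ with $|\phi|\le\alpha\thetastar<\thetastar$, so $0<a<\thetastar$. It also gives $2\eta u=\tan\tau(u)<\tan\thetastar=\cstar=1/9$. Since $\eta>1/6$, we get $u<1/(18\eta)<1/3$. On this range $1-\eta u^2>0$, so $\delta(u)=\arctan\bigl(u/(1-\eta u^2)\bigr)\ge\arctan u\ge u-u^3/3$. Moreover
\[
0<\beta(u)=(1-\alpha)\delta(u)\le\delta(u)\le\arctan\bigl(u/(1-\eta u^2)\bigr)<0.4 .
\]
Hence $a+\beta<\pi/2$, and $\cos(a+\beta)$ is bounded below by an absolute constant (about $0.9$).

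\textbf{Step 2: split the difference.} We write
\[
\mu\sin(a+\beta)-\sin a=\bigl[\sin(a+\beta)-\sin a\bigr]-(1-\mu)\sin(a+\beta).
\]
For the first bracket, concavity of $\sin$ on $[0,\pi/2]$ gives $\sin(a+\beta)-\sin a\ge\beta\cos(a+\beta)\ge(1-\alpha)\,c_1u$, with an absolute $c_1>0$ (e.g.\ $0.8$). For the loss term we use $1-x^{-s}\le s\log x$ for $x\ge1$, together with $\log\varrho\le(\varrho^2-1)/2$ and the formula for $\varrho^2$ from Lemma~\ref{lem:rho-mu}. This yields
\[
1-\mu\le(1-\alpha)\tfrac12\bigl[(1-2\eta)u^2+\eta^2u^4\bigr]\le(1-\alpha)c_2u^2,
\]
with $c_2\le 1/3$ on $u<1/3$. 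Bounding $\sin(a+\beta)\le1$, the difference is at least $(1-\alpha)u\,(c_1-c_2u)$. This is strictly positive for $0<u<1/3$, which proves $F'(u)<0$.

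\textbf{Main obstacle.} The only delicate point is Step 1. We must extract from $S<0$ that $u$ is small, uniformly in $\alpha$, and make the constants in $\beta\gtrsim(1-\alpha)u$ and $1-\mu\lesssim(1-\alpha)u^2$ explicit. The common factor $(1-\alpha)$ must appear on both sides so that the comparison does not degenerate as $\alpha\to1^-$. The small angle $\thetastar=\arctan(1/9)$ makes all these constants comfortable.
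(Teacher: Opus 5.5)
Your proof is correct. Its skeleton is the paper's: both arguments use $\tan\thetastar=\cstar=1/9$ and $\eta>1/6$ to confine Case~A to $0<u<1/3$. Both then show that a gain of order $(1-\alpha)u$ dominates a loss of order $(1-\alpha)u^2$, and the common factor $(1-\alpha)$ keeps the comparison uniform as $\alpha\to1^-$.

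Where you differ is the algebraic split of \cref{eq::F_expression}. The paper expands it as $\sin S(\mu\cos\beta-1)-\mu\cos S\sin\beta$, the same expansion it reuses in Cases~B and~D. It then factors out $-\cos S$ and proves $J=\mu\sin\beta-(1-\mu\cos\beta)\zeta>0$ with $\zeta=|\tan S|\le\cstar$. It bounds $\mu$ by Bernoulli's inequality, $\sin\beta$ by $\sin x\ge 2x/\pi$, and $\cos\beta$ by $1-x^2/2$. The smallness of the angle is therefore used twice: once to confine $u$, and once as the factor $\zeta\le1/9$ on the loss.

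You instead write the quantity as the sine increment $\sin(a+\beta)-\sin a$ minus the modulus loss $(1-\mu)\sin(a+\beta)$. You control the increment by concavity, which loses only $\cos(a+\beta)\approx0.87$ rather than $2/\pi$. You control the loss by $1-\varrho^{-(1-\alpha)}\le(1-\alpha)\log\varrho\le\tfrac12(1-\alpha)(\varrho^2-1)$. You never need $|S|$ small in the loss term, only $a+\beta<\pi/2$. The result is a cleaner estimate with a larger margin: roughly $0.7(1-\alpha)u$, versus the paper's $(7/(5\pi)-4/27)(1-\alpha)u\approx0.3(1-\alpha)u$ up to the factor $\cos S\approx1$.

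Two of your constants are slightly off, but harmlessly so:
\begin{itemize}
\item $c_2=\tfrac12[(1-2\eta)+\eta^2u^2]$ can marginally exceed $1/3$; its supremum is $1/3+1/648$.
\item With your bound $\beta<0.4$, you only get $\cos(a+\beta)>0.87$, not about $0.9$.
\end{itemize}
Still, $c_1=0.8$ holds, since $\delta\ge\tfrac{26}{27}u$, and $c_1-c_2u>0.6$ on $(0,1/3)$.
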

\begin{proof}
    Since $S(u)=\phi+\tau(u)<0$ and $\tau(u)=\arctan(2\eta u)>0$, one must have $\phi<0$. Since $\tau(u)$ and $S(u)$ are strictly increasing, we denote the critical value
    $$u_{*}=\tan(-\phi)/(2\eta)>0$$
    such that $S(u_{*})=0$, so that Case A corresponds to $u\in [0,u_{*})$. Let $\zeta(u)=|\tan S(u)|=-\tan S(u)$, then one has the bound that
    \begin{equation}
    \label{eq::bound_tu}
        0\le \zeta(u)\le \tan(-\phi)\le \tan(\alpha\theta_{*})\le c_{*}=1/9,
    \end{equation}
    which also implies that
    \begin{equation}
    \label{eq::bound_u*}
    0<u_{*}\le c_{*}/(2\eta)<1/3    
    \end{equation}
    since $\eta\in (1/6,1/3)$. Rewrite \cref{eq::F_expression} as the form of
    \begin{equation}
         \frac{F'(u)}{\alpha\sqrt{1+k(u)^2}}=-\cos S(u)J(u),
    \end{equation}
    where 
    \begin{equation}
        \label{eq::J}
        J(u)=\mu(u)\sin\beta(u)-(1-\mu(u)\cos\beta(u))\zeta(u),
    \end{equation}
    and it suffices to show $J(u)>0$ for $u\in(0,1/3)$.

    Recall that $w(u)=(1-\eta u^2)+iu$, so on the negative-$S(u)$ interval $u\in(0,1/3)$ one has $1/2\le \Re\{w(u)\}\le 1$, which gives
    \begin{equation}   \label{eq::estimate_cos_delta}
        \cos \delta(u)=\frac{\Re\{w(u)\}}{\sqrt{\Re\{w(u)\}^2+u^2}}\ge \frac{1}{\sqrt{1+4u^2}}
    \end{equation}
    with
    \begin{equation}
        \label{eq::estimate_rho}
        \varrho(u)=\frac{\Re\{w(u)\}}{\cos \delta(u)}\le \sqrt{1+4u^2}.
    \end{equation}
    Hence the exponent $\mu(u)=\varrho(u)^{-(1-\alpha)}$ has the lower bound that
    \begin{equation}
    \label{eq::estimate_mu}
        \mu(u)\ge (1+4u^2)^{-(1-\alpha)/2}\ge 1-2(1-\alpha)u^2,
    \end{equation}
    where the last inequality is Bernoulli's inequality, valid since the exponent $-(1-\alpha)/2$ is negative. And for $\delta(u)$ itself, a simple estimate reads
    \begin{equation}
        \label{eq::estimate_delta}
        \arctan(u)\le\delta(u)=\arctan\left(\frac{u}{1-\eta u^2}\right)\le \arctan(2u),
    \end{equation}
    which implies the bound of $\beta(u)=(1-\alpha)\delta(u)$ that
    \begin{equation}
        \label{eq::estimate_sin_beta}
        \sin\beta(u)\ge \frac{2(1-\alpha)}{\pi}\arctan u\ge \frac{2(1-\alpha)}{\pi}\frac{u}{1+u^2}.
    \end{equation}
    and 
    \begin{equation}
        \label{eq::estimate_cos_beta}
        \cos\beta(u)\ge \cos((1-\alpha)\arctan(2u))\ge 1-2(1-\alpha)^2u^2
    \end{equation}
    with the well-known relation 
    \begin{equation}
        \frac{x}{1+x^2}\le \arctan x\le x,\quad x\in\mathbb{R}.
    \end{equation}

    Therefore, with the estimate of \cref{eq::estimate_mu,eq::estimate_sin_beta,eq::estimate_cos_beta}, one has that, for $u\in(0,u_{*})\subset(0,1/3)$,
    \begin{equation}
        \label{eq::estimate_J}
        \begin{aligned}
J(u)
&\ge \big(1-2(1-\alpha)u^2\big)\cdot
\frac{2}{\pi}(1-\alpha)\frac{u}{1+u^2}
-\big(4(1-\alpha)u^2\big)\cdot c_{*}\\
&=(1-\alpha)u\left[
\frac{2}{\pi(1+u^2)}\big(1-2(1-\alpha)u^2\big)-4u c_{*}
\right]\\
&>(1-\alpha)u\left[\frac{2}{\pi}\cdot\frac{9}{10}\cdot\frac{7}{9}-\frac{4}{27}\right]=(1-\alpha)u\left[\frac{7}{5\pi}-\frac{4}{27}\right]>0,\\
\end{aligned}
    \end{equation}
which guarantees $F'(u)<0$ for all $u$ for which the $S$--$\beta$ phase diagram lies in Case A.
\end{proof}

\begin{proposition}[Case B]\label{prop::B}
If $0\le S(u)\le\pi/2$, then $F'(u)<0$.
\end{proposition}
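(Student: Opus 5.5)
The plan is to work directly from the factorized form \cref{eq::F_expression}. Since $\alpha>0$ and $\sqrt{1+k(u)^2}>0$, the claim $F'(u)<0$ for $u>0$ is equivalent to
\[
\mu(u)\sin\bigl(S(u)-\beta(u)\bigr)<\sin S(u).
\]
I will prove this using only three facts established earlier in the appendix:
\begin{itemize}
\item $\mu(u)\in(0,1)$ for $u>0$, by Lemma~\ref{lem:rho-mu};
\item $\beta(u)=(1-\alpha)\delta(u)\in(0,\pi)$, because $\delta(u)\in(0,\pi)$ for $u>0$ (it is increasing from $\delta(0)=0$ by Lemma~\ref{lem:delta-increasing}, and $\Im w(u)=u>0$);
\item the Case~B hypothesis $0\le S(u)\le\pi/2$.
\end{itemize}
Under these facts $S-\beta\in(-\pi,\pi/2)$. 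I will split according to the sign of $S-\beta$; no quantitative estimates (unlike Case~A) should be needed.

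\emph{Subcase $S(u)\le\beta(u)$.} Here $S-\beta\in(-\pi,0]$, so $\sin(S-\beta)\le 0$. Hence $\mu\sin(S-\beta)\le 0\le\sin S$. The only issue is strictness, which I handle by checking when both inequalities can be equalities.
\begin{itemize}
\item If $S>0$, then $\sin S>0$ because $S\in(0,\pi/2]$, so the strict inequality holds.
\item If $S=0$, then $S-\beta=-\beta\in(-\pi,0)$. This gives $\sin(-\beta)<0$, and since $\mu>0$ we get $\mu\sin(S-\beta)<0=\sin S$.
\end{itemize}
So strictness holds in either case.

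\emph{Subcase $S(u)>\beta(u)$.} Here $0<S-\beta<S\le\pi/2$. Sine is strictly increasing on $[0,\pi/2]$, so
\[
0<\sin(S-\beta)<\sin S.
\]
Multiplying the positive quantity $\sin(S-\beta)$ by $\mu<1$ only decreases it. Therefore $\mu\sin(S-\beta)<\sin(S-\beta)<\sin S$.

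Combining the two subcases gives $F'(u)<0$ whenever $0\le S(u)\le\pi/2$. The only point requiring care is the endpoint $S=0$, where $\sin S$ vanishes and strictness must come from $\beta>0$. That in turn needs $\delta(u)>0$ for $u>0$, which is where $\Im w(u)=u>0$ is used. The upper endpoint $S=\pi/2$ is harmless, since $\sin S=1$ there.
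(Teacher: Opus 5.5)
Your proof is correct. It starts from the same identity \cref{eq::F_expression} and uses the same inputs as the paper ($\mu\in(0,1)$, $\beta\in(0,\pi)$, $S\in[0,\pi/2]$, with $u>0$). The only difference is bookkeeping: the paper expands $\sin(S-\beta)$ and signs the two pieces $(\mu\cos\beta-1)\sin S$ and $-\mu\cos S\sin\beta$ separately, whereas you split on the sign of $S-\beta$ and use the monotonicity of sine.

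Your handling of strictness is cleaner than the paper's. The paper asserts $-\mu\cos S\sin\beta<0$, which fails at $S=\pi/2$ because $\cos S=0$ there; at that endpoint strictness must come from $\mu\cos\beta-1<0$ instead. Your case analysis checks both endpoints explicitly.
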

\begin{proof}
    One rewrites \cref{eq::F_expression} into
    \begin{equation}       \label{eq::F_caseB}
        \frac{F'(u)}{\alpha\sqrt{1+k(u)^2}}=(\mu(u)\cos\beta(u)-1)\sin S(u)-\mu(u)\cos S(u)\sin\beta(u).
    \end{equation}
    Here, at Case B with $S(u)\in [0,\pi/2]$, one has $\cos S\ge 0$, $\sin\beta>0$, $\cos\beta\le 1$ and $\mu\in(0,1)$, which imply that
    \begin{equation}
        \label{estimate_caseB}
        (\mu\cos\beta-1)\sin S\le(1-1)\sin S\le 0,\quad -\mu \cos S\sin \beta< 0.
    \end{equation}
    Hence $F'(u)<0$ holds when the $S$--$\beta$ phase diagram lies in Case B.
\end{proof}

\begin{proposition}[Case C]\label{prop::C}
If $S(u)>\pi/2$ and $\beta(u)\ge\pi/2$, then $F'(u)<0$.
\end{proposition}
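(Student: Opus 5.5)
The proof works directly from the representation \cref{eq::F_expression},
\[
\frac{F'(u)}{\alpha\sqrt{1+k(u)^2}}=\mu(u)\sin\big(S(u)-\beta(u)\big)-\sin S(u).
\]
Since the prefactor $\alpha\sqrt{1+k(u)^2}$ is positive, it suffices to show that the right-hand side is negative in Case C. The plan is to confine the two angles $S(u)$ and $S(u)-\beta(u)$ to narrow intervals using only the standing angle restriction $|\phi|\le\alpha\thetastar$ from Theorem~\ref{thm:Fprime-negative}. The bound $\mu(u)\in(0,1)$ from Lemma~\ref{lem:rho-mu} then controls the first term.

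\textbf{Localizing $S(u)$.} Since $\tau(u)=\arctan(2\eta u)<\pi/2$ and $\phi\le\alpha\thetastar$, we have $S(u)<\pi/2+\alpha\thetastar\le\pi/2+\thetastar$. Combined with the Case C hypothesis $S(u)>\pi/2$, this gives $S(u)\in(\pi/2,\pi/2+\thetastar)$. Because $\sin$ is decreasing on $[\pi/2,\pi]$ and $\pi/2+\thetastar<\pi$, it follows that
\[
\sin S(u)>\sin(\pi/2+\thetastar)=\cos\thetastar .
\]

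\textbf{Localizing $S(u)-\beta(u)$.} Case C also gives $\pi/2\le\beta(u)<\pi$, the upper bound coming from the definition of $\beta$ in \cref{eq::auxiliary} (equivalently from Lemma~\ref{lem:second-quadrant}). Hence
\[
S(u)-\beta(u)\le S(u)-\pi/2<\thetastar,
\qquad
S(u)-\beta(u)>\pi/2-\pi=-\pi/2 .
\]
So $S(u)-\beta(u)\in(-\pi/2,\thetastar)$, and since $\sin$ is increasing there, $\sin(S(u)-\beta(u))<\sin\thetastar$. We now split on the sign of this sine.
\begin{itemize}
\item If $\sin(S-\beta)\le0$, then $\mu\sin(S-\beta)-\sin S\le-\sin S<0$ directly.
\item If $\sin(S-\beta)>0$, then $\mu<1$ gives $\mu\sin(S-\beta)<\sin\thetastar$.
\end{itemize}
In the second case it remains to check $\sin\thetastar<\cos\thetastar$, i.e. $\tan\thetastar=\cstar=1/9<1$, which is immediate. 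Therefore
\[
\mu\sin(S-\beta)-\sin S<\sin\thetastar-\cos\thetastar<0,
\]
and so $F'(u)<0$ throughout Case C.

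\textbf{Main point of care.} There is no serious obstacle here; Case C is the easy second-quadrant case. The one step to check carefully is that $S(u)$ cannot exceed $\pi/2+\thetastar$, which is what ensures $\sin S$ stays close to $1$ and does not degenerate. This uses that $\tau(u)<\pi/2$ strictly, together with the sign bound on $\phi$. The genuinely delicate regime is the one left for Case D, where $\beta<\pi/2$ and $S-\beta$ can approach $\pi/2$.
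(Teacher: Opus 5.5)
Your proof is correct and is essentially the paper's argument. Both confine $S(u)$ to a thin strip just above $\pi/2$ so that $\sin S(u)$ stays close to $1$, confine $S(u)-\beta(u)$ to $(-\pi/2,c)$ with $c$ small so that $\mu(u)\sin(S(u)-\beta(u))$ is small, and conclude from $\mu(u)<1$. The differences are cosmetic: the paper uses the cruder bound $\phi\le\pi/6$ to get $\tfrac12-\tfrac{\sqrt3}{2}<0$, whereas you use $\phi\le\alpha\theta_*$ and $\tan\theta_*<1$, and you make explicit the sign split on $\sin(S-\beta)$ that the paper's step $\mu\sin(S-\beta)\le\tfrac12$ leaves implicit.
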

\begin{proof}
    For this case, one must have $\phi>0$ by Lemma~\ref{lem:second-quadrant}. Note that $S(u)\le \phi+\pi/2$ with $\phi\in(0,\pi/6)$, so at Case C, one has
    \begin{equation}
        \label{eq::estimate_difference}
        -\frac{\pi}{2}<S(u)-\beta(u)\le \frac{\pi}{2}+\phi-\frac{\pi}{2}\le\frac{\pi}{6},
    \end{equation}
    which indicates
    \begin{equation}
    \label{eq::estimate_sin_diff}
      \sin(S(u)-\beta(u))\le 1/2   
    \end{equation}
    and also
    \begin{equation}
        \label{eq::estimate_sin_S}
        \sin S(u)\ge \sin(\frac{\pi}{2}+\phi)=\cos\phi\ge\frac{\sqrt{3}}{2}.
    \end{equation}
Substituting \cref{eq::estimate_sin_diff,eq::estimate_sin_S} into \cref{eq::F_expression}, as well as the relation $\mu\in(0,1)$,  one immediately derives
\begin{equation}   \label{eq::estimate_caseC}
 \frac{F'(u)}{\alpha\sqrt{1+k(u)^2}}=\mu\sin(S-\beta)-\sin S\le \frac{1}{2}-\frac{\sqrt{3}}{2}<0.    
\end{equation}
\end{proof}

\begin{proposition}[Case D]\label{prop::D}
If $S(u)>\pi/2$ and $0<\beta(u)< \pi/2$, then $F'(u)<0$.
\end{proposition}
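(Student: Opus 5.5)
The plan is to work from the representation \cref{eq::F_expression} and expand $\sin(S-\beta)$. Since Case D has $S(u)>\pi/2$, Lemma~\ref{lem:second-quadrant} forces $\phi>0$, so $0<\phi\le\alpha\thetastar$. Write
\[
\mu\sin(S-\beta)-\sin S=-\sin S\,(1-\mu\cos\beta)-\mu\cos S\,\sin\beta .
\]
In Case D we have $\pi/2<S\le\pi/2+\phi$. Hence $\sin S\ge\cos\phi>0$ and $0<-\cos S\le\sin\phi$. Together with $\sin\beta>0$ and $1-\mu\cos\beta>0$ (since $\mu<1$ by Lemma~\ref{lem:rho-mu}), this gives
\[
\frac{F'(u)}{\alpha\sqrt{1+k^2}}\le -\cos\phi\,(1-\mu\cos\beta)+\mu\sin\phi\,\sin\beta .
\]
It therefore suffices to show $\mu\tan\phi\,\sin\beta<1-\mu\cos\beta$. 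I would use the crude lower bound $1-\mu\cos\beta\ge 1-\mu$ on the right. The goal then reduces to
\[
\tan\phi\,\sin\beta<1-\mu ,
\]
where $\mu<1$ has been dropped on the left.

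\textbf{Bounding the left side.} Convexity of $\tan$ on $[0,\pi/2)$ gives $\tan\phi\le\tan(\alpha\thetastar)\le\alpha\tan\thetastar=\alpha/9$. For $\sin\beta$, recall $\beta=(1-\alpha)\delta$ with $\delta<\pi$. This gives $\sin\beta\le\beta<(1-\alpha)\pi$, and of course also $\sin\beta\le1$. So the left side is at most $\tfrac{\alpha}{9}\min\{(1-\alpha)\pi,1\}\le\tfrac{\pi}{9}(1-\alpha)$.

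\textbf{Bounding $1-\mu$ from below.} This is the key step, since $\mu=\varrho^{-(1-\alpha)}$ tends to $1$ as $\alpha\to1^-$. Case D requires $S(u)>\pi/2$, so $u>u_0$, where by Lemma~\ref{lem:u0-delta-pi-over-2} $u_0=\cot\phi/(2\eta)$. Using $\eta<1/3$ and $\cot\phi\ge\cot\thetastar=9$, we get $u_0>27/2$. Since $\varrho(u)\ge\Im w(u)=u$, it follows that $\varrho>13.5$. Set $x=1-\alpha\in(0,1)$ and $L=\log 13.5>2.6$. Then $\mu<e^{-xL}$, and therefore
\[
1-\mu>1-e^{-xL}\ge\frac{xL}{1+xL}.
\]

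\textbf{Conclusion and main obstacle.} It remains to check $\tfrac{\pi}{9}x<\tfrac{xL}{1+xL}$. This is equivalent to $\pi(1+xL)<9L$, which holds because $\pi(1+L)<9L$ when $L>2.6$. Hence $F'(u)<0$ throughout Case D.

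The main obstacle is exactly the degeneration $\mu\to1$ as $\alpha\to1^-$. A bound using only $\beta\ge(1-\alpha)\pi/2$ and $1-\cos\beta$ breaks down there. The two ingredients that rescue the argument are both linear in $1-\alpha$. The first is the upper bound $\beta<(1-\alpha)\pi$, which makes the bad term $O(1-\alpha)$. The second is the large modulus $\varrho>13.5$, which is forced by $u>u_0$ and the small angle $\thetastar=\arctan(1/9)$, and which makes $1-\mu$ of order $(1-\alpha)\log 13.5$. Their comparison only needs $\pi/9<\log 13.5/(1+\log 13.5)$, which holds with room to spare.
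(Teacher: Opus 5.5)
Your proof is correct and follows essentially the paper's route: the same reduction to $\mu\cos(\beta-\phi)<\cos\phi$ (equivalently $\mu\tan\phi\sin\beta<1-\mu\cos\beta$), closed by the same mechanism of comparing the $O(1-\alpha)$ quantity $\beta\tan\phi<(1-\alpha)\pi/9$ against $(1-\alpha)\log\varrho$, where $\varrho$ is large because $u>u_0=\cot\phi/(2\eta)$. The only difference is that you make this comparison additively, via $1-\mu\cos\beta\ge 1-\mu$ and $1-e^{-y}\ge y/(1+y)$ with the cruder bound $\varrho\ge u>27/2$ in place of the paper's $\varrho\ge 239/4$; this removes the paper's split into $\beta\ge 2\phi$ and $\beta<2\phi$ and never uses $\beta<\pi/2$, so your argument covers Case C as well.
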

\begin{proof}
    For this case, one must have $\phi>0$ by Lemma~\ref{lem:second-quadrant}. On the strip of $S(u)\in(\pi/2,\pi/2+\phi]$, one firstly has
    \begin{equation}
        \label{eq::estimate_S_caseD}
        \sin S\ge\sin\Big(\frac{\pi}{2}+\phi\Big)=\cos\phi,
\quad
\cos S\ge \cos\Big(\frac{\pi}{2}+\phi\Big)= -\sin\phi.
    \end{equation}
For the right-hand-side of \cref{eq::F_expression}, one has
\begin{equation}
    \label{eq::estimate_F_caseD}
    \begin{aligned}
         \mu\sin(S-\beta)-\sin S&=\sin S(\mu\cos\beta-1)-\mu\cos S\sin \beta\\
         &\le \cos \phi(\mu\cos\beta-1)+\mu\sin \phi\sin \beta\\
         &= \mu\cos(\beta-\phi)-\cos\phi.  
    \end{aligned}
\end{equation}
Hence it suffices to show that, for $u>0$ satisfies $S(u)\in (\pi/2,\pi/2+\phi]$ and $\beta\in (0,\pi/2)$,
\begin{equation}
    \label{eq::relation_caseD}
    T(u):=\mu(u)\cos(\beta(u)-\phi)<\cos \phi.
\end{equation}
Here we introduce the critical value $u_0=\cot\phi/(2\eta)$ in Lemma~\ref{lem:u0-delta-pi-over-2} such that $S(u_0)=\pi/2$, which forces $u>u_0$. At $u_0$, one has
\begin{equation}
    \label{eq::estimate_F_u0}
    \frac{F'(u_0)}{\alpha\sqrt{1+k(u_0)^2}}=\mu(u_0)\cos(\beta(u_0))-1<0,
\end{equation}
which guarantees the left edge of the valid interval of $u>u_0$.

    If $\beta(u)\ge 2\phi$, then
     $$\cos(\beta-\phi)-\cos\phi=2\sin\left(\frac{\beta}{2}\right)\sin\left(\phi-\frac{\beta}{2}\right)\le  0,$$
    which provides
    \begin{equation}
        T(u)\le \mu(u)\cos\phi<\cos\phi,
    \end{equation}
    satisfying \cref{eq::relation_caseD}.

    It remains to consider the case $0<\beta(u)<2\phi$, where $T(u)>0$ allows taking its logarithm. Since
$\mu=\varrho^{-(1-\alpha)}$ and $\beta=(1-\alpha)\delta$ by Lemma~\ref{lem:delta-increasing}, one has
\begin{equation}
   \log\frac{T(u)}{\cos\phi}
=
-(1-\alpha)\log\varrho
+
\log\frac{\cos(\beta-\phi)}{\cos\phi}. 
\end{equation}
Since $\beta-\phi\in(-\phi,\phi)$,
one therefore has the following integral estimate that
\begin{equation}
  \begin{aligned}
\log\frac{\cos(\beta-\phi)}{\cos\phi}
&=
\int_0^\beta
\frac{\mathrm{d}}{\mathrm{d}s}\log\cos(s-\phi)\,\mathrm{d}s  \\
&=
\int_0^\beta \tan(\phi-s)\,\mathrm{d}s  \\
&\le
\beta\tan\phi.
\end{aligned}  
\end{equation}
This indicates that
\begin{equation}
   \log\frac{T(u)}{\cos\phi}
\le
(1-\alpha)\left(\delta(u)\tan\phi-\log\varrho(u)\right), 
\end{equation}
and it suffices to prove
\begin{equation}
   \log\varrho(u)>\delta(u)\tan\phi. 
\end{equation}
Note that $2\eta u>\cot \phi$ at the case $S(u)>\pi/2$, then 
\begin{equation}
  u>u_0:=\frac{\cot \phi}{2\eta},  
\end{equation}
and thus $\varrho(u)>\varrho(u_0)$ by the strictly increase property of $\varrho$. Since $\phi\le \theta_{*}=\arctan(1/9)$ and $\eta\le 1/3$, one has
\begin{equation}
    \varrho(u_0)\ge \eta u_0^2-1=\frac{\cot^2\phi}{4\eta}-1\ge \frac{239}{4}. 
\end{equation}
Since $\delta(u)\in (0,\pi)$, one then has
\begin{equation}
\log \varrho(u)\ge \log \varrho(u_0)=\log\frac{239}{4}>\frac{\pi}{9}>\delta(u)\tan\phi,
\end{equation}
which provides
\begin{equation}
    \log\frac{T(u)}{\cos\phi}<0,\quad \forall u>u_0, 
\end{equation}
satisfying \cref{eq::relation_caseD}.
\end{proof}

\subsection{An upper bound of \texorpdfstring{$|\rho_\alpha(z)|$}{|rho\_alpha(z)|}}
\label{sec::upper_bound}
In \cref{sec::global_maximum}, we have shown that $F(0)$ becomes the strict global maximum of the main phase function $F(u)$. Here we provide an upper bound of $|\rho_\alpha(z)|$ based on estimate \cref{eq::rho_bound}, described in Theorem~\ref{thm:rho-corrected-fixed}.

\begin{theorem}[Explicit bound on $|\rho_\alpha(z)|$]\label{thm:rho-corrected-fixed}
For $z=re^{i\theta}$ with $r\in (0,1)$,
suppose that $|\theta|\le \theta_{*}(1-\alpha)$. Then one has,
\begin{equation}
    \label{eq::estimate_rho_general}
    |\rho_\alpha(z)|\ \le\ \frac{\sigma}{2\pi}e^{\sigma^\alpha F(0)}
\Bigg[
2R+2\eta R^{2}
+\Big(\frac{1}{aR}+\frac{2\eta}{a}\Big)e^{-aR^{2}}
\Bigg],
\end{equation}
where
\[
F(0)=-(1-\alpha)\cos\phi,\quad
a:=\sigma^\alpha\frac{\alpha \eta}{5}\cos\phi,
\]
and
\be
R=\max\left\{
\frac{20|\tan\phi|}{3\eta},\Big(\frac{20}{3\alpha \eta\cos\phi}\Big)^{\frac{1}{2-\alpha}},\Big(\frac{20\eta^{\alpha-1}}{3\alpha\cos\phi}\Big)^{\frac{1}{2(1-\alpha)}}
\right\}.\label{eq::R_def}
\ee
\end{theorem}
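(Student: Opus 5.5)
We start from the bound \cref{eq::rho_bound},
\[
|\rho_\alpha(z)|\le\frac{\sigma}{2\pi}\int_{-\infty}^{\infty}e^{\sigma^\alpha F(u)}(1+2\eta|u|)\,\mathrm{d}u,
\]
and split the $u$-line at $|u|=R$.

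On the compact part $|u|\le R$, Theorem~\ref{thm:Fprime-negative} gives $F(u)\le F(0)$. Hence this piece is at most
\[
\frac{\sigma}{2\pi}e^{\sigma^\alpha F(0)}\int_{-R}^{R}(1+2\eta|u|)\,\mathrm{d}u=\frac{\sigma}{2\pi}e^{\sigma^\alpha F(0)}\bigl(2R+2\eta R^2\bigr),
\]
which accounts for the first two terms in \cref{eq::estimate_rho_general}.

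For the tails $|u|\ge R$, the goal is a quantitative version of Lemma~\ref{lemma::u_large} of the form
\begin{equation}\label{eq::tail_goal}
F(u)\le F(0)-\frac{\alpha\eta}{5}\cos\phi\,u^2,\qquad |u|\ge R,
\end{equation}
so that $\sigma^\alpha F(u)\le\sigma^\alpha F(0)-au^2$ with $a=\sigma^\alpha\alpha\eta\cos\phi/5$. The tail is then bounded with the standard estimates
\[
\int_R^\infty e^{-au^2}\,\mathrm{d}u\le\frac{e^{-aR^2}}{2aR},\qquad \int_R^\infty u\,e^{-au^2}\,\mathrm{d}u=\frac{e^{-aR^2}}{2a}.
\]
Doubling for the two sides $u\ge R$ and $u\le -R$ gives $\bigl(\frac{1}{aR}+\frac{2\eta}{a}\bigr)e^{-aR^2}$, the third term.

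To prove \cref{eq::tail_goal}, write $F(u)=\Re\{e^{i\phi}\alpha w(u)\}-\Re\{e^{i\phi}w(u)^\alpha\}$ and bound each piece explicitly:
\begin{itemize}
\item $\Re\{e^{i\phi}\alpha w\}=\alpha\bigl[(1-\eta u^2)\cos\phi-u\sin\phi\bigr]\le\alpha\cos\phi-\alpha\eta\cos\phi\,u^2+\alpha|\tan\phi|\cos\phi\,|u|$.
\item $|w(u)^\alpha|=\varrho^\alpha\le\bigl(1+|u|+\eta u^2\bigr)^\alpha$, which for large $|u|$ is at most a multiple of $\eta^\alpha|u|^{2\alpha}$, or alternatively at most a multiple of $|u|^\alpha$.
\item $-F(0)=(1-\alpha)\cos\phi\le\cos\phi$.
\end{itemize}
With these, it suffices that the leading term $\alpha\eta\cos\phi\,u^2$ exceed $\tfrac{1}{5}$ of itself plus three error terms, each bounded by $\tfrac{3}{20}\alpha\eta\cos\phi\,u^2$: the linear term $\alpha|\tan\phi|\cos\phi\,|u|$, the constant $\alpha\cos\phi+(1-\alpha)\cos\phi$, and the $|w|^\alpha$ term.

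Each requirement is matched to one entry of the maximum in \cref{eq::R_def}:
\begin{itemize}
\item The linear term is absorbed when $|u|\ge 20|\tan\phi|/(3\eta)$.
\item The $|u|^\alpha$-type terms are absorbed when $|u|^{2-\alpha}\ge 20/(3\alpha\eta\cos\phi)$.
\item The $\eta^\alpha|u|^{2\alpha}$ term is absorbed when $|u|^{2-2\alpha}\ge 20\eta^{\alpha-1}/(3\alpha\cos\phi)$.
\end{itemize}
This leaves $(1-\tfrac{9}{20})\ge\tfrac{1}{5}$ of the quadratic decay. Only $\cos\phi\ge\cos\thetastar>0$ and $\eta\in(1/6,1/3)$ are used here.

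The main obstacle is this bookkeeping. The constants $20/3$ and the exact exponents in $R$ must come out matching \cref{eq::R_def}, which requires a careful split of $\varrho^\alpha$ into $|u|^\alpha$- and $|u|^{2\alpha}$-type contributions, for instance via $(x+y)^\alpha\le x^\alpha+y^\alpha$. Everything else follows directly from the global-maximum property of $F$ established in Theorem~\ref{thm:Fprime-negative}.
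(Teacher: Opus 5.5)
Your proposal is correct and follows essentially the same route as the paper. You split at $\pm R$, use $F(u)\le F(0)$ from Theorem~\ref{thm:Fprime-negative} on the compact part, absorb the linear term and the pieces of $|w|^\alpha\le 1+|u|^\alpha+\eta^\alpha|u|^{2\alpha}$ term by term into a fraction of $\alpha\eta\cos\phi\,u^2$ using the entries of $R$, and finish with the Gaussian tail integrals. The only slip is in your count: after splitting $|w|^\alpha$ there are five absorbed pieces, not three. The constants $1$ and $\cos\phi$ are both covered by the second entry of $R$, since $(20/(3\alpha\eta\cos\phi))^{1/(2-\alpha)}\ge(20/(3\alpha\eta\cos\phi))^{1/2}$. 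The five pieces consume $\tfrac34$ of the quadratic term rather than $\tfrac{9}{20}$, which still leaves $\tfrac14\ge\tfrac15$, so the argument closes.
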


\begin{proof}
Recall the original expression of $F(u)=\Re\{e^{i\phi}(\alpha w(u)-w(u)^{\alpha})\}$ provided in \cref{eq::main_phase} with $w(u)=1+iu-\eta u^2$. A straightforward estimate gives
\be
F(u)\le \alpha\cos\phi-\alpha \eta\cos\phi u^2+\alpha|\sin\phi||u|+|u|^\alpha+\eta^\alpha|u|^{2\alpha}+1
\label{eq::F_bound}
\ee
which is derived via facts that $|w|\le 1+|u|+\eta u^2$ and $(x+y+z)^\alpha\le x^\alpha+y^\alpha+z^\alpha$, $\alpha\in (0,1)$.
Set 
$$c=\frac{1}{5}\alpha \eta\cos\phi.$$ 
For $|u|\ge R$, one requires simultaneously that
\[
\max\{\alpha|\sin\phi||u|, |u|^\alpha,\eta^\alpha|u|^{2\alpha},1\}
\le\frac{3}{20}\alpha \eta\cos\phi\cdot u^2,
\]
and 
\[
c|u|^2\ge\cos\phi,
\]
which leads to
\be
\ba
R &\ge \frac{20}{3}\,\frac{|\tan\phi|}{\eta}, \quad R \ge \Big(\frac{20}{3\,\alpha \eta\cos\phi}\Big)^{\!\frac{1}{2-\alpha}},\\
R&\ge \Big(\frac{20\,\eta^{\,\alpha-1}}{3\,\alpha\cos\phi}\Big)^{\!\frac{1}{2(1-\alpha)}}, \quad
R\ge \sqrt{\frac{20}{3\,\alpha \eta\cos\phi}},\quad
R\ge \sqrt{\frac{5}{\alpha \eta}}.
\ea
\ee
It is easy to see that
\be
\Big(\frac{20}{3\alpha \eta\cos\phi}\Big)^{\frac{1}{2-\alpha}}
\ge \sqrt{\frac{20}{3\alpha \eta\cos\phi}}
\ge \sqrt{\frac{5}{\alpha \eta}},
\ee
which provides the definition of scale $R$ in \cref{eq::R_def}. Then from \eqref{eq::F_bound}, one has that for $|u|\ge R$,
\begin{equation}
\begin{aligned}
F(u)&\le \alpha\cos\phi-\alpha \eta\cos\phi u^2
+\underbrace{\big(\alpha|\sin\phi|\,|u|+|u|^\alpha+\eta^\alpha|u|^{2\alpha}+1\big)}_{\le\ \frac{3}{5}\alpha \eta\cos\phi u^2}\\
&\le \alpha\cos\phi-\frac{2}{5}\alpha \eta\cos\phi u^2
\le \big(\alpha\cos\phi-\cos\phi\big)-\frac{1}{5}\alpha \eta\cos\phi u^2\\
&= F(0)-cu^2,
\end{aligned}\label{eq::F_estimate}
\end{equation}
where the last inequality follows from $c u^2\ge \cos\phi$.

One then splits the integral at $\pm R$ and bounds the Gaussian tails by the asymptotic analysis that
\begin{equation}
\label{eq::asymp_1}
    \int_R^\infty e^{-a u^2}\mathrm{d}u\le \frac{e^{-aR^2}}{2aR}
\end{equation}
and
\begin{equation}
\label{eq::asymp_u}
    \int_R^\infty u e^{-a u^2}\mathrm{d}u=\frac{e^{-aR^2}}{2a},
\end{equation} 
which yield the stated bound.
One sets
\[
a:=\sigma^\alpha c>0.
\]
Splitting the integral at $\pm R$ and using evenness of the bounds, one has
\[
\begin{aligned}
\int_{\mathbb{R}} e^{\sigma^\alpha F(u)}\bigl(1+2\eta|u|\bigr)\mathrm{d}u
\le e^{\sigma^\alpha F(0)}\Bigl[
&\int_{-R}^{R} (1+2\eta|u|)\mathrm{d}u\\
&{}+2\int_{R}^{\infty} e^{-a u^2}(1+2\eta u)\mathrm{d}u
\Bigr].
\end{aligned}
\]
The compact part is elementary, with
\begin{equation}
\label{eq::estimate_mid}
    \int_{-R}^{R} (1+2\eta|u|)\mathrm{d}u = 2\int_{0}^{R} (1+2\eta u)\mathrm{d}u=2R+2\eta R^2.
\end{equation}
For the tail integrals, the asymptotic estimates in \cref{eq::asymp_1,eq::asymp_u} give
\begin{equation}
\label{eq::estimate_tail}
2\int_{R}^{\infty} e^{-a u^2}(1+2\eta u)\mathrm{d}u\le \left(\frac{1}{aR}+\frac{2\eta}{a}\right)e^{-aR^2}.    
\end{equation}

Therefore, from \cref{eq::estimate_mid,eq::estimate_tail}, together with the estimate of $F(u)$ in \cref{eq::F_estimate}, one has
\[
\int_{\mathbb{R}} e^{\sigma^\alpha F(u)}\bigl(1+2\eta |u|\bigr)\,du
\ \le\ e^{\sigma^\alpha F(0)}
\left[\,2R+2\eta R^2+\left(\frac{1}{aR}+\frac{2\eta}{a}\right)e^{-aR^2}\right].
\]
Multiplying by the factor $\sigma/2\pi$ yields that
\[
|\rho_\alpha(z)|\le\frac{\sigma}{2\pi}e^{\sigma^\alpha F(0)}
\left[2R+2\eta R^2+\left(\frac{1}{aR}+\frac{2\eta}{a}\right)e^{-aR^2}\right],
\]
which provides an upper bound of $|\rho_\alpha(z)|$.
\end{proof}

Straightforward manipulations lead to Corollary~\ref{coro::theta_max}, which gives the estimate on the contour selected in \cref{sec::SOE_approx}.

\begin{corollary}
\label{coro::theta_max}
Take $|\theta|=(1-\alpha)\theta_{*}$.
Then the estimate becomes
\be
|\rho_\alpha(z)|
\le e^{-\Lambda(r;\alpha)}
\Bigg[
\frac{\sigma}{2\pi}\Big(2\widetilde R(\alpha)+\frac{2}{3}\widetilde R(\alpha)^2\Big)
+\ \frac{15}{\pi\cos \theta_{*}}\cdot\frac{1}{r}
\Big(\frac{1}{\widetilde R(\alpha)}+\frac{2}{3}\Big)
\Bigg],
\label{eq:rho-bound}
\ee
where
\be
\sigma=(\alpha r^{-1})^{1/(1-\alpha)},
\ee
\be
\Lambda(r;\alpha) = \sigma^{\alpha}(1-\alpha)\cos\theta_{*},  
\ee
\be
\widetilde R(\alpha)=
\max\left\{
\frac{40}{9},
\left(\frac{40}{\alpha\cos\theta_{*}}\right)^{\frac{1}{2-\alpha}},
\sqrt6\left(\frac{20}{3\alpha\cos\theta_{*}}\right)^{\frac{1}{2(1-\alpha)}}
\right\}.
\ee
\end{corollary}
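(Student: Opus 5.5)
The plan is to obtain the corollary by direct specialization of Theorem~\ref{thm:rho-corrected-fixed}, replacing every $\phi$- and $\eta$-dependent quantity by an explicit bound that uses only $|\phi|\le\alpha\theta_*\le\theta_*$ and $\eta\in(1/6,1/3)$.

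\emph{Step 1: the theorem may be applied with any larger radius.} With $|\theta|=(1-\alpha)\theta_*$ we have $|\phi|=\alpha\theta_*$. In the proof of Theorem~\ref{thm:rho-corrected-fixed}, the radius $R$ enters only through the lower-bound requirements that make \cref{eq::F_estimate} hold for $|u|\ge R$. Hence the same argument goes through verbatim for any $R'\ge R$. I will therefore apply the bound \cref{eq::estimate_rho_general} directly with $R'=\widetilde R(\alpha)$.

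\emph{Step 2: $\widetilde R(\alpha)$ dominates each of the three entries in \cref{eq::R_def}.}
\begin{itemize}
\item Using $|\tan\phi|\le\tan\theta_*=1/9$ and $\eta>1/6$, the first entry satisfies $20|\tan\phi|/(3\eta)\le 40/9$.
\item Using $\cos\phi\ge\cos\theta_*$ and $1/\eta<6$, the second entry is at most $\bigl(40/(\alpha\cos\theta_*)\bigr)^{1/(2-\alpha)}$.
\item Using $\eta^{\alpha-1}<6^{1-\alpha}$, the third entry is at most
\[
\Bigl(6^{1-\alpha}\,\tfrac{20}{3\alpha\cos\theta_*}\Bigr)^{1/(2(1-\alpha))}
=\sqrt6\,\Bigl(\tfrac{20}{3\alpha\cos\theta_*}\Bigr)^{1/(2(1-\alpha))}.
\]
\end{itemize}

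\emph{Step 3: the exponential prefactor.} Since $\cos\phi\ge\cos\theta_*$, we have $\sigma^\alpha F(0)=-\sigma^\alpha(1-\alpha)\cos\phi\le-\Lambda(r;\alpha)$. This gives the common factor $e^{-\Lambda}$.

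\emph{Step 4: the compact part.} It becomes $\frac{\sigma}{2\pi}(2\widetilde R+2\eta\widetilde R^2)\le\frac{\sigma}{2\pi}(2\widetilde R+\frac23\widetilde R^2)$.

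\emph{Step 5: the tail part.} Here I expect the only mildly delicate bookkeeping. Drop $e^{-a\widetilde R^2}\le1$ and use $2\eta\le 2/3$. This leaves $\frac{\sigma}{2\pi a}\bigl(1/\widetilde R+2/3\bigr)$. Since $a=\sigma^\alpha\alpha\eta\cos\phi/5\ge\sigma^\alpha\alpha\cos\theta_*/30$, we get
\[
\frac{\sigma}{2\pi a}\le\frac{15}{\pi\cos\theta_*}\cdot\frac{\sigma^{1-\alpha}}{\alpha}.
\]
By the definition of $\sigma$, $\sigma^{1-\alpha}=\alpha/r$, which produces exactly the factor $\frac{15}{\pi\cos\theta_*}\cdot\frac1r$ in \cref{eq:rho-bound}.

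\emph{Conclusion.} Collecting the compact and tail contributions under the common factor $e^{-\Lambda}$ gives the stated estimate. The main point to verify carefully is Step 1: we must not substitute the upper bound $\widetilde R$ into the decreasing tail term without justification. This is legitimate because re-running the proof with radius $\widetilde R$ itself gives a valid bound of the same form.
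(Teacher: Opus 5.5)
Your proposal is correct and is the specialization the paper means by ``straightforward manipulations'': the bounds $|\tan\phi|\le 1/9$, $\cos\phi\ge\cos\theta_*$, $\eta\in(1/6,1/3)$ and the identity $\sigma^{1-\alpha}=\alpha/r$ reproduce every constant in \cref{eq:rho-bound}. Your Step~1 makes explicit a point the paper leaves implicit: you re-run the split at $\pm\widetilde R(\alpha)$, which is legitimate because the proof of Theorem~\ref{thm:rho-corrected-fixed} only needs lower bounds on the radius, instead of substituting $\widetilde R\ge R$ into the tail term, which decreases in $R$.
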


\section{Integral bound of \texorpdfstring{$I_\alpha$}{I\_alpha}}
\label{sec::int_bound}
To estimate the error of SOE approximation in Theorem~\ref{thm::fractional_approx}, an upper bound of $I_\alpha$ is required with the expression of
\begin{equation}
    \label{eq::I_alpha_new}
    I_\alpha=\int_{0}^{\infty}|\rho_\alpha(re^{i\theta})|\mathrm{d}r,
\end{equation}
where $\theta=-(1-\alpha)\theta_{*}$.  The case
$\theta=(1-\alpha)\theta_{*}$ has the same value by the reflection property
$\rho_\alpha(\bar z)=\overline{\rho_\alpha(z)}$. \Cref{sec::rho_estimate}
provides estimates for $|\rho_\alpha(z)|$ within $|z|\le 1$. We therefore split
the integral into 
\begin{equation}
    \label{eq::I_alpha_split}
    I_\alpha=\int_{0}^{1}|\rho_\alpha(re^{i\theta})|\mathrm{d}r+\int_{1}^{\infty}|\rho_\alpha(re^{i\theta})|\mathrm{d}r:=I_\alpha^1+I_\alpha^2,
\end{equation}
and we provide the bound for each component.

For the compact part $I_\alpha^{1}$, the pointwise bound of
Corollary~\ref{coro::theta_max} is too conservative when $\alpha$ is close to
one.  It controls $|\rho_\alpha(z)|$ for each fixed $z$ by forcing all
lower-order terms in the phase to be dominated by the quadratic term, thereby
introducing the spurious factor $c^{1/(1-\alpha)}$.  For the integral defining
$I_\alpha^1$, a sharper route is to integrate first in $r$ and only then
estimate the saddle-contour integral.  We use the pointwise estimate only for
the complementary range $0<\alpha\le 1/2$, where its constants grow only
algebraically.

\begin{lemma}[Coercivity of the saddle phase near \(\alpha=1\)]
\label{lem::F_coercive_high_alpha}
Let $1/2\le\alpha<1$, and write the phase in \cref{eq::main_phase} as
\[
F_\alpha(u)=\Re\{e^{i\phi}(\alpha w(u)-w(u)^\alpha)\},\qquad
w(u)=1+iu-\eta u^2,\qquad \eta=\frac{2-\alpha}{6},
\]
with $|\phi|\le \alpha\theta_{*}$.  There is a universal constant $c_F>0$ such
that
\begin{equation}
\label{eq::F_coercive_high_alpha}
    -F_\alpha(u)\ge c_F(1-\alpha)(1+u^2),\qquad u\in\mathbb R .
\end{equation}
\end{lemma}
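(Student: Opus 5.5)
The approach is to factor out $(1-\alpha)$ exactly, so that the constant can be produced by a compactness argument on a closed parameter set that includes the endpoint $\alpha=1$. Set $G_\alpha(w)=\alpha w-w^\alpha$. Since $G_1\equiv0$, we write
\[
G_\alpha(w)=-(1-\alpha)\int_0^1 \partial_\beta\bigl(\beta w-w^\beta\bigr)\big|_{\beta=1-s(1-\alpha)}\,\mathrm ds
=-(1-\alpha)\int_0^1\bigl(w-w^{\beta_s}\log w\bigr)\,\mathrm ds ,
\]
with $\beta_s=1-s(1-\alpha)\in[\alpha,1]$, on the principal branch along the contour $w(u)$. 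Here $\Re w$ may be negative, but $\Im w(u)=u$, so $w(u)\neq0$ and $\arg w(u)=\delta(u)\in(-\pi,\pi)$ by Lemma~\ref{lem:delta-increasing}. Define
\[
h_\alpha(u):=\frac{-F_\alpha(u)}{(1-\alpha)(1+u^2)}
=\frac{1}{1+u^2}\int_0^1\Re\Bigl\{e^{i\phi}\bigl(w-w^{\beta_s}\log w\bigr)\Bigr\}\,\mathrm ds .
\]
This expression makes sense and is jointly continuous in $(\alpha,\phi,u)$ for $\alpha\in[1/2,1]$, $|\phi|\le\alpha\thetastar$, $u\in\mathbb R$. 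At $\alpha=1$ it reduces to $\Re\{e^{i\phi}(w-w\log w)\}/(1+u^2)$. The claim \cref{eq::F_coercive_high_alpha} is equivalent to $\inf h_\alpha(u)>0$ over this set, and I would prove it in two steps.

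\emph{Step 1 (large $|u|$, uniform in $\alpha$).} For $|u|\ge u_1$, with $u_1$ universal, I use $w(u)=-\eta u^2(1+O(1/u))$. This gives $\log w=2\log|u|+\log\eta+i\delta(u)$ with $\delta\to\pm\pi$, and $|w^{\beta}|=|w|^{\beta}\le|w|$. Writing $w-w^{\beta}\log w=w\bigl(1-w^{\beta-1}\log w\bigr)$ and using $\beta\in[1/2,1]$, the dominant term in the integrand is
\[
-\Re\{e^{i\phi}w^{\beta}\log w\}\approx \eta^{\beta}u^{2\beta}\cdot 2\log|u|\cdot\cos(\phi+\beta\delta)\,+\,\text{phase corrections},
\]
together with $\Re\{e^{i\phi}w\}\approx-\eta u^2\cos\phi$. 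Signs must be checked carefully here. For $\beta$ close to $1$ the term $w^{\beta}\log w\approx w\log w$ gives $+\eta u^2\cdot 2\log|u|\cos\phi$, which beats $-\eta u^2$ once $\log|u|$ is large. For $\beta$ near $\alpha$ with $\alpha$ bounded away from $1$, I would instead fall back on Lemma~\ref{lemma::u_large}, which gives $-F\ge\frac{\alpha\eta}{2}\cos\phi\,u^2\ge c(1-\alpha)(1+u^2)$ directly. The real task is to make the threshold in Lemma~\ref{lemma::u_large} uniform on $\alpha\in[1/2,1-\epsilon_0]$ and to use the integral representation only on $[1-\epsilon_0,1]$.

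\emph{Step 2 (compact range).} On the compact set $|u|\le u_1$, $\alpha\in[1/2,1]$, $|\phi|\le\alpha\thetastar$, it suffices to show $h_\alpha(u)>0$ pointwise; continuity and compactness then give a positive minimum. For $\alpha<1$, Theorem~\ref{thm:Fprime-negative} shows that $F_\alpha$ attains its unique maximum at $u=0$ with $F_\alpha(0)=-(1-\alpha)\cos\phi<0$. Hence $F_\alpha<0$ everywhere and $h_\alpha>0$. At $\alpha=1$ the limit function is $H(u)=\Re\{e^{i\phi}(w-w\log w)\}$, with $H(0)=\cos\phi>0$. For $H'$ I would repeat the case analysis of Propositions~\ref{prop::A}--\ref{prop::D} on $F_\alpha/(1-\alpha)$ and pass to the limit $\alpha\to1$. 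Since $F_\alpha'/(\alpha(1-\alpha))$ converges to $-\Re\{e^{i\phi}\log w\cdot w'\}$, the strict inequalities in Cases~A--D become, after division by $(1-\alpha)$, inequalities of the form $u\cdot(\text{positive})$. These survive the limit (Case~A already produced the factor $(1-\alpha)u$). This shows $H$ is decreasing for $u>0$, increasing for $u<0$, and positive on the compact range, given that Step~1 shows $H>0$ for large $|u|$.

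I expect the main obstacle to be Step~1 and the endpoint $\alpha=1$ in Step~2, because every bound must be uniform as $\alpha\to1^-$. The pointwise bounds of Theorem~\ref{thm:rho-corrected-fixed} deliberately degrade there, through the factor $\bigl(\cdot\bigr)^{1/(2(1-\alpha))}$ in $R$. The factorization through $\partial_\beta$ is what removes this degradation, so the key computation is the sign and size of $\Re\{e^{i\phi}w^{\beta}\log w\}$ along the contour when $w$ is in the second quadrant.
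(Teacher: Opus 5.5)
\textbf{The gap is in Step~1.} That step carries essentially all the content of the lemma, and the mechanism you propose there fails.

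For $\alpha\in[1/2,1-\epsilon_0]$, the fallback to Lemma~\ref{lemma::u_large} is workable. The threshold from the proof of Theorem~\ref{thm:rho-corrected-fixed} stays bounded while $1-\alpha\ge\epsilon_0$.

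For $\alpha\in[1-\epsilon_0,1)$, you argue pointwise in $\beta$: $w^{\beta}\log w\approx w\log w$, so the $\log|u|$ term beats $\Re\{e^{i\phi}w\}$. But that approximation has relative error of order $(1-\beta)\log\varrho$, which is not small once $\log\varrho\gtrsim 1/(1-\beta)$. In polar form $w=\varrho e^{i\delta}$, $M=\delta+\phi$, the integrand is
\[
\varrho\cos M-\varrho^{\beta}\bigl[\log\varrho\,\cos(\phi+\beta\delta)-\delta\sin(\phi+\beta\delta)\bigr].
\]
For every fixed $\beta<1$ the bracket is $o(\varrho)$, so the integrand behaves like $-\varrho\cos\phi<0$ as $|u|\to\infty$. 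Hence, for each $\alpha<1$, there is an unbounded range of $u$ on which the integrand is negative for all $s$ except those near $0$.

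Positivity of the $s$-integral there comes only from cancellation; the integral is just $(1-\alpha)^{-1}\Re\{e^{i\phi}(w^{\alpha}-\alpha w)\}$ again. So no universal $u_1$ can come out of a pointwise-in-$\beta$ estimate. Compactness in $(\alpha,u)$ does not rescue this, because the limits $\alpha\to1^-$ and $|u|\to\infty$ do not commute. The $\partial_\beta$ factorization by itself does not remove the degradation. Separately, the displayed approximation in your Step~1 has the sign of the $\cos(\phi+\beta\delta)$ term reversed.

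\textbf{The missing idea.} Stay with the closed form
\[
-F_\alpha=\varrho\bigl[(\alpha-\varrho^{-(1-\alpha)}\cos((1-\alpha)\delta))(-\cos M)+\varrho^{-(1-\alpha)}\sin((1-\alpha)\delta)\sin M\bigr],
\]
and use that $\alpha-\varrho^{-(1-\alpha)}$ is increasing in $\varrho$.
\begin{itemize}
\item Once $\log\varrho\ge3$ (true for $|u|\ge 12$), the first coefficient is at least $\alpha-e^{-3(1-\alpha)}\ge(1-\alpha)/2$. This holds uniformly in $\alpha\in[1/2,1)$ and $u$, covering both the regime $(1-\alpha)\log\varrho\ll1$ and the regime where it is large.
\item For $u\ge12$, one has $-\cos M\ge c_M>0.83$.
\item When $\sin M<0$, the second term is bounded by $(1-\alpha)\pi\sin\theta_{*}<0.35(1-\alpha)$.
\end{itemize}
Together these give $-F_\alpha\ge c(1-\alpha)\varrho\ge c(1-\alpha)u^2/7$ for all $|u|\ge12$ and all $\alpha$ at once. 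No split at $1-\epsilon_0$ and no endpoint $\alpha=1$ are needed; this is the paper's route.

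\textbf{Step~2 is correct in conclusion but over-built.} Theorem~\ref{thm:Fprime-negative} already gives $-F_\alpha(u)\ge(1-\alpha)\cos\theta_{*}$ for every $u$. Hence $h_\alpha(u)\ge\cos\theta_{*}/(1+u_1^2)$ on $|u|\le u_1$ directly, without compactness or a limiting case analysis. That limiting analysis also has its sign reversed: $F_\alpha'/(\alpha(1-\alpha))\to\Re\{e^{i\phi}w'\log w\}=-H'$. So $H$ is increasing on $u>0$, with minimum $H(0)=\cos\phi$, not decreasing.
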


\begin{proof}
By Theorem~\ref{thm:Fprime-negative}, $F_\alpha$ attains its maximum at
$u=0$, and
\[
F_\alpha(0)=-(1-\alpha)\cos\phi .
\]
Hence, for any fixed $U_0>0$,
\[
-F_\alpha(u)\ge (1-\alpha)\cos\theta_{*}
\ge \frac{\cos\theta_{*}}{1+U_0^2}(1-\alpha)(1+u^2),
\qquad |u|\le U_0 .
\]
It remains to prove a quadratic lower bound for large $|u|$.  We take
$U_0=12$ and first consider $u\ge U_0$; the case $u\le -U_0$ follows from the
identity $F_\alpha(-u;\phi)=F_\alpha(u;-\phi)$.

Use the same polar representation as in \cref{sec::global_maximum},
$w(u)=\varrho(u)e^{i\delta(u)}$, where $\delta(u)\in(0,\pi)$, and set
$M(u)=\delta(u)+\phi$.  Since $1/2\le\alpha<1$, one has
$1/6<\eta\le1/4$.  For $u\ge U_0$,
\[
1-\eta u^2\le 1-\frac{u^2}{6}<0,\qquad
\delta(u)=\pi-\arctan\frac{u}{\eta u^2-1}
\ge \delta_0:=\pi-\arctan\frac{U_0}{U_0^2/6-1}.
\]
Here the last inequality follows from $\eta\ge1/6$ and the monotonicity of
$\delta(u)$ for $u>0$ in Lemma~\ref{lem:delta-increasing}.
Thus $M(u)\in[\delta_0-\theta_{*},\pi+\theta_{*}]$ and
\[
-\cos M(u)\ge c_M:=-\cos(\delta_0-\theta_{*})>0 .
\]
Indeed,
\[
\delta_0-\theta_{*}
=\pi-\left(\arctan\frac{12}{23}+\arctan\frac{1}{9}\right)
\in\left(\frac{\pi}{2},\pi\right),
\]
because $(12/23)(1/9)<1$. Hence $c_M>0$. For these fixed constants, a direct
calculation gives $c_M>0.83$ and $\pi\sin\theta_{*}<0.35$.

Using
$\varrho(u)^{-(1-\alpha)}=e^{-(1-\alpha)\log\varrho(u)}$, we rewrite the phase as
\begin{equation}
\label{eq::minus_F_polar}
\begin{aligned}
-F_\alpha(u)
&=\varrho(u)^\alpha\cos(M(u)-(1-\alpha)\delta(u))
-\alpha\varrho(u)\cos M(u)\\
&=\varrho(u)\Big[
\big(\alpha-\varrho(u)^{-(1-\alpha)}
\cos((1-\alpha)\delta(u))\big)(-\cos M(u))\\
&\hspace{3.8cm}
+\varrho(u)^{-(1-\alpha)}\sin((1-\alpha)\delta(u))\sin M(u)
\Big].
\end{aligned}
\end{equation}
Moreover, for $u\ge U_0$,
\[
\varrho(u)\ge \eta u^2-1\ge \frac{u^2}{7},\qquad \log\varrho(u)\ge 3 .
\]
The first coefficient in \cref{eq::minus_F_polar} satisfies
\[
\alpha-\varrho(u)^{-(1-\alpha)}\cos((1-\alpha)\delta(u))
\ge \alpha-e^{-3(1-\alpha)}\ge \frac{1-\alpha}{2}.
\]
The last inequality is the scalar bound
$1-s-e^{-3s}\ge s/2$ for $0\le s\le1/2$, applied with $s=1-\alpha$.

If $\sin M(u)\ge0$, the second term in \cref{eq::minus_F_polar} is nonnegative,
and therefore
\[
-F_\alpha(u)\ge \frac{c_M}{2}\,(1-\alpha)\,\varrho(u) .
\]
If $\sin M(u)<0$, then necessarily $M(u)\in(\pi,\pi+\theta_{*}]$, so
$|\sin M(u)|\le\sin\theta_{*}$.  Since
$0\le \sin((1-\alpha)\delta(u))\le (1-\alpha)\delta(u)\le (1-\alpha)\pi$,
\cref{eq::minus_F_polar} gives
\[
-F_\alpha(u)\ge (1-\alpha)\varrho(u)
\left(\frac{c_M}{2}-\pi\sin\theta_{*}\right).
\]
The fixed constant in parentheses is positive by the explicit bounds above.
Combining the two cases with $\varrho(u)\ge u^2/7$ gives
$-F_\alpha(u)\ge c(1-\alpha)u^2$ for $u\ge U_0$.  Together with the compact
estimate, this proves \cref{eq::F_coercive_high_alpha}.
\end{proof}

\begin{theorem}[Integral bound of $I_\alpha^{1}$]\label{thm:L1-rho}
There is a universal constant $C$ such that
\begin{equation}
\label{eq:rho-int-bound}
I_\alpha^1=\int_0^1|\rho_\alpha(re^{i\theta})|\,\mathrm{d}r
\le
\begin{cases}
C/\alpha, & 0<\alpha\le 1/2,\\[2mm]
C\log\!\bigl(e/(1-\alpha)\bigr), & 1/2\le\alpha<1,
\end{cases}
\end{equation}
where $|\theta|=(1-\alpha)\theta_{*}$.
\end{theorem}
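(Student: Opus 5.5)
The plan is to avoid the pointwise bound of Corollary~\ref{coro::theta_max}. Integrated over $r$, that bound picks up a factor $\widetilde R(\alpha)^2$, which for small $\alpha$ behaves like $\alpha^{-2}$ rather than $\alpha^{-1}$. Instead, start from the contour bound \cref{eq::rho_bound} and integrate in $r$ first, then in $u$ (Tonelli). On the contour $|\theta|=(1-\alpha)\theta_*$ the phase $F(u)$ depends only on $\alpha$ and $\phi$, not on $r$. All the $r$-dependence sits in $\sigma=(\alpha/r)^{1/(1-\alpha)}$.

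\textbf{Step 1: the $r$-integral in closed form.} Substitute $v=\sigma^\alpha$. Then $r=\alpha\sigma^{-(1-\alpha)}$, and a direct computation gives $\sigma\,\mathrm dr=-(1-\alpha)\,\mathrm dv$. The range $r\in(0,1)$ becomes $v\in(v_0,\infty)$ with $v_0=\alpha^{\alpha/(1-\alpha)}\in[\tfrac12,1)$ for all $\alpha\in(0,1)$. Since $F(u)\le F(0)<0$ by Theorem~\ref{thm:Fprime-negative},
\[
I_\alpha^1\le\frac{1-\alpha}{2\pi}\int_{\mathbb R}(1+2\eta|u|)\int_{v_0}^\infty e^{vF(u)}\,\mathrm dv\,\mathrm du
=\frac{1-\alpha}{2\pi}\int_{\mathbb R}(1+2\eta|u|)\frac{e^{-v_0|F(u)|}}{|F(u)|}\,\mathrm du .
\]
Everything therefore reduces to a lower bound on $|F(u)|$, with the factor $e^{-|F|/2}$ available to cut off the logarithmic divergence that $|u|/|F|$ would otherwise produce.

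\textbf{Step 2: the case $1/2\le\alpha<1$.} Apply Lemma~\ref{lem::F_coercive_high_alpha}, which gives $|F|\ge c_F(1-\alpha)(1+u^2)$. The integrand is then at most
\[
\frac{C(1+|u|)}{1+u^2}\,e^{-c(1-\alpha)u^2}.
\]
The constant part is integrable with an $O(1)$ bound. The part $\int |u|(1+u^2)^{-1}e^{-c(1-\alpha)u^2}\,\mathrm du$ equals an exponential-integral-type quantity of order $\log(1/(1-\alpha))+O(1)$. Together these give $C\log(e/(1-\alpha))$.

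\textbf{Step 3: the case $0<\alpha\le1/2$.} Two lower bounds on $|F|$ are used.
\begin{itemize}
\item Globally, $|F(u)|\ge|F(0)|=(1-\alpha)\cos\phi\ge\tfrac12\cos\theta_*$.
\item For $|u|\ge R_0$, repeat the argument of Theorem~\ref{thm:rho-corrected-fixed}: from \cref{eq::F_bound}, $|F(u)|\ge c\,\alpha u^2$, as long as $\alpha\eta u^2$ dominates $\alpha|\sin\phi||u|+|u|^\alpha+\eta^\alpha|u|^{2\alpha}+1$.
\end{itemize}
The dominant requirement is $u^{2-2\alpha}\gtrsim1/\alpha$, so one can take $R_0=(C/\alpha)^{1/(2-2\alpha)}$. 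Since $\alpha^{-\alpha/(1-\alpha)}\le2$, this gives $R_0^2\le C/\alpha$. The two regions then contribute as follows.
\begin{itemize}
\item On $|u|\le R_0$, the integrand is $O(1+|u|)$, giving $O(R_0^2)=O(1/\alpha)$.
\item On $|u|\ge R_0$, the integrand is at most $C|u|\,e^{-c\alpha u^2}/(\alpha u^2)$. This integrates to $(C/\alpha)E_1(c\alpha R_0^2)$, and $E_1(c\alpha R_0^2)=O(1)$ because $\alpha R_0^2\gtrsim1$.
\end{itemize}
Hence $I_\alpha^1\le C/\alpha$.

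\textbf{Expected obstacle.} The delicate point is Step 3: the threshold $R_0$ must be chosen so that $R_0^2=O(1/\alpha)$, not the $\alpha^{-2}$ implied by the third entry of \cref{eq::R_def}. This will be done by re-deriving the domination condition directly rather than quoting $R$. The term $\eta^\alpha|u|^{2\alpha}$ forces the exponent $1/(2-2\alpha)$, and the observation $\alpha^{-\alpha/(1-\alpha)}\le2$ is what keeps the constant universal. A secondary check is that the interchange of integrals and the $v_0\ge1/2$ cutoff are legitimate uniformly in $\alpha$.
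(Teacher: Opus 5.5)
Your proof is correct. For $1/2\le\alpha<1$ it is the paper's argument: the substitution $\lambda=\sigma^\alpha$ with $\sigma\,\mathrm{d}r=-(1-\alpha)\,\mathrm{d}\lambda$, then Tonelli, Lemma~\ref{lem::F_coercive_high_alpha}, and the small-argument bound on $E_1$.

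For $0<\alpha\le1/2$ the paper does not interchange integrals. It integrates the pointwise bound of Corollary~\ref{coro::theta_max} over $r\in(0,1)$. The $\sigma e^{-\Lambda}$ factor integrates to $O(1)$, which multiplies $\widetilde R+\widetilde R^2$. The $r^{-1}e^{-\Lambda}$ tail term is $O(1/\alpha)$ after the substitution $t\propto r^{-\alpha/(1-\alpha)}$.

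Your stated reason for avoiding this route is mistaken. The nonconstant entries of $\widetilde R(\alpha)$ have exponents $1/(2-\alpha)$ and $1/(2(1-\alpha))$, both tending to $1/2$ as $\alpha\to0^+$. The leftover factors such as $(c/\alpha)^{\alpha/(1-\alpha)}$ stay bounded on $(0,1/2]$. Hence $\widetilde R\le C\alpha^{-1/2}$ and $\widetilde R^2=O(1/\alpha)$, not $O(\alpha^{-2})$. Likewise the third entry of \cref{eq::R_def} is $O(\alpha^{-1/2})$; it is essentially your $R_0$. So the ``expected obstacle'' is not an obstacle, and the paper's direct route already yields $C/\alpha$.

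Your alternative is still valid. You run the same Tonelli reduction for all $\alpha$, and supply two lower bounds: $-F(u)\ge(1-\alpha)\cos\phi\ge\tfrac12\cos\theta_*$ for all $u$, and $-F(u)\ge c\alpha u^2$ for $|u|\ge R_0$. The domination conditions from \cref{eq::F_bound} do give $R_0^2\le C/\alpha$ and $\alpha R_0^2\gtrsim1$, so both regions contribute $O(1/\alpha)$.

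What your route buys is a single mechanism for both ranges. It needs only a coercivity estimate on the phase, not the full pointwise bound of Theorem~\ref{thm:rho-corrected-fixed}. It also never produces the $1/r$ tail term, because the Gaussian tail in $u$ is handled after integrating in $r$. The paper's route is shorter because it reuses an already-proved corollary.

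One minor slip: $v_0=\alpha^{\alpha/(1-\alpha)}$ is decreasing in $\alpha$ with limit $e^{-1}$ as $\alpha\to1^-$. So $v_0\ge1/2$ holds only for $\alpha\le1/2$. In Step~2 you should use $v_0\ge e^{-1}$, as the paper does; this changes nothing else.
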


\begin{proof}
For $0<\alpha\le1/2$, Corollary~\ref{coro::theta_max} implies the compact
pointwise bound \cref{eq:rho-bound}.  In this range, the factors in
$\widetilde R(\alpha)$ are uniformly bounded by $C\alpha^{-1/2}$, and
$\alpha^{-1/(1-\alpha)}\le C/\alpha$, while $e^{-B}\le1$.  Integrating
\cref{eq:rho-bound} over $r\in(0,1)$, using
\[
t=(1-\alpha)\cos\theta_{*}\,
\alpha^{\alpha/(1-\alpha)}r^{-\alpha/(1-\alpha)}
\]
for the exponentially decaying factor, gives
\[
I_\alpha^1\le C\left(\widetilde R+\widetilde R^2+
\alpha^{-1/(1-\alpha)}\Big(\frac1{\widetilde R}+1\Big)\right)
\le \frac{C}{\alpha}.
\]

It remains to treat $1/2\le\alpha<1$.  Starting from the saddle-contour bound
\cref{eq::rho_bound}, let
\[
\lambda=\sigma^\alpha=\alpha^{\alpha/(1-\alpha)}
r^{-\alpha/(1-\alpha)},\qquad
\lambda_0=\alpha^{\alpha/(1-\alpha)} .
\]
Then $\sigma\,\mathrm{d}r=-(1-\alpha)\,\mathrm{d}\lambda$, and
$\lambda_0\ge e^{-1}$ for $1/2\le\alpha<1$.  Since the integrand is
nonnegative, the Fubini--Tonelli theorem and $F_\alpha(u)<0$ give
\begin{equation}
\label{eq::I1_direct_integrated}
\begin{aligned}
I_\alpha^1
&\le \frac{1-\alpha}{2\pi}
\int_{-\infty}^{\infty}(1+2\eta|u|)
\int_{\lambda_0}^{\infty}e^{\lambda F_\alpha(u)}\,\mathrm{d}\lambda\,\mathrm{d}u\\
&= \frac{1-\alpha}{2\pi}
\int_{-\infty}^{\infty}(1+2\eta|u|)
\frac{e^{\lambda_0 F_\alpha(u)}}{-F_\alpha(u)}\,\mathrm{d}u .
\end{aligned}
\end{equation}
Using Lemma~\ref{lem::F_coercive_high_alpha}, $\eta\le1/3$, and
$\lambda_0\ge e^{-1}$, we obtain
\[
I_\alpha^1\le C\int_{-\infty}^{\infty}
\frac{1+|u|}{1+u^2}
\exp\{-c(1-\alpha)(1+u^2)\}\,\mathrm{d}u .
\]
The contribution from $|u|\le1$ is bounded by an absolute constant.  For
$|u|\ge1$,
\[
\int_1^\infty \frac{e^{-c(1-\alpha)u^2}}{u}\,\mathrm{d}u
=\frac12 E_1(c(1-\alpha))
\le C\log\!\left(\frac{e}{1-\alpha}\right),
\]
where $E_1(x)=\int_x^\infty e^{-t}t^{-1}\,\mathrm{d}t$ and the last inequality
is the standard small-$x$ bound on $E_1$.  This proves
\cref{eq:rho-int-bound}.
\end{proof}

For the infinite part $I_\alpha^{2}$, we shall use the asymptotic properties of $\rho_\alpha(z)$ as $|z|\ge 1$. Indeed, we have the following estimate on $|\rho_{\alpha}(z)|$ when $|z|>1$, as shown in Lemma~\ref{lemma::rho_alpha_asymp}.
\begin{lemma} 
\label{lemma::rho_alpha_asymp}
  For any complex number $z$ such that $|z| \ge 1$ and
  $|\arg(z)| < \frac{\pi}{2}(1-\alpha)$, the following inequality holds:
  \be 
  |\rho_\alpha(z)| \le \frac{C_\alpha}{|z|^{\alpha+1}},
  \label{eq:rhozbound1}
  \ee
  where the constant $C_\alpha$ is bounded by an explicit function of $\alpha$ that
  \be 
  C_\alpha \le \frac{1}{\pi} \left( \Gamma(\alpha+1) + \frac{1}{2^{1-\alpha}-1} \right). 
  \ee
\end{lemma}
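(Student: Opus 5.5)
The plan is to extend the large-$t$ series \cref{eq:seriesexpansion} of Lemma~\ref{lemma::rho_alpha} (property~2) into the complex sector and bound it term by term. The key observation is that the coefficients $\Gamma(n\alpha+1)/n!$ decay geometrically. With that, the series converges absolutely for every $z\neq0$ on the principal branch of $z^{-(n\alpha+1)}$, and the sum is analytic on $\mathbb{C}\setminus(-\infty,0]$.

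\emph{Step 1: coefficient bound.} For fixed $n\ge1$, consider $g(a)=\log\Gamma(na+1)$ on $a\in[0,1]$. It is convex because $\log\Gamma$ is convex on $(0,\infty)$ and $a\mapsto na+1$ is affine. Since $g(0)=0$ and $g(1)=\log n!$, convexity gives $g(\alpha)\le\alpha\log n!$. Hence
\[
\frac{\Gamma(n\alpha+1)}{n!}\le (n!)^{-(1-\alpha)}\le 2^{-(n-1)(1-\alpha)},
\]
where the last step uses $n!\ge2^{n-1}$. This is the only real estimate in the argument.

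\emph{Step 2: identification.} The continued density $\rho_\alpha(z)$, obtained from the Laplace-inversion representation \cref{eq::ILT}, is analytic in the sector $|\arg z|<\frac{\pi}{2}(1-\alpha)$. By Step 1 the series is absolutely and locally uniformly convergent on $\mathbb{C}\setminus(-\infty,0]$, so it is analytic there. The two functions agree on the real ray $t>1$ by Lemma~\ref{lemma::rho_alpha}. The region $\{|z|>1,\ |\arg z|<\frac{\pi}{2}(1-\alpha)\}$ is connected and contains that ray, so by the identity theorem the series represents $\rho_\alpha(z)$ there. The boundary $|z|=1$ then follows by continuity. I expect this step to need the most care. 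In particular, one must check that the continuation used elsewhere in the paper is the same branch, which holds since both are real-analytic continuations of $\rho_\alpha$ from the positive axis.

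\emph{Step 3: summation.} For $|z|\ge1$ we have $|z^{-(n\alpha+1)}|=|z|^{-(1+\alpha)}|z|^{-(n-1)\alpha}\le|z|^{-(1+\alpha)}$, and $|\sin(\pi n\alpha)|\le1$. Keeping the $n=1$ term as $\Gamma(\alpha+1)$ and applying Step 1 for $n\ge2$ gives
\[
|\rho_\alpha(z)|\le\frac{|z|^{-(1+\alpha)}}{\pi}\Big(\Gamma(\alpha+1)+\sum_{k\ge1}2^{-k(1-\alpha)}\Big)
=\frac{1}{\pi}\Big(\Gamma(\alpha+1)+\frac{1}{2^{1-\alpha}-1}\Big)|z|^{-(1+\alpha)}.
\]
This is \cref{eq:rhozbound1} with the stated bound on $C_\alpha$.
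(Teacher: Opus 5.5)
Your proposal is correct and follows the paper's proof step for step: the termwise triangle inequality on the series with $|\sin(\pi n\alpha)|\le 1$, the reduction $|z|^{-(n-1)\alpha}\le 1$ for $|z|\ge 1$, the log-convexity bound $\Gamma(n\alpha+1)\le (n!)^{\alpha}$, the estimate $n!\ge 2^{n-1}$, and the geometric sum. Your Step~2 adds something the paper only asserts, namely an identity-theorem argument that the series represents the continued $\rho_\alpha$ in the sector. One small slip: convergence for every $z\neq 0$ comes from the super-geometric bound $(n!)^{-(1-\alpha)}$, not from the geometric bound $2^{-(n-1)(1-\alpha)}$, which only gives convergence for $|z|^{\alpha}>2^{-(1-\alpha)}$; since only $|z|\ge 1$ is needed, this does not affect the argument.
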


\begin{proof} 
  The exact series representation for $\rho_\alpha(z)$ is given by
  \be 
  \rho_\alpha(z) = \frac{1}{\pi} \sum_{n=1}^{\infty} \frac{(-1)^{n-1}}{n!} \sin(\pi n \alpha) \Gamma(n\alpha+1) z^{-(n\alpha+1)}.
  \ee
  By applying the triangle inequality and the fact that $|\sin(\pi n \alpha)| \le 1$, we obtain a bound on the modulus that
  \be 
  |\rho_\alpha(z)| \le \frac{1}{\pi} \sum_{n=1}^{\infty} \frac{\Gamma(n\alpha+1)}{n!} |z|^{-(n\alpha+1)} 
  \ee
To analyze the behavior for $|z| \ge 1$, we factor out the dominant term $|z|^{-(\alpha+1)}$, providing that
  \be 
  |\rho_\alpha(z)| \le \frac{1}{|z|^{\alpha+1}} \left[ \frac{1}{\pi} \sum_{n=1}^{\infty} \frac{\Gamma(n\alpha+1)}{n!} |z|^{-(n-1)\alpha} \right].
  \ee
  Since $|z| \ge 1$ and $(n-1)\alpha > 0$ for $n \ge 2$, we have $|z|^{-(n-1)\alpha} \le 1$. We can therefore bound the series in the brackets by replacing $|z|$ with 1, such that
  \be 
  \frac{1}{\pi} \sum_{n=1}^{\infty} \frac{\Gamma(n\alpha+1)}{n!} |z|^{-(n-1)\alpha} \le \frac{1}{\pi} \sum_{n=1}^{\infty} \frac{\Gamma(n\alpha+1)}{n!}.
  \ee
  Let the constant on the right be $C_\alpha$. Our task is now to find an explicit bound for $C_\alpha$.
  We split the sum defining $C_\alpha$ at $n=1$ and derive
  \be 
  C_\alpha = \frac{1}{\pi} \left( \Gamma(\alpha+1) + \sum_{n=2}^{\infty} \frac{\Gamma(n\alpha+1)}{n!} \right).
  \ee
  Using the log-convexity of the Gamma function, we have the inequality $\Gamma(n\alpha+1) \le (n!)^\alpha$. For the factorial, we use the simple lower bound $n! \ge 2^{n-1}$ for $n \ge 2$. Applying these to the remainder sum gives
  \begin{equation}
  \begin{aligned}
  \sum_{n=2}^{\infty} \frac{\Gamma(n\alpha+1)}{n!}
  &\le \sum_{n=2}^{\infty} \frac{(n!)^\alpha}{n!}
  = \sum_{n=2}^{\infty} \frac{1}{(n!)^{1-\alpha}}\\
  &\le \sum_{n=2}^{\infty} \frac{1}{(2^{n-1})^{1-\alpha}}
  = \sum_{n=2}^{\infty} \left(\frac{1}{2^{1-\alpha}}\right)^{n-1}.
  \end{aligned}
  \end{equation}
  The final series is a geometric series with first term $a = 1/2^{1-\alpha}$ and ratio $r=1/2^{1-\alpha}$. Since $\alpha \in (0,1)$, the ratio is less than 1, and the series converges to $a/(1-r)$ such that
  \be 
  \sum_{n=2}^{\infty} \left(\frac{1}{2^{1-\alpha}}\right)^{n-1} = \frac{\frac{1}{2^{1-\alpha}}}{1 - \frac{1}{2^{1-\alpha}}} = \frac{1}{2^{1-\alpha}-1}.
  \ee
  Substituting this back gives the explicit bound for the constant
  \be 
  C_\alpha \le \frac{1}{\pi} \left( \Gamma(\alpha+1) + \frac{1}{2^{1-\alpha}-1} \right) 
  \ee
Combining the results from the previous steps, we arrive at the final inequality for $|z| \ge 1$,
  \be 
  |\rho_\alpha(z)| \le \frac{C_\alpha}{|z|^{\alpha+1}} \le \frac{1}{|z|^{\alpha+1}} \left[ \frac{1}{\pi} \left( \Gamma(\alpha+1) + \frac{1}{2^{1-\alpha}-1} \right) \right].
  \ee
\end{proof}

The following Corollary~\ref{coro::I_alpha_2} follows from integrating both sides of \eqref{eq:rhozbound1}, which provides the error bound of $I_{\alpha}^2$.
\begin{corollary}
\label{coro::I_alpha_2}
Suppose that $|\theta|<\frac{\pi}{2}(1-\alpha)$. Then
\begin{equation}
\label{eq::estimate_I_alpha_2}
I_{\alpha}^2=\int_1^{\infty}|\rho_{\alpha}(r e^{i\theta})|\,\mathrm{d}r
\le \frac{1}{\pi \alpha} \left( \Gamma(\alpha+1) + \frac{1}{2^{1-\alpha}-1} \right).
\end{equation}
\end{corollary}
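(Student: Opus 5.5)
The plan is to integrate the pointwise bound of Lemma~\ref{lemma::rho_alpha_asymp} along the ray $z=re^{i\theta}$, $r\in[1,\infty)$. First I will check that the ray lies in the region where that lemma applies. Here $|z|=r\ge1$ and $|\arg z|=|\theta|<\frac{\pi}{2}(1-\alpha)$ by hypothesis. The lemma therefore gives
\[
|\rho_\alpha(re^{i\theta})|\le \frac{C_\alpha}{r^{\alpha+1}},\qquad r\ge1,
\]
with $C_\alpha\le \frac1\pi\bigl(\Gamma(\alpha+1)+\frac{1}{2^{1-\alpha}-1}\bigr)$.

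The only point needing care is that the lemma's proof uses the convergent series for $\rho_\alpha(z)$ at complex $z$. I would justify this by observing that the series has infinite radius of convergence in $1/z$: its coefficients $\Gamma(n\alpha+1)/n!$ decay superexponentially because $\alpha<1$. Hence it defines an analytic function on the slit plane. That function agrees with $\rho_\alpha$ on $t>1$, so by the identity theorem it equals the analytic continuation of $\rho_\alpha$ on the whole sector in question. In particular, $|z^{-(n\alpha+1)}|=|z|^{-(n\alpha+1)}$ on the principal branch, and the lemma's bound holds as stated.

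Next I integrate:
\[
I_\alpha^2\le C_\alpha\int_1^\infty r^{-\alpha-1}\,\mathrm dr=\frac{C_\alpha}{\alpha}.
\]
Substituting the explicit bound on $C_\alpha$ yields \eqref{eq::estimate_I_alpha_2}.

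This step is essentially routine. The only substantive issue is the legitimacy of using the series off the real axis, and the entire-in-$1/z$ argument above settles it. As a final check, the choice $\theta=\pm(1-\alpha)\theta_*$ used in Theorem~\ref{thm::fractional_approx} satisfies the hypothesis, since $\theta_*<\pi/2$.
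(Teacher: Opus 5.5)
Your proposal is correct and follows the paper's route exactly: you apply the pointwise bound of Lemma~\ref{lemma::rho_alpha_asymp} on the ray $r\ge1$ and integrate $\int_1^\infty r^{-\alpha-1}\,\mathrm{d}r=1/\alpha$. Your extra justification fills in a point the paper leaves implicit. The series is entire in $z^{-\alpha}$, so it is the analytic continuation of $\rho_\alpha$ on the slit plane, and $|z^{-(n\alpha+1)}|=|z|^{-(n\alpha+1)}$ there.
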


Since $\Gamma(\alpha+1)$ is bounded on $(0,1)$ and
$2^{1-\alpha}-1\ge c(1-\alpha)$ for $0<\alpha<1$, \cref{eq::estimate_I_alpha_2}
implies
\[
I_\alpha^2\le C\left(\frac{1}{\alpha}+\frac{1}{1-\alpha}\right).
\]
Combining this estimate with Theorem~\ref{thm:L1-rho} proves
\cref{eq::I_alpha_upper}.

\begin{remark}
  \Cref{eq::estimate_I_alpha_2} shows singular behavior as $\alpha$ tends to
  $0$ or $1$. This is consistent with the limiting picture: as $\alpha\to1^-$
  the representing measure tends to a Dirac mass at $z=1$, while as
  $\alpha\to0^+$ the limiting Laplace transform is degenerate and is not
  represented by a regular probability density on $(0,\infty)$. Such singular
  limiting behavior is difficult to approximate using numerical methods, and the
  singularity in the error estimate reflects this phenomenon.

\end{remark}

\bibliographystyle{siamplain}
\bibliography{ref}

\end{document}